\documentclass[11pt,a4paper]{amsart}

\usepackage[utf8]{inputenc}
\usepackage{amsmath,amssymb,amsthm}
\usepackage{mathrsfs}
\usepackage{enumitem}
\usepackage[colorlinks=true,linkcolor=blue,citecolor=blue,urlcolor=blue,bookmarks=false]{hyperref}
\usepackage[margin=2.5cm]{geometry}

\newtheorem{theorem}{Theorem}[section]
\newtheorem{corollary}[theorem]{Corollary}
\newtheorem{lemma}[theorem]{Lemma}
\newtheorem{proposition}[theorem]{Proposition}

\theoremstyle{definition}
\newtheorem{definition}[theorem]{Definition}
\newtheorem{example}[theorem]{Example}
\newtheorem{remark}[theorem]{Remark}

\title{The quaternionic moment problem}

\author{R. Ben Taher}
\address{Department of Mathematics, Faculty of Sciences, Moulay Ismail University, Meknes, Morocco}
\email{r.bentaher@umi.ac.ma}

\author{K. Schm\"udgen}
\address{Department of Mathematics, University of Leipzig, Leipzig, Germany}
\email{Konrad.Schmuedgen@math.uni-leipzig.de}

\author{E.H. Zerouali}
\address{Department of Mathematics, Faculty of Sciences, Mohammed V University, Rabat, Morocco}
\email{elhassan.zerouali@fsr.um5.ac.ma}

\subjclass[2020]{44A60, 47A57 (primary); 14P10, 11R52 (secondary)}
\keywords{Quaternionic moment problem, Positivstellensatz, Adapted spaces, Quaternionic polynomials}

\begin{document}

\begin{abstract}
In this paper we develop an approach to the full quaternionic moment problem.
We define a hierarchy
$ \mathbb{H}^{k}[q^{*}, q]\subset \mathbb{H}^{k+1}[q^{*}, q]$,
$k\in \mathbb{N}_{0}\cup \{\infty \}$, of two-sided $\mathbb{H}$-linear
spaces of quaternionic polynomials which are invariant under conjugation
of quaternions and determine the hermitian parts of these spaces explicitly.
Using a generalization of Choquet's theorem on adapted spaces to quaternions
we provide necessary and sufficient solvability criteria for the quaternionic
moment problem of each space $ \mathbb{H}^{k}[q^{*}, q]$. The hermitian
part of $ \mathbb{H}^{\infty }[q^{*}, q]$ is the real polynomial algebra
$\mathbb{R}[x_{0}, x_{1}, x_{2}, x_{3}]$. This enables us to apply real
algebraic geometry (Positivstellens\"{a}tze) to the quaternionic moment problem
on $ \mathbb{H}^{\infty }[q^{*}, q]$.
\end{abstract}

\maketitle

\section{Introduction}
\label{sec1}

The moment problem, one of the central topics in analysis and probability
theory, seeks to determine when a given sequence of real numbers can be
realized as the moments of some positive measure. This fundamental question
was first formulated by the Dutch mathematician Th. J. Stieltjes in his
pioneering memoir \cite{Stieltjes1894}. Now there exists a well--developed
theory of moment problems which has deep connections with many other mathematical
disciplines such as spectral theory, complex analysis, orthogonal polynomials,
real algebraic geometry, optimization, and others. A comprehensive treatment
of the various forms of the classical moment problem is given in the monograph
\cite{Schmudgen2017}, see also \cite{Akhiezer1965}.

In the present paper we study the moment problem over quaternions. More
precisely, our aim is to develop fundamental notions and basic solvability
criteria for the quaternionic moment problem. To the best of our knowledge,
a systematic study of quaternionic moment problems has not yet been explored
in the existing literature.

The algebra $\mathbb{H}$ of quaternions, discovered by Hamilton in 1843
\cite{Hamilton1843}, is a four-dimensional associative non-commutative
division algebra over the real numbers. Quaternions have been investigated
and applied in several fields of modern mathematics and physics. As examples,
we mention the study of operators in Hilbert spaces over quaternions
\cite{CGK}, \cite{vasi22} and the analysis of quaternionic functions
\cite{GSS}. This motivates the development of a quaternionic moment theory.

Moment problems in modern functional analytic language deal with the representation
of linear functionals on polynomials as integrals with respect to positive
measures. In the case of quaternions a number of new difficulties and challenges
occur. An obvious difficulty is the non-commutativity of quaternions. A
more important challenge is the fact that important classes of quaternionic
polynomials and their hermitian parts are not invariant under multiplication,
so they do not form algebras. In particular, since the hermitian parts
are not algebras, in contrast to the classical case methods of real algebraic
geometry do not apply directly. Our approach might open new perspectives
for future research in non-commutative moment theory and real algebraic
geometry.

Our starting point to the quaternionic moment problem is a chain of spaces
of quaternionic polynomials:
\begin{align}
\mathbb{H}^{0}[q^{*}, q]\subset \cdots \subset \mathbb{H}^{k}[q^{*}, q]
\subset \mathbb{H}^{k+1}[q^{*}, q]\subset \mathbb{H}^{\infty}[q^{*}, q].
\label{eq1}
\end{align}
These are two-sided $\mathbb{H}$-linear vector spaces (or equivalently,
$\mathbb{H}$-bimodules) which are invariant under the conjugation
\begin{equation*}
q = x_{0} + x_{1} \mathrm{i}+ x_{2} \mathrm{j}+ x_{3} \mathrm{k}\to q^{*}:=x_{0}-x_{1}
\mathrm{i}-x_{2}\mathrm{j}-x_{3}\mathrm{k},
\end{equation*}
where $x_{0}, x_{1}, x_{2}, x_{3} \in \mathbb{R}$ and
$\mathrm{i}, \mathrm{j}$, $\mathrm{k}$ denote the fundamental units of
$\mathbb{H}$.

The elements of the space $\mathbb{H}^{0}[q^{*}, q]$ of basic polynomials
are finite sums of terms $a(q^{*})^{m}q^{n}b$, where
$a,b \in \mathbb{H}$ and $m,n\in \mathbb{N}_{0}$, and the elements of
$ \mathbb{H}^{k+1}[q^{*}, q]$ are sums of products $f\cdot g$, with
$f\in \mathbb{H}^{k}[q^{*}, q]$ and $g\in \mathbb{H}^{0}[q^{*}, q]$.

Then we prove that the hermitian parts of these $*$-invariant two-sided
$\mathbb{H}$-linear vector spaces are
\begin{align}
\label{hhherkpart}
\mathbb{H}^{k}[q^{*},q]_{\mathrm{her}}=\sum _{\alpha _{1},\alpha _{2},
\alpha _{3}\in \mathbb{N}_{0}, \, \alpha _{1}+\alpha _{2}+\alpha _{3}
\leq k+1} x_{1}^{\alpha _{1}}x_{2}^{\alpha _{2}}x_{3}^{\alpha _{3}}\,
\, \mathbb{R}[x_{0},x_{1}^{2}+x_{2}^{2}+x_{3}^{3}]
\end{align}
for $k\in \mathbb{N}_{0}$ and
\begin{align}
\label{hinftyxherpart}
\mathbb{H}^{\infty}[q^{*},q]_{\mathrm{her}}=\mathbb{R}[x_{0},x_{1},x_{2},x_{3}].
\end{align}

Our approach to the quaternionic moment problem is based on a generalization
of Choquet's theorem on adapted spaces to the case of quaternions. This
result will be then applied to the polynomial spaces
$ \mathbb{H}^{k}[q^{*}, q]$.

For the solvability theory of the classical multi-dimensional moment problem
there exist powerful methods based on Archimedean Positivstellens\"atze
of real algebraic geometry (see \cite[Chapter 13]{Schmudgen2017}). These
results require a finitely generated real algebra. By (\ref{hinftyxherpart})
the hermitian part of the full polynomial space
$ \mathbb{H}^{\infty}[q^{*}, q]$ is the polynomial algebra
$\mathbb{R}[x_{0},x_{1},x_{2},x_{3}]$. Then Archimedean Positivstellens\"atze
can be applied in this case and a theory similar to the real multi-dimensional
moment problem can be developed. However, if $k\in \mathbb{N}_{0}$, then
the hermitian part of $ \mathbb{H}^{k}[q^{*}, q]$, as described by (\ref{hhherkpart}),
is neither a real algebra nor a quadratic module or a semiring for some
reasonable real algebra. It is an interesting challenge and open problem
in real algebraic geometry whether there exist counter-parts of the Archimedean
Positivstellens\"atze for the real vector spaces (\ref{hhherkpart}) of
polynomials. This is of particular interest in the case of the space
$ \mathbb{H}^{0}[q^{*}, q]$ of basic polynomials.

Let us briefly explain the structure of this paper and its main results.

In Section~\ref{secadapted}, we generalize Choquet's concept of adapted spaces
to quaternions and prove the counter-part of Choquet's theorem for quaternionic
moment functionals (Theorem~\ref{choquet}). As a byproduct we give a quaternionic
version of the Richter-Tchakaloff theorem of the existence of atomic representing
measures for moment functionals on finite dimensional spaces (Corollary~\ref{richter}).

In Section~\ref{polynomialspaces}, we define the quaternionic polynomial
spaces $ \mathbb{H}^{k}[q^{*}, q]$, where
$k\in \mathbb{N}_{0}\cup \{\infty \}$, and study them in detail. The main
results are the explicit descriptions of their hermitian parts given by
the formulas (\ref{hhherkpart}) and (\ref{hinftyxherpart}) above (Theorems~\ref{herhk} and \ref{herinfty}). Further, we show that these polynomial
spaces are adapted spaces in the sense of Choquet (Proposition~\ref{adaptedH_L}).

In Section~\ref{mpbasic}, we treat the moment problem for the space
$ \mathbb{H}^{0}[q^{*}, q]$ of basic polynomials (Theorem~\ref{qmp1}) and
derive a solvability criterion for a class of specific semi-algebraic sets
(Theorem~\ref{mpK_h}).

In Section~\ref{mphk} we investigate the moment problem for the general
polynomial spaces $ \mathbb{H}^{k}[q^{*}, q]$,
$k\in \mathbb{N}_{0}\cup \{\infty \}$, and derive a quaternionic version
of Haviland's theorem (Theorem~\ref{thm:quaternion_haviland}).

Section~\ref{mpinfty} deals with the moment problem for the full polynomial
space $ \mathbb{H}^{\infty}[q^{*}, q]$. As noted above, in this case the
hermitian part is the polynomial algebra
$\mathbb{R}[x_{0},x_{1},x_{2},x_{3}]$ and Archimedean Positivstellens\"atze
can be applied. We carry out this for the Positivstellensatz obtained in
\cite{Schmudgen1991} and formulate its counter-part (Theorem~\ref{mpppreordering}).

In Section~\ref{prelim} we collect basic facts on the quaternions and fix
some notation which will be used throughout this paper.

\section{Preliminaries on quaternions}
\label{prelim}

The algebra of quaternions is denoted by $\mathbb{H}$. Each quaternion
$q \in \mathbb{H}$ can be uniquely written as
\begin{equation*}
q = x_{0} + x_{1} \mathrm{i}+ x_{2} \mathrm{j}+ x_{3} \mathrm{k},
\end{equation*}
where $x_{0}, x_{1}, x_{2}, x_{3} \in \mathbb{R}$ and
$\mathrm{i}, \mathrm{j}$, $\mathrm{k}$ are the fundamental units of
$\mathbb{H}$ satisfying
\begin{align}
\label{qxx}
\mathrm{i}^{2}= \mathrm{j}^{2}=\mathrm{k}^{2}=-1,~~ \mathrm{i}
\mathrm{j}=-\mathrm{j}\mathrm{i}=\mathrm{k}, \mathrm{j}\mathrm{k}=-
\mathrm{k}\mathrm{j}=\mathrm{i}, \mathrm{k}\mathrm{i}=-\mathrm{i}
\mathrm{k}=\mathrm{j}.
\end{align}
We will identify the quaternion $q\in \mathbb{H}$ and the element
$(x_{0},x_{1},x_{2},x_{2})\in \mathbb{R}^{4}$. Then
$\mathbb{H}\cong \mathbb{R}^{4}$ becomes a locally compact space.

For
$q = x_{0} + x_{1} \mathrm{i}+ x_{2} \mathrm{j}+ x_{3} \mathrm{k}$, we
define its conjugate by
\begin{align}
\label{qstar}
q^{*}:=x_{0}-x_{1}\mathrm{i}-x_{2}\mathrm{j}-x_{3}\mathrm{k}
\end{align}
and its norm by
\begin{equation*}
\|q\|=\sqrt{q^{*}q}=\sqrt{x_{0}^{2}+x_{1}^{2}+x_{2}^{2}+x_{3}^{2}}.
\end{equation*}
For each $q\in \mathbb{H}$ we have the conjugation formula
\begin{align}
\label{conjugationformula}
q^{*}=-\frac{1}{2}(q+\mathrm{i}q \mathrm{i}+\mathrm{j}q\mathrm{j}+
\mathrm{k}q\mathrm{k}).
\end{align}
From (\ref{qxx}) and (\ref{qstar}) we obtain
\begin{equation}
\label{coordinates}
x_{0} = \frac{q + q^{*}}{2},~~ x_{1} =
\frac{q^{*}\mathrm{i}- \mathrm{i}q}{2},~~ x_{2} =
\frac{q^{*}\mathrm{j}-\mathrm{j}q}{2},~~ x_{3} =
\frac{q^{*}\mathrm{k}- \mathrm{k}q}{2}.
\end{equation}
For more details on quaternion algebra and linear algebra, see
\cite{rodman2014}.

Let us consider the set $C(\mathbb{H}; \mathbb{H})$ of $\mathbb{H}$-valued
continuous functions on $\mathbb{H}$. Clearly,
$C(\mathbb{H};\mathbb{H})$ is a unital algebra over the quaternions with
pointwise multiplication. The map $f\to f^{*}$ of
$C(\mathbb{H}; \mathbb{H})$ into itself, defined by
\begin{equation}
\label{defstar}
f^{*}(q):= f(q)^{*}\quad \text{for}~~ q\in \mathbb{H},
\end{equation}
satisfies
\begin{equation}
\label{invprop}
(af+bg)^{*}=f^{*}a^{*}+g^{*}b^{*},\quad (fg)^{*}=g^{*}f^{*}, \quad (f^{*})^{*}=f
\end{equation}
for $f,g\in C(\mathbb{H}; \mathbb{H})$ and $a,b\in \mathbb{H}$. That is,
the map $f\to f^{*}$ is an involution satisfying the compatibility conditions
$(fg)^{*}=g^{*}f^{*}$ with the multiplication. We call such a structure
on $C(\mathbb{H}; \mathbb{H})$ a $*$-algebra over the quaternions or briefly
a quaternionic $*$-algebra. In particular, if we consider
$C(\mathbb{H}; \mathbb{H})$ only as an algebra over the reals, then
$C(\mathbb{H}; \mathbb{H})$ is a real $*$-algebra (see
\cite[Definition 2.1]{Schmudgen2020}). The definition (\ref{defstar}) and
the properties (\ref{invprop}) will be used in the sequel without mention.

Suppose that $E$ is a two-sided $\mathbb{H}$-linear subspace of
$C(\mathbb{H};\mathbb{H})$, that is,
\begin{equation*}
afb+ cgd\in E\quad \text{for all} \quad f,g\in E , ~a,b,d,c\in
\mathbb{H}.
\end{equation*}
We say that $E$ is $*$-invariant if $f\in E$ implies that
$f^{*}\in E$.

Suppose now that $E$ is a $*$-invariant real linear subspace of
$C(\mathbb{H};\mathbb{H})$. We define the \emph{hermitian part} of
$E$ by
\begin{equation*}
E_{\mathrm{her}}:= \{ f\in E: f=f^{*}\}.
\end{equation*}
Further, for $f\in E$ we have
\begin{equation*}
f=\mathrm{Re}f+ \mathrm{Im}f,
\end{equation*}
where
\begin{align*}
\mathrm{Re}f:=\frac{f+f^{*}}{2}\quad \textrm{and} \quad \mathrm{Im}f:=
\frac{f-f^{*}}{2}.
\end{align*}
Clearly, $f\in E_{\mathrm{her}}$ if and only if $f=\mathrm{Re}f$ for each
$f\in E$. Hence
\begin{align}
\label{EherRe}
E_{\mathrm{her}}=\{\, \mathrm{Re}f: \, f\in E\}.
\end{align}
From the definition (\ref{defstar}) it follows at once that
$ E_{\mathrm{her}}$ is just the set of real-valued functions on
$\mathbb{H}$:
\begin{equation}
\label{chareher}
E_{\mathrm{her}}= \{ f\in E: f(q)\in \mathbb{R}\quad \text{for}~~ q
\in \mathbb{H}\, \}.
\end{equation}
Clearly, (\ref{chareher}) implies that $E_{\mathrm{her}}$ is an
$\mathbb{R}$-linear subspace contained in $C(\mathbb{H};\mathbb{R})$.

Note that the preceding definitions and facts apply in particular to each
$*$-invariant two-sided $\mathbb{H}$-linear subspace $E$ of
$C(\mathbb{H};\mathbb{H})$.
\begin{lemma}%
\label{commutative}
If $E$ is a $*$-subalgebra of $C(\mathbb{H};\mathbb{H})$, then
$E_{\mathrm{her}}$ is a commutative real subalgebra of
$C(\mathbb{H}; \mathbb{R})$.
\end{lemma}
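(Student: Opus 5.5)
The plan is to reduce everything to the characterization (\ref{chareher}): $E_{\mathrm{her}}$ consists exactly of those $f\in E$ taking real values at every point of $\mathbb{H}$. Since $E$ is a $*$-subalgebra, it is closed under pointwise products and under the involution. It is also a real linear subspace, since the statement presupposes the setting where $E_{\mathrm{her}}$ is defined, namely a $*$-invariant real linear subspace.

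First I will check that $E_{\mathrm{her}}$ is a real linear subspace of $C(\mathbb{H};\mathbb{R})$. This was already noted after (\ref{chareher}): for $f,g\in E_{\mathrm{her}}$ and $\lambda,\mu\in\mathbb{R}$, the function $\lambda f+\mu g$ lies in $E$ and is pointwise real. If $E$ is unital, the constant $1$ satisfies $1^{*}=1$, so $1\in E_{\mathrm{her}}$. Otherwise "subalgebra" is meant without unit, and nothing needs to be checked here.

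Next comes closure under multiplication. For $f,g\in E_{\mathrm{her}}$ we have $fg\in E$, because $E$ is a subalgebra. For each $q\in\mathbb{H}$, the value $(fg)(q)=f(q)g(q)$ is a product of two real numbers and hence real. By (\ref{chareher}), $fg\in E_{\mathrm{her}}$. Alternatively, one can argue algebraically: $(fg)^{*}=g^{*}f^{*}=gf$, and $gf=fg$ by the commutativity established in the next step.

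For commutativity, note that $f(q)$ and $g(q)$ are real numbers, hence lie in the center of $\mathbb{H}$. Therefore $f(q)g(q)=g(q)f(q)$ for all $q$, so $fg=gf$ pointwise. Together these steps show that $E_{\mathrm{her}}$ is a commutative real subalgebra of $C(\mathbb{H};\mathbb{R})$.

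There is no real obstacle here. The only point requiring care is that non-commutativity of $\mathbb{H}$ could in principle spoil closure under products, since $(fg)^{*}=g^{*}f^{*}=gf$ need not equal $fg$ in general. This is resolved exactly by the pointwise reality in (\ref{chareher}), which places all values in the center $\mathbb{R}$ of $\mathbb{H}$.
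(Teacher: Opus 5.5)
Your proposal is correct and takes the same approach as the paper. Both use the characterization (\ref{chareher}) of hermitian elements as the real-valued ones. Then $fg\in E$ is real-valued, hence lies in $E_{\mathrm{her}}$, and $f(q)g(q)=g(q)f(q)$ pointwise gives $fg=gf$.
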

\begin{proof}
Let $f,g \in E_{\mathrm{her}}$. Since $f$ and $g$ are real-valued by (\ref{chareher}),
we have
\begin{align}
\label{fgqq}
(fg)(q)=f(q)g(q)=g(q)f(q)=(gf)(q)\quad \text{for all}~~ q\in
\mathbb{H}.
\end{align}
Since $E$ is an algebra, $fg\in E$. From (\ref{fgqq}) we conclude that
$fg$ is real-valued, hence $fg\in E_{\mathrm{her}}$, and that the mappings
$fg$ and $gf$ of $C(\mathbb{H}, \mathbb{H})$ coincide. Thus $fg=gf$. As
noted above, $E_{\mathrm{her}}$ is a real vector space. Therefore,
$E_{\mathrm{her}}$ is a commutative real algebra.
\end{proof}

For a subset $F$ of $C(\mathbb{H};\mathbb{H})$, we denote by
${\mathrm{span}}_{\mathbb{H}}\, F$ the smallest two-sided $\mathbb{H}$-linear
subspace of $C(\mathbb{H};\mathbb{H})$ which contains $F$, that is,
${\mathrm{span}}_{\mathbb{H}}\, F$ is the set of finite sums of elements
$afb$, where $a,b\in \mathbb{H}$ and $f\in F$.

\section{Adapted spaces and quaternionic moment functionals}
\label{secadapted}

Suppose that $X$ be a locally compact topological Hausdorff space and
$E_{\mathbb{R}}$ be an $\mathbb{R}$-linear subspace of
$C(X;\mathbb{R})$. Let
\begin{align}
\label{defe+}
E_{+}:=\{ f\in E_{\mathbb{R}}:~ f(x)\geq 0 \quad \text{for} \quad x
\in X\}
\end{align}
First we recall Choquet's definition of an adapted space. See
\cite[Definition 1.5]{Schmudgen2017} and Choquet's theorem in
\cite{Choquet1969}.
\begin{definition}[Adapted Space]%
\label{adapted}
 The space $E_{\mathbb{R}}$ of $C(X;\mathbb{R})$ is called
an \emph{adapted space} if the following conditions are satisfied:
\begin{enumerate}
\item[\emph{(i)}] $ E_{\mathbb{R}}=E_{+}-E_{+}$.
\item[\emph{(ii)}] For each $x\in X$ there exists an $f\in E_{+}$ such that
$f(x)>0$.
\item[\emph{(iii)}] For each $f\in E_{+}$ there exists a $g\in E_{+}$ that
dominates $f$ in the sense that for any $\varepsilon >0$ there is a compact
subset $K_{\varepsilon}$ of $X$ such that
$f(x)\leq \varepsilon g(x)$ for all
$x\in X\backslash K_{\varepsilon}$.
\end{enumerate}
\end{definition}

Note that if $X$ is locally compact, but not compact, then by the Alexandroff
compactification there exists a point ``$\infty $'' such that
$X\cup \{\infty \}$ is compact. That $g$ dominates $f$ as in condition
(iii) means that $\frac{f(x)}{g(x)}\to 0$ as $x\to \infty $.
\begin{theorem}[Choquet's Theorem]%
\label{choquet}%
 Suppose that $E_{\mathbb{R}}$ is an adapted subspace
of $C(X;\mathbb{R})$. For each linear functional
$L:E_{\mathbb{R}}\to \mathbb{R}$ the following statements are equivalent:
\begin{enumerate}
\item[\emph{(i)}] The functional $L$ is $E_{+}$-positive, that is,
$L(f)\geq 0$ for all $f\in E_{+}$.
\item[\emph{(ii)}] For each $f\in E_{+}$ there exists $h\in E_{+}$ such that
$L(f+\varepsilon h)\geq 0$  for all $\varepsilon >0$.
\item[\emph{(iii)}] $L$ is a moment functional, that is, there exists a (positive)
Radon measure $\mu $ on $X$ such that
\begin{equation*}
L(f) = \int _{X}\, f(x) d\mu (x)\quad \text{for all}\quad f \in E_{
\mathbb{R}}.
\end{equation*}
\end{enumerate}
\end{theorem}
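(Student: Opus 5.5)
The implications (iii)$\Rightarrow$(i) and (i)$\Rightarrow$(ii) are immediate: integrals of nonnegative functions against a positive measure are nonnegative, and in (ii) one may take $h=f$ (or $h=0$), since $L(f+\varepsilon h)=(1+\varepsilon)L(f)\ge 0$. The substance is (ii)$\Rightarrow$(i)$\Rightarrow$(iii).

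For (ii)$\Rightarrow$(i), fix $f\in E_+$ and choose $h$ as in (ii). Then $L(f)+\varepsilon L(h)\ge 0$ for all $\varepsilon>0$, and letting $\varepsilon\to 0$ gives $L(f)\ge 0$.

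For (i)$\Rightarrow$(iii), I would follow Choquet's argument as presented in \cite[Theorem 1.8]{Schmudgen2017}. The plan has three steps.

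\emph{Step 1: extend $L$ to $C_c(X;\mathbb{R})$.} Consider the space
\begin{equation*}
F:=\{\varphi\in C(X;\mathbb{R}): |\varphi|\le g \text{ for some } g\in E_+\}.
\end{equation*}
By condition (ii) of Definition~\ref{adapted}, a compactness and partition argument produces for each compact $K$ some $g\in E_+$ with $g\ge 1$ on $K$. Hence $C_c(X;\mathbb{R})\subseteq F$, and by condition (i) also $E_{\mathbb{R}}\subseteq F$. Moreover $E_{\mathbb{R}}$ is cofinal in $F$ for the pointwise order. By the M.~Riesz extension theorem, $L$ extends to a positive linear functional $\tilde L$ on $F$. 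Restricting $\tilde L$ to $C_c$, the Riesz representation theorem gives a Radon measure $\mu$ with $\tilde L(\varphi)=\int\varphi\,d\mu$ for $\varphi\in C_c$.

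\emph{Step 2: show $\int f\,d\mu = L(f)$ for $f\in E_+$.} This is the main obstacle, and it is where condition (iii) of Definition~\ref{adapted} enters. Take $g\in E_+$ dominating $f$, and fix $\varepsilon>0$ with its compact set $K_\varepsilon$. Choose $\chi\in C_c$ with $0\le\chi\le 1$ and $\chi=1$ on $K_\varepsilon$. Then $f-\chi f\le \varepsilon g$ pointwise, so positivity of $\tilde L$ gives
\begin{equation*}
0\le L(f)-\int \chi f\,d\mu \le \varepsilon L(g).
\end{equation*}
For the lower bound, $\int \chi f\,d\mu\le L(f)$ for all such $\chi$. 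Taking the supremum over $\chi\uparrow 1$ and using inner regularity, we get $\int f\,d\mu\le L(f)$. The upper estimate shows $\int f\,d\mu\ge L(f)-\varepsilon L(g)$ for every $\varepsilon>0$. Together these give $\int f\,d\mu=L(f)$. In particular every $f\in E_+$ is $\mu$-integrable.

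\emph{Step 3: extend to all of $E_{\mathbb{R}}$.} By condition (i) of Definition~\ref{adapted}, every element of $E_{\mathbb{R}}$ is a difference of two elements of $E_+$, so linearity yields the representation of $L$ on all of $E_{\mathbb{R}}$.
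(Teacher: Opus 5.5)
Your proposal is correct and takes the same approach as the paper. The paper gives no argument of its own and simply cites Choquet and \cite[Theorem 1.8]{Schmudgen2017}, whose proof is exactly your route: extend $L$ by the M.~Riesz extension theorem to a space containing $C_c(X;\mathbb{R})$ in which $E_{\mathbb{R}}$ is cofinal, obtain $\mu$ from the Riesz representation theorem, and use the domination condition (iii) of Definition~\ref{adapted} to get $L(f)=\int f\,d\mu$ on $E_+$.
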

\begin{proof}
\cite{Choquet1969} or \cite[Theorem 1.8]{Schmudgen2017}.
\end{proof}

Next we develop a version of this result for the quaternionic moment problem.
For this we need some preliminaries and notation.

\begin{definition}
\label{defn3.3}
An \emph{$\mathbb{H}$-linear functional} on a two-sided $\mathbb{H}$-vector
space $E$ is a mapping $L:E\to \mathbb{H}$ such that
\begin{equation*}
L(afb+cgd)=aL(f)b +cL(f)d\quad \text{for all} \quad f,g\in E,~ a,b,c,d
\in \mathbb{H}.
\end{equation*}
\end{definition}
Suppose that $K$ is a closed subset of $\mathbb{H}$. Then $K$ is a locally
compact topological space in the induced topology from $\mathbb{H}$. From
(\ref{chareher}) it follows that the following definition makes sense:%
\begin{align}
E(K)_{+}:=\{ f\in E_{\mathrm{her}}:~ f(x)\geq 0 \quad \text{for}
\quad x\in K\}.
\end{align}
\begin{definition}%
\label{quatmf}
An $\mathbb{H}$-linear functional $L:E\to \mathbb{H}$ on a $*$-invariant
two-sided $\mathbb{H}$-vector space $E$ is called a
\emph{$K$-moment functional} on $E$ if there exists Radon measure
$\mu $ on the locally compact space $K$ such that
\begin{equation}
L(f) = \int _{K}\, f(x) d\mu (x)\quad \text{for all}\quad f \in E.
\end{equation}
\end{definition}
The following technical result is crucial for our study of the quaternionic
moment problem.
\begin{proposition}%
\label{extensionherHH}
Let $K$ be a closed subset of $\mathbb{H}$ and let $E$ be a two-sided
$\mathbb{H}$-linear subspace of  $C(K;\mathbb{H})$ which is $*$-invariant.
Let $L:E\to \mathbb{H}$ be an $\mathbb{H}$-linear functional on $E$. Suppose
that there exists a positive Radon measure $\mu $ on $K$ such that
\begin{equation}
\label{inteh1}
L(f) = \int _{K}\, f(x) d\mu (x)
\end{equation}
for all $f \in E_{\mathrm{her}}$. Then equation (\ref{inteh1}) holds for
all $f\in E$.
\end{proposition}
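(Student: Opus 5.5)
We prove the statement by decomposing an arbitrary $f\in E$ into real-valued (hermitian) pieces that lie in $E$, applying the hypothesis (\ref{inteh1}) to each piece, and then reassembling using $\mathbb{H}$-linearity of both $L$ and the integral. The natural decomposition is coordinatewise. Write $f=f_0+f_1\mathrm{i}+f_2\mathrm{j}+f_3\mathrm{k}$ with real-valued continuous functions $f_0,\dots,f_3$ on $K$. By analogy with (\ref{coordinates}) we expect
\begin{equation*}
f_0=\frac{f+f^{*}}{2},\quad f_1=\frac{f^{*}\mathrm{i}-\mathrm{i}f}{2},\quad f_2=\frac{f^{*}\mathrm{j}-\mathrm{j}f}{2},\quad f_3=\frac{f^{*}\mathrm{k}-\mathrm{k}f}{2}.
\end{equation*}
Since these are pointwise identities of quaternions, they follow from (\ref{coordinates}) applied to the value $q=f(x)$ at each $x\in K$.

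\textbf{Step 1: the pieces are hermitian elements of $E$.} Because $E$ is $*$-invariant, $f^{*}\in E$. Because $E$ is two-sided $\mathbb{H}$-linear, every expression on the right-hand side above lies in $E$. Each $f_\nu$ is real-valued, so by (\ref{chareher}) it belongs to $E_{\mathrm{her}}$. Alternatively, one can check directly that each expression is fixed by $*$ using (\ref{invprop}).

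\textbf{Step 2: apply the hypothesis.} By (\ref{inteh1}), $L(f_\nu)=\int_K f_\nu\,d\mu$ for $\nu=0,1,2,3$.

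\textbf{Step 3: reassemble.} Since $f=f_0+f_1\mathrm{i}+f_2\mathrm{j}+f_3\mathrm{k}$ with each $f_\nu\in E$, the $\mathbb{H}$-linearity of $L$ (Definition~\ref{defn3.3}, taking left coefficient $1$ and right coefficient a unit) gives
\begin{equation*}
L(f)=L(f_0)+L(f_1)\mathrm{i}+L(f_2)\mathrm{j}+L(f_3)\mathrm{k}.
\end{equation*}
The integral of an $\mathbb{H}$-valued function is defined componentwise, so right multiplication by a constant quaternion commutes with integration. Hence
\begin{equation*}
\int_K f\,d\mu=\sum_\nu \Bigl(\int_K f_\nu\,d\mu\Bigr) e_\nu, \qquad e_0=1,\ e_1=\mathrm{i},\ e_2=\mathrm{j},\ e_3=\mathrm{k},
\end{equation*}
which equals $L(f)$.

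\textbf{Main obstacle.} The only delicate point is integrability. We need $\int_K f\,d\mu$ to be meaningful for every $f\in E$, and this requires each component $f_\nu$ to be $\mu$-integrable. This holds because $f_\nu\in E_{\mathrm{her}}$, and the hypothesis (\ref{inteh1}) presupposes that elements of $E_{\mathrm{her}}$ are $\mu$-integrable. Everything else is routine verification.
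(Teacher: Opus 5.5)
Your proof is correct and follows the paper's argument. Both use the same coordinate decomposition of $f$ into the four hermitian elements $\frac{f+f^{*}}{2}$, $\frac{f^{*}\mathrm{i}-\mathrm{i}f}{2}$, etc., then $\mathbb{H}$-linearity of $L$ and of the integral, with integrability of $f$ inherited from its components; the only cosmetic difference is that you put the units on the right ($f_{1}\mathrm{i}$) where the paper puts them on the left ($\mathrm{i}f_{1}$), which is immaterial since the $f_{\nu}$ are real-valued.
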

\begin{proof}
Let $f\in E$. We set
\begin{align*}
f_{0}:=\frac{f+f^{*}}{2}, ~~ f_{1}:=
\frac{- \mathrm{i}f+f^{*} \mathrm{i}}{2}, ~~ f_{2}:=
\frac{- \mathrm{j}f+f^{*}\mathrm{j}}{2},~~ f_{3}:=
\frac{- \mathrm{k}f+f^{*}\mathrm{k}}{2}\, .
\end{align*}
Because $E$ is a $*$-invariant two-sided $\mathbb{H}$-linear space, the
functions $f_{0}, f_{1},f_{2},f_{3}$ belong to $E$, hence to
$E_{\mathrm{her}}$, and we have the decomposition
\begin{equation*}
f = f_{0} + \mathrm{i}\, f_{1} + \mathrm{j}\, f_{2} + \mathrm{k}\, f_{3}.
\end{equation*}
By assumption, the four functions $f_{0}, f_{1},f_{2},f_{3}$ are
$\mu $-integrable. Hence $f$ is $\mu $-integrable. Then, using the
$\mathbb{H}$-linearity of the functional $L$, equation (\ref{inteh1}) for
$f_{0},f_{1},f_{2}, f_{3}$ and finally the $\mathbb{H}$-linearity of the
integral we derive
\begin{equation*}
\begin{aligned}
L(f) &= L(f_{0} + \mathrm{i}f_{1} + \mathrm{j}f_{2} + \mathrm{k}f_{3})
\\
&= L(f_{0}) + \mathrm{i}L(f_{1}) + \mathrm{j}L(f_{2}) + \mathrm{k}L(f_{3})
\\
&= \int _{K} f_{0} \, d\mu +\mathrm{i}\int _{K} f_{1} \, d\mu +
\mathrm{j}\int _{K} f_{2} \, d\mu + \mathrm{k}\int _{K} f_{3} \, d
\mu
\\
& =\int _{K} (f_{0} + \mathrm{i}f_{1}+ \mathrm{j}f_{2}+ \mathrm{k}f_{3})
\, d\mu = \int _{K} f \, d\mu .
\end{aligned}
\end{equation*}
This proves that (\ref{inteh1}) holds for $f\in E$.
\end{proof}

As an immediate consequence we obtain the Richter-Tchakaloff theorem for
the quaternionic truncated moment problem.
\begin{corollary}%
\label{richter}
Let $K$, $E$, $L$, and $\mu $ be as in Proposition~\ref{extensionherHH}. Suppose that the vector space $E$ is finite dimensional.
Then there exists a finitely atomic measure
$\nu =\sum _{j=1}^{n} c_{j}\delta _{x_{j}}$, where $c_{j}\geq 0$ and
$x_{j}\in K$ for $j=1,\dots ,n$ and $n\leq \dim E_{\mathrm{her}}$, such
that $\nu $ is a representing measure for $L$, that is,
\begin{align}
\label{rtt}
L(f)=\int _{K} f(x)\, d\nu (x)\equiv \sum _{j=1}^{n} c_{j} f(x_{j})
\quad \textrm{for}~~~ f\in E.
\end{align}
\end{corollary}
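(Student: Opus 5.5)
The plan is to reduce to the classical real Richter--Tchakaloff theorem applied to the hermitian part, and then lift back to all of $E$ using Proposition~\ref{extensionherHH}.

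\textbf{Step 1 (restriction to the hermitian part).} By hypothesis, $L(f)=\int_K f\,d\mu$ for all $f\in E_{\mathrm{her}}$. This integral is real, so $L$ restricts to a real linear functional $L_{\mathrm{her}}:E_{\mathrm{her}}\to\mathbb{R}$. The space $E_{\mathrm{her}}$ is a real linear subspace of $C(K;\mathbb{R})$, and it is finite dimensional because $\dim_{\mathbb{R}}E_{\mathrm{her}}\le \dim_{\mathbb{R}}E<\infty$. Moreover, every $f\in E_{\mathrm{her}}$ is $\mu$-integrable, since the integrals in the hypothesis are assumed to exist.

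\textbf{Step 2 (classical Richter--Tchakaloff).} Apply the classical Richter--Tchakaloff theorem (Richter's theorem, cf.\ \cite[Theorem 1.24]{Schmudgen2017}) to the finite-dimensional space $E_{\mathrm{her}}$ of $\mu$-integrable real measurable functions on $K$. It gives a finitely atomic measure $\nu=\sum_{j=1}^n c_j\delta_{x_j}$ with $c_j>0$, $x_j\in K$ and $n\le \dim E_{\mathrm{her}}$ such that
\[
\int_K f\,d\mu=\int_K f\,d\nu \quad\text{for all } f\in E_{\mathrm{her}}.
\]

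If I wanted a self-contained argument for this step, I would use the standard convexity proof. Choose a basis $h_1,\dots,h_m$ of $E_{\mathrm{her}}$ and consider the moment vector $s=(\int h_i\,d\mu)_i\in\mathbb{R}^m$. One shows that $s$ lies in the convex cone generated by the points $(h_i(x))_i$ for $x\in K$, arguing by induction on dimension via supporting hyperplanes. Carathéodory's theorem then bounds the number of atoms by $m$. I expect this to be the only nontrivial step, but it is exactly the known real result, so I would simply cite it.

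\textbf{Step 3 (lifting to $E$).} Now $L(f)=\int_K f\,d\nu$ for all $f\in E_{\mathrm{her}}$, and $\nu$ is a positive Radon measure on $K$. Applying Proposition~\ref{extensionherHH} with $\nu$ in place of $\mu$ extends this identity to all $f\in E$. This gives (\ref{rtt}) with the finite sum form, because integration of a continuous $\mathbb{H}$-valued function against $\sum_j c_j\delta_{x_j}$ is $\sum_j c_j f(x_j)$.
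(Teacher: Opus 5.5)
Your proposal is correct and follows essentially the same route as the paper. You apply the classical Richter--Tchakaloff theorem to the real functional $L$ on the finite-dimensional space $E_{\mathrm{her}}$ to get $\nu$ with $n\le\dim E_{\mathrm{her}}$, and then extend the representation from $E_{\mathrm{her}}$ to all of $E$ by applying Proposition~\ref{extensionherHH} with $\nu$ in place of $\mu$.
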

\begin{proof}
We apply the classical Richter-Tchakaloff theorem (see e.g.,
\cite[Theorem 1.24]{Schmudgen2017}) to the real-valued functional
$L$, defined by (\ref{inteh1}), on the finite-dimensional vector space
$E_{\mathrm{her}}$ of real-valued functions on $K$. By this theorem, there
exists a finitely atomic measure
$\nu =\sum _{j=1}^{n} c_{j}\delta _{x_{j}}$, with
$n\leq \dim E_{\mathrm{her}}$, such that $L(f)=\int _{K} f d\nu $ for
$f\in E_{\mathrm{her}}$. Then Proposition~\ref{extensionherHH} implies
that the latter equality holds for all $f\in E$ which gives (\ref{rtt}).
\end{proof}
\begin{theorem}[Quaternionic Choquet Theorem]%
\label{quatchoquet}%
 Let $K$ be a closed subset of
$\mathbb{H}$. Suppose that $E$ is a two-sided $*$-invariant
$\mathbb{H}$-linear subspace of $C(K; \mathbb{H})$ such that
$E_{\mathrm{her}}$ is an adapted subspace according to Definition~\ref{adapted}. Let $L:E \to \mathbb{H}$ be an $\mathbb{H}$-linear functional
on $E$. Then the following are equivalent:
\begin{enumerate}
\item[\emph{(i)}] The functional $L$ is $E_{+}$-positive, that is,
$L(f)\geq 0$ for all $f\in E_{+}$.
\item[\emph{(ii)}] For each $f\in E_{+}$ there exists an $h\in E_{+}$ such
that $L(f+\varepsilon h)\geq 0$  for all $\varepsilon >0$.
\item[\emph{(iii)}] $L$ is a $K$-moment functional on $E$, that is, there
exists a positive Radon measure $\mu $ on $K$ such that
\begin{equation}
\label{inteh}
L(f) = \int _{K}\, f(x) d\mu (x)\quad \text{for all}\quad f \in E.
\end{equation}
\end{enumerate}
\end{theorem}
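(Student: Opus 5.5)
The plan is to reduce everything to the classical Choquet theorem (Theorem~\ref{choquet}), applied on $X=K$ to the adapted space $E_{\mathbb{R}}:=E_{\mathrm{her}}$. Proposition~\ref{extensionherHH} then lifts the conclusion back to all of $E$. In this setting $E_{+}$ means $E(K)_{+}$, the set of elements of $E_{\mathrm{her}}$ that are nonnegative on $K$. Note that $E_{\mathrm{her}}$ consists of real-valued functions by (\ref{chareher}), so it is indeed a real linear subspace of $C(K;\mathbb{R})$.

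\textbf{Step 1: $L$ is real on $E_{\mathrm{her}}$.} For $f\in E_{\mathrm{her}}$ we have $f=f^{*}$, and we want to show $L(f)\in\mathbb{R}$. First I would show that $L$ commutes with $*$, i.e.\ $L(f^{*})=L(f)^{*}$. This should follow from the conjugation formula (\ref{conjugationformula}): applied pointwise it gives $f^{*}=-\frac12(f+\mathrm{i}f\mathrm{i}+\mathrm{j}f\mathrm{j}+\mathrm{k}f\mathrm{k})$, and this combination lies in $E$ by two-sided $\mathbb{H}$-linearity. Applying the $\mathbb{H}$-linear functional $L$ then yields
\[
L(f^{*})=-\tfrac12\bigl(L(f)+\mathrm{i}L(f)\mathrm{i}+\mathrm{j}L(f)\mathrm{j}+\mathrm{k}L(f)\mathrm{k}\bigr)=L(f)^{*}.
\]
Hence for $f$ hermitian, $L(f)=L(f)^{*}$, so $L(f)$ is real. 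Thus the restriction $L_{\mathrm{her}}:=L|_{E_{\mathrm{her}}}$ is a real linear functional $E_{\mathrm{her}}\to\mathbb{R}$. This is also what makes statements (i) and (ii) meaningful, since they require $L(f)$ and $L(f+\varepsilon h)$ to be real.

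\textbf{Step 2: the equivalences.} Conditions (i) and (ii) of the theorem are, verbatim, conditions (i) and (ii) of Theorem~\ref{choquet} for $L_{\mathrm{her}}$ on the adapted space $E_{\mathrm{her}}$. So (i)$\Leftrightarrow$(ii)$\Leftrightarrow$ ``there is a positive Radon measure $\mu$ on $K$ with $L(f)=\int_K f\,d\mu$ for all $f\in E_{\mathrm{her}}$.''
\begin{itemize}
\item If this last statement holds, Proposition~\ref{extensionherHH} extends the integral representation to all $f\in E$, which is (iii).
\item Conversely, (iii) restricted to $E_{\mathrm{her}}$ gives the representation on the hermitian part, and hence (i) directly: $L(f)=\int_K f\,d\mu\ge 0$ for $f\ge 0$ on $K$.
\end{itemize}

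The only real point to check is Step 1, namely that $L$ maps hermitian elements to reals. Without it, the positivity conditions in (i) and (ii) make no sense and Choquet's theorem cannot be invoked. The conjugation-formula argument settles it, and everything else is a direct transfer of earlier results.
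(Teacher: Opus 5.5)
Your proof is correct and follows the paper's argument: you restrict $L$ to the adapted space $E_{\mathrm{her}}$ on $X=K$, apply the classical Choquet theorem, and then lift the integral representation to all of $E$ with Proposition~\ref{extensionherHH}. The only difference is how you show $L$ is real-valued on $E_{\mathrm{her}}$: the paper deduces it from (i) together with $E_{\mathrm{her}}=E_{+}-E_{+}$, whereas you derive $L(f^{*})=L(f)^{*}$ unconditionally from the conjugation formula (\ref{conjugationformula}) and two-sided $\mathbb{H}$-linearity, which is valid, slightly stronger, and also makes condition (ii) meaningful without first assuming (i).
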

\begin{proof}
The equivalence (i)$\iff $(ii) and the implication (iii)$\Rightarrow $(i)
are clear. Let us prove the main implication (i)$\Rightarrow $(iii).

Let $L_{\mathrm{her}}$ denote the restriction of $L$ to
$E_{\mathrm{her}}$. (i) implies, in particular, that
$L_{\mathrm{her}}(f)\in \mathbb{R}$ for $f\in E_{+}$. From
$E_{\mathrm{her}}=E_{+}-E_{+}$ (by Definition~\ref{adapted}) it follows
that $L_{\mathrm{her}}$ is real-valued. Since $E_{\mathrm{her}}$ is an
adapted $\mathbb{R}$-linear subspace of $C(K; \mathbb{R})$, Theorem~\ref{choquet} applies to the functional
$L_{\mathrm{her}}:E_{\mathrm{her}}\to \mathbb{R}$ and yields that there
exists a Radon measure $\mu $ on the subset $K$ of $\mathbb{H}$ such that
\begin{equation*}
L(f)= L_{\mathrm{her}}(f) = \int _{K}\, f(x) d\mu (x)\quad
\text{for all}\quad f \in E_{\mathrm{her}}.
\end{equation*}
Now from Proposition~\ref{extensionherHH} we conclude that (\ref{inteh})
holds for all $f\in E$.%
\end{proof}

\section{Quaternionic polynomial spaces}
\label{polynomialspaces}

All quaternionic polynomials defined in the sequel will be considered as
elements of $C(\mathbb{H}; \mathbb{H})$ and the algebraic structures on
quaternionic polynomials are inherited from the quaternionic $*$-algebra
$C(\mathbb{H}; \mathbb{H})$.

\subsection{The basic quaternionic polynomial space}
\label{sec4.1}

We begin with the smallest of our hierarchy of spaces of quaternionic polynomials.
\begin{definition}%
\label{defih0}
\begin{equation*}
\begin{array}{l@{\quad}l@{\quad}l}
\mathbb{H}^{0}[q^{*}, q] &=& \Big\{f= \sum _{j,k,l} a_{jkl}\, (q^{*})^{k}
q^{l}\, b_{jkl} : j, k,l \in \mathbb{N}_{0}~ \textrm{ and }~ a_{jkl}, b_{jkl}
\in \mathbb{H}\Big\}.
\end{array}
\end{equation*}
\end{definition}

Since $q^{*}=\|q\|^{2}\, q^{-1}$ and $\|q\|$ commutes with $q$, it follows
that
\begin{align}
\label{qmqngenerators}
\big( (q^{*})^{k} q^{l}\big)\big( (q^{*})^{m} q^{n} \big)= (q^{*})^{k+m}q^{n+l}
, \quad k,l,m,n\in \mathbb{N}.
\end{align}
Therefore, the generators $(q^{*})^{k} q^{l}$ of the $\mathbb{H}$-linear
space $\mathbb{H}^{0}[q^{*}, q]$ pairwise commute, but the scalar multiplication
of $(q^{*})^{k} q^{l}$ with $a_{jkl}, b_{jkl}\in \mathbb{H}$ is not commutative
in general.

Obviously, $\mathbb{H}^{0}[q^{*}, q]$ is a two-sided $\mathbb{H}$-linear
subspace of $C(\mathbb{H};\mathbb{H})$. Further, since
$f^{*}= \sum _{j,k,l} (b_{jkl})^{*}\, (q^{*})^{l} q^{k}\, (a_{jkl})^{*}$,
the space $\mathbb{H}^{0}[q^{*}, q]$ is also invariant under the involution
$f\to f^{*}$. That is, $\mathbb{H}^{0}[q^{*}, q] $ is a $*$-invariant two-sided
$\mathbb{H}$-linear subspace of $C(\mathbb{H};\mathbb{H})$.

Now we look for a simpler representation of elements of
$\mathbb{H}^{0}[q^{*}, q]$. Set
\begin{align}
\label{defSigma}
\Sigma :=\{ 1, \  \mathrm{i}, \ \mathrm{j}, \  \mathrm{k}\}.
\end{align}
Let $ \mathcal{N}$ denote the set of all triples $(k,n,\sigma )$, where
$k,n\in \mathbb{N}_{0}, \sigma \in \Sigma $ for $n>k$, and $(n,n,1)$, where
$n\in \mathbb{N}_{0}$.

The following result is used in the proof of the next theorem.
\begin{lemma}%
\label{fqzerolemma}
Consider a finite sum
\begin{align*}
f(q)=\sum _{p=0}^{n}\sum _{\sigma \in \Sigma} a_{p\sigma} q^{p}
\sigma ,\quad \textrm{where} \quad a_{p\sigma}\in \mathbb{H}, n\in
\mathbb{N}_{0}.
\end{align*}
If $f(q)=0$ for all $q\in \mathbb{S}:=\{ q\in \mathbb{H}: \|q\|=1\}$, then
$a_{p\sigma}=0$ for all $p,\sigma $.
\end{lemma}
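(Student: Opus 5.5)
The plan is to restrict $f$ to the circles $\{\cos t+u\sin t\}$ cut out of $\mathbb{S}$ by the complex slices of $\mathbb{H}$. On such a circle the powers $q^{p}$ are explicit, and the problem splits into a trigonometric part and a purely algebraic part. For each $p$ define the $\mathbb{R}$-linear map
\begin{equation*}
g_{p}:\mathbb{H}\to \mathbb{H},\qquad g_{p}(h):=\sum _{\sigma \in \Sigma} a_{p\sigma}\, h\, \sigma ,
\end{equation*}
so that $f(q)=\sum _{p=0}^{n} g_{p}(q^{p})$.

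\textbf{Step 1 (slices).} Fix a unit imaginary quaternion $u$, so $u^{2}=-1$, and put $q=\cos t+u\sin t\in \mathbb{S}$. Since $u$ behaves like the complex unit, $q^{p}=\cos (pt)+u\sin (pt)$. By $\mathbb{R}$-linearity of $g_{p}$ we get
\begin{equation*}
0=f(q)=g_{0}(1)+\sum _{p=1}^{n}\big (\cos (pt)\, g_{p}(1)+\sin (pt)\, g_{p}(u)\big )\quad \text{for all } t\in \mathbb{R}.
\end{equation*}
Taking the four real coordinates and using the real linear independence of $1,\cos (pt),\sin (pt)$ for $p\ge 1$, we obtain $g_{0}(1)=0$. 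For every $p\ge 1$ we also obtain $g_{p}(1)=0$ and $g_{p}(u)=0$ for all unit imaginary $u$. Since $1,\mathrm{i},\mathrm{j},\mathrm{k}$ span $\mathbb{H}$ over $\mathbb{R}$, this gives $g_{p}\equiv 0$ on $\mathbb{H}$ for $p\ge 1$.

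\textbf{Step 2 (algebraic independence).} Next we show that $g_{p}\equiv 0$ forces $a_{p\sigma}=0$. Write $a_{p\sigma}=\sum _{s\in \Sigma} \alpha _{p\sigma s}\, s$ with $\alpha _{p\sigma s}\in \mathbb{R}$. Then $g_{p}$ is the real combination $\sum _{s,\sigma}\alpha _{p\sigma s}\, L_{s}R_{\sigma}$, where $L_{s}$ and $R_{\sigma}$ are left and right multiplication. It therefore suffices to know that the $16$ operators $L_{s}R_{\sigma}$ are $\mathbb{R}$-linearly independent in $\mathrm{End}_{\mathbb{R}}(\mathbb{H})$. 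This is the classical fact that the map $\mathbb{H}\otimes _{\mathbb{R}}\mathbb{H}^{\mathrm{op}}\to \mathrm{End}_{\mathbb{R}}(\mathbb{H})\cong M_{4}(\mathbb{R})$ is an isomorphism: its domain is a simple algebra and the map is a nonzero algebra homomorphism, hence injective, and both sides have dimension $16$. Alternatively, one can check it directly by evaluating at $h=1,\mathrm{i},\mathrm{j},\mathrm{k}$ and solving the resulting $16\times 16$ real linear system. This yields $a_{p\sigma}=0$ for all $p\ge 1$ and all $\sigma$.

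\textbf{Main obstacle: the term $p=0$.} Here $q^{0}=1$, so the four summands collapse to the single constant $\sum _{\sigma}a_{0\sigma}\sigma$. Step 1 only gives $\sum _{\sigma}a_{0\sigma}\sigma =0$, and that is genuinely weaker than the claim. For example, $a_{01}=-\mathrm{i}$, $a_{0\mathrm{i}}=1$, and all other coefficients zero gives $f\equiv 0$. So the statement can only hold for $p=0$ with the normalization built into $\mathcal{N}$, where the index $(0,0,1)$ carries only $\sigma =1$. Under that convention $g_{0}(1)=a_{01}=0$, which completes the proof. In writing up I would state this convention explicitly, restricting the sum at $p=0$ to $\sigma =1$, which is exactly the form needed for the representation over $\mathcal{N}$ in the next theorem.
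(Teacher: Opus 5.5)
Your proof is correct, and your caveat about $p=0$ is justified. As literally stated the lemma fails: your example $a_{01}=-\mathrm{i}$, $a_{0\mathrm{i}}=1$ gives $f\equiv 0$. The paper's own proof also only shows that the constant term $b_{0}=\sum_{\sigma}a_{0\sigma}\sigma$ vanishes. That weaker conclusion is exactly what Theorem~\ref{propcanonicalrep} uses, since on the diagonal $\mathcal{N}$ contains only the indices $(n,n,1)$.

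Your Step~1 is the paper's argument. The paper restricts to $q=\cos\theta+\tau\sin\theta$ with $\tau\in\{\mathrm{i},\mathrm{j},\mathrm{k}\}$ and separates cosine and sine parts via $f(q)\pm f(q^{-1})$. This yields $\sum_{\sigma}b_{k\sigma}\sigma=0$ and $\sum_{\sigma}b_{k\sigma}\tau\sigma=0$, which are precisely your conditions $g_{k}(1)=g_{k}(\tau)=0$.

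You differ in Step~2. The paper finishes by a hand computation. It uses $\tau\sigma\tau=\sigma$ for $\sigma\neq\tau$ and $\tau^{3}=-\tau$ (misprinted there as $\pm 1$) to get $0=\sum_{\sigma}b_{k\sigma}\sigma-\big(\sum_{\sigma}b_{k\sigma}\tau\sigma\big)\tau=2b_{k,1}+2b_{k\tau}\tau$. Hence $b_{k\tau}=b_{k,1}\tau$, and substituting back gives $0=b_{k,1}(1+\mathrm{i}^{2}+\mathrm{j}^{2}+\mathrm{k}^{2})=-2b_{k,1}$. You instead quote the injectivity of $\mathbb{H}\otimes_{\mathbb{R}}\mathbb{H}^{\mathrm{op}}\to\mathrm{End}_{\mathbb{R}}(\mathbb{H})$.

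Your version is more conceptual: it identifies the needed fact as the linear independence of the sixteen maps $h\mapsto sh\sigma$, and it would carry over verbatim to any central simple algebra. It does, however, lean on the simplicity of $\mathbb{H}\otimes\mathbb{H}^{\mathrm{op}}$. The paper's three-line computation is self-contained and amounts to a direct proof of that same independence, in the form with quaternionic left coefficients.
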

\begin{proof}
Let $\tau \in \{\mathrm{i}, \mathrm{j}, \mathrm{k}\}$ and
$\theta \in [0,2\pi ]$. Set $q=\cos \theta +\tau \sin \theta $. Then
$q\in \mathbb{S}$ and hence
\begin{align*}
0 =&f(q)+f(q^{-1})= 2b_{0}+\sum _{k=1}^{n}\sum _{\sigma \in \Sigma } b_{k
\sigma} (q^{k}+q^{-k})\sigma
\\
=&2b_{0}+ 2 \sum _{k=1}^{n}\sum _{\sigma \in \Sigma } b_{k\sigma}(
\cos (k \theta ))\sigma =2b_{0}+ 2\sum _{k=1}^{n}\left [\sum _{
\sigma \in \Sigma } b_{k\sigma}\sigma \right ] \cos (k\theta ).
\end{align*}
Using the fact that the family
$\{\cos (k\theta ): k\in {\mathbb N}\}$ is independent, we conclude that
$b_{0}=0$ and
$L_{k}:=\sum \nolimits _{\sigma \in \Sigma } b_{k\sigma}\sigma =0$ for
$k=1,\dots ,n$. Further,
\begin{align*}
0 =&f(q)-f(q^{-1})= \sum _{k=1}^{n}\sum _{\sigma \in \Sigma } b_{k
\sigma} (q^{k}-q^{-k})\sigma
\\
=&2 \, \sum _{k=1}^{n}\sum _{\sigma \in \Sigma } b_{k\sigma} (\tau
\sin (k\theta ))\sigma = 2\, \sum _{k=1}^{n}\left [\sum _{\sigma \in
\Sigma } b_{k\sigma}\tau \sigma \right ]\sin (k\theta ).
\end{align*}
Since $\{\sin (k\theta ); k\in {\mathbb N}\}$ is independent, we obtain
$L_{k,\tau}:=\sum \nolimits _{\sigma \in \Sigma } b_{k\sigma}\tau
\sigma =0$ for $k=1,\dots ,n$.

Let $\tau ,\sigma \in \{\mathrm{i}, \mathrm{j}, \mathrm{k}\}$. Then
$\tau \sigma \tau =1$ for $\tau \ne \sigma $ and
$\tau \sigma \tau =-1$ for $\tau =\sigma $. Using this we compute
\begin{align*}
0= L_{k} -L_{k,\tau}\tau = 2b_{k,1} +2 b_{k,\tau} \tau +\sum _{
\sigma \neq 1, \tau} b_{k,\sigma} (\sigma -\tau \sigma \tau ) =2 b_{k,1}+
2b_{k\tau}\tau ,
\end{align*}
so that $b_{k\tau}=b_{k,1}\tau $. Therefore,
\begin{equation*}
0=L_{k}=b_{k,1}+b_{k \mathrm{i}}\mathrm{i}+b_{k \mathrm{j}}\mathrm{j}+b_{k
\mathrm{k}}\mathrm{k}=b_{k,1}(1+\mathrm{i}^{2}+\mathrm{j}^{2}+k^{2})=-2b_{k,1}.
\end{equation*}
Thus $b_{k,1}=0$ and hence $b_{k \sigma}=0$ for all $k=1,\dots ,n$ and
$\sigma \in \Sigma $.
\end{proof}

\begin{theorem}%
\label{propcanonicalrep}
The set
\begin{align*}
\mathcal{B} &= \{(q^{*})^{k} q^{n} \sigma :(k,n,\sigma ) \in
\mathcal{N}\}
\\
& \equiv \{(q^{*})^{k}q^{n}\sigma : k,n\in \mathbb{N}_{0}, n>k ,
\sigma \in \Sigma \} \cup \{(q^{*})^{n}q^{n}: n\in \mathbb{N}_{0}\}
\end{align*}
is a left $\mathbb{H}$-module basis of  $\mathbb{H}^{0}[q^{*}, q]$. That
is, each $f\in \mathbb{H}^{0}[q^{*}, q]$ can be written as a finite sum
\begin{align}
\label{canonicalrep}
f=\sum _{\substack{(k,n,\sigma )\in \mathcal{N}}} ~ a_{kn\sigma} (q^{*})^{k}
q^{n} \sigma ,\quad \textrm{with}~~~\, a_{kn\sigma} \in \mathbb{H},
\end{align}
and the coefficients $a_{kn\sigma} $ are uniquely determined by $f$ for
all $(k, n, \sigma )\in \mathcal{N}$.
\end{theorem}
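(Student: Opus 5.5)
The plan has two parts: first show that $\mathcal{B}$ spans $\mathbb{H}^{0}[q^{*},q]$ as a left $\mathbb{H}$-module, then show uniqueness of the coefficients by restricting to spheres and invoking Lemma~\ref{fqzerolemma}.

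\emph{Spanning.} Start with a generator $a(q^{*})^{k}q^{l}b$. Write $b=\sum_{\sigma\in\Sigma}\beta_{\sigma}\sigma$ with $\beta_{\sigma}\in\mathbb{R}$. Real scalars commute with everything, so
\[
a(q^{*})^{k}q^{l}b=\sum_{\sigma\in\Sigma}(a\beta_{\sigma})(q^{*})^{k}q^{l}\sigma .
\]
Hence the left $\mathbb{H}$-span of all $(q^{*})^{k}q^{l}\sigma$, with $k,l\in\mathbb{N}_{0}$ and $\sigma\in\Sigma$, is the whole space. It remains to remove the indices with $k\ge l$ that are not in $\mathcal{N}$.

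If $k=l$, then $(q^{*})^{n}q^{n}=\|q\|^{2n}$ is real-valued and commutes with $\sigma$. So $a(q^{*})^{n}q^{n}\sigma=(a\sigma)(q^{*})^{n}q^{n}$, which is a left multiple of an element of $\mathcal{B}$.

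If $k>l$, put $m=k-l$. By (\ref{qmqngenerators}), $(q^{*})^{k}q^{l}=\|q\|^{2l}(q^{*})^{m}$. Apply the conjugation formula (\ref{conjugationformula}) pointwise to $p=q^{m}$, using $(q^{*})^{m}=(q^{m})^{*}$:
\[
(q^{*})^{m}=-\tfrac12\sum_{\tau\in\Sigma}\tau\,q^{m}\,\tau .
\]
Multiply by the real function $\|q\|^{2l}=(q^{*})^{l}q^{l}$ and move it next to $q^{m}$. This gives
\[
(q^{*})^{k}q^{l}=-\tfrac12\sum_{\tau\in\Sigma}\tau\,(q^{*})^{l}q^{k}\,\tau .
\]
Since $k>l$, each term on the right is a left multiple of the element $(q^{*})^{l}q^{k}\tau$ of $\mathcal{B}$. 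Multiplying on the right by $\sigma$ keeps us inside the span, because $\tau\sigma=\pm\sigma'$ for some $\sigma'\in\Sigma$. This proves that every $f$ has a representation (\ref{canonicalrep}).

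\emph{Uniqueness.} Suppose $\sum_{(k,n,\sigma)\in\mathcal{N}}a_{kn\sigma}(q^{*})^{k}q^{n}\sigma=0$ identically. Substitute $q=ru$ with $r>0$ and $\|u\|=1$. Then $(q^{*})^{k}q^{n}=r^{k+n}u^{n-k}$ for $n\ge k$, since $u^{*}=u^{-1}$. Setting $p=n-k$, the relation becomes
\[
0=\sum_{p}\sum_{\sigma\in\Sigma}\Big(\sum_{k}a_{k,k+p,\sigma}\,r^{2k+p}\Big)u^{p}\sigma \quad\text{for all } u\in\mathbb{S},
\]
where for $p=0$ only $\sigma=1$ occurs. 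For fixed $r$, Lemma~\ref{fqzerolemma} shows that every inner coefficient vanishes, that is, $\sum_{k}a_{k,k+p,\sigma}r^{2k+p}=0$ for all $r>0$. The exponents $2k+p$ are distinct for distinct $k$, so this is a polynomial in $r$ vanishing on $(0,\infty)$, and each $a_{k,k+p,\sigma}$ must be $0$. Hence all coefficients are zero.

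The main obstacle is the reduction of $(q^{*})^{m}$ in the spanning step: expressing it with only positive powers of $q$ and coefficients placed on the correct sides. The conjugation formula (\ref{conjugationformula}) is what makes this work. The uniqueness step is routine once Lemma~\ref{fqzerolemma} is available.
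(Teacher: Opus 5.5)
Your proof is correct and follows the paper's route. Spanning goes through the conjugation formula: the paper applies it directly to $\sigma^{*}(q^{*})^{n}q^{k}$, while you apply it to $q^{m}$ and then absorb the real factor $\|q\|^{2l}$ and the unit $\sigma$. Uniqueness goes through $q=r\omega$ together with Lemma~\ref{fqzerolemma}.

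The only difference is the order of the two uniqueness steps. You apply the lemma for each fixed $r$ and then compare powers of $r$, whereas the paper first separates the powers $r^{m}$, $m=k+n$, and then applies the lemma to each coefficient $b_{m}$. Your remark that only $\sigma=1$ occurs at $p=0$ is exactly what makes the lemma applicable there.
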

\begin{proof}
First we verify that each $f\in \mathbb{H}^{0}[q^{*}, q]$ is of the form
(\ref{canonicalrep}). Let $f$ be as in Definition~\ref{defih0}. Since each
quaternion is a real combination of
$1,\mathrm{i}, \mathrm{j},\mathrm{k}$ and real numbers commute with quaternions,
$f$ is a finite sum of terms $a (q^{*})^{k} q^{n} \sigma $, where
$a\in \mathbb{H}$, $\sigma \in \Sigma $, and $k,n\in \mathbb{N}_{0}$. The
conjugation formula (\ref{conjugationformula}), applied to
$\sigma ^{*} (q^{*})^{n} q^{k}$, gives
\begin{equation*}
(q^{*})^{k} q^{n} \sigma = -\frac{1}{2}[\, \sigma ^{*} (q^{*})^{n} q^{k}
+ \mathrm{i}(\sigma ^{*} (q^{*})^{n} q^{k})\mathrm{i}+ \mathrm{j}(
\sigma ^{*} (q^{*})^{n} q^{k})\mathrm{j}+ \mathrm{k}(\sigma ^{*} (q^{*})^{n}
q^{k})\mathrm{k}\, ]
\end{equation*}
If $k>n$, it follows from this identity that
$(q^{*})^{k} q^{n} \sigma $ is a sum of terms
$a(q^{*})^{l} q^{m}\tau $ with $m>l$, $\tau \in \sigma $ and
$a\in \mathbb{H}$. If $k=n$, then
$(q^{*})^{n}q^{n}\sigma =\sigma (q^{*})^{n} q^{n}$. Thus in both cases,
$f$ is a finite sum of elements $a (q^{*})^{k} q^{n} \sigma $, where
$(k,n,\sigma )\in \mathcal{N}$ and $a\in \mathbb{H}$, that is, $f$ is of
the form (\ref{canonicalrep}).

Next we prove the uniqueness assertion. It suffices to show that
$f(q)=0$ for all $q\in \mathbb{H}$ implies that all coefficients
$a_{kn\sigma}$ vanish. We write $q=r\omega $, where $r=|q|$ and
$\omega $ is in the quaternionic sphere
$\mathbb{S}=\{q\in \mathbb{H}: |q|=1\}$. From
$\omega ^{*} = \omega ^{-1}$ for $\omega \in \mathbb{S}$ we obtain
$(q^{*})^{k} q^{n} = r^{k+n} \omega ^{n-k}$. Set $m := n + k$. Then
\begin{align}
\label{fq0romega}
f(q)=\sum _{m\geq 0} r^{m} \left [\sum _{
\substack{(k,n,\sigma )\in \mathcal{N},k+n=m}} a_{kn\sigma} \omega ^{n-k}
\sigma \right ] =0
\end{align}
for all $q\in \mathbb{H}$. Let $b_{m}$ denote the coefficient in squared
brackets. First we note that $b_{m}=0$ for all $m\in \mathbb{N}$. Indeed,
if there would be a non-zero $b_{m}$ with $m\in \mathbb{N}$, let $d$ be
the largest $m$ such that $b_{m}\neq 0$. Considering the norm of
$r^{d}b_{d}$ for sufficiently large $r>0$ in (\ref{fq0romega}) leads to
contradiction. Thus $b_{m}=0$ for $m\geq 1$. Let us fix
$m\in \mathbb{N}$.

Set $j:=n-k$. Then we have $k=(m-j)/2, n=(m+j)/2$ and
\begin{equation}
\label{bmzero}
b_{m}=\sum _{j,\sigma ;
\substack{((m-j)/2, (m+j)/2,\sigma )\in \mathcal{N}}} a_{(m-j)/2,(m+j)/2,
\sigma}\, \omega ^{j} \sigma = 0,
\end{equation}
for all $\omega \in \mathbb{S}$. (Note that the sum in (\ref{bmzero}) is
over all $j$ and $\sigma $ such that
$ ((m-j)/2, (m+j)/2,\sigma )\in \mathcal{N}$.) Therefore, it follows from
Lemma~\ref{fqzerolemma} that the coefficients of all expressions
$\omega ^{j}\sigma $ for $j\geq 1$ and the coefficient of
$\omega ^{0} \cdot 1$ are zero. That is,
$a_{(m-j)/2,(m+j)/2,\sigma}=0$ for all
$ ( (m-j)/2, (m+j)/2,\sigma )\in \mathcal{N}$ with $j\geq 0$ and
$a_{m/2,m/2,1}=0$ provided that $((m/2,m/2,1)\in \mathcal{N}$.

Finally, we consider the case $m=0$. Since $b_{m}=0$ for
$m\in \mathbb{N}$, we have $f(q)=b_{0}=a_{0,0,1}=0$. Thus we have shown
that all coefficients $ a_{kn\sigma}$ vanish, which completes the proof
of the uniqueness assertion.
\end{proof}

\begin{definition}
\label{defn4.4}
(\ref{canonicalrep}) is called the \emph{canonical representation} of
$f\in \mathbb{H}^{0}[q^{*}, q]$.
\end{definition}

The space $ \mathbb{H}^{0}[q^{*}, q]$ is not closed under multiplication.
The subset
\begin{equation}
\mathbb{H}^{0}_{\mathbb{R}}[q^{*}, q] = \Big\{ \sum \nolimits _{k,n} a_{kn}
\, (q^{*})^{k} q^{n} : \, a_{kn}\in \mathbb{R}\Big\}
\label{eq24}
\end{equation}
of $\mathbb{H}^{0}[q^{*}, q] $ is a real vector space which is invariant
under the involution. Since $q$ and $q^{*}$ commute, it is also invariant
under multiplication. Thus $\mathbb{H}^{0}_{\mathbb{R}}[q^{*}, q]$ is a
commutative unital real $*$-algebra.

The hermitian parts
\begin{align*}
\mathbb{H}^{0}_{\mathbb{R}}[q^{*}, q]_{\mathrm{her}} & =\{ f\in
\mathbb{H}^{0}_{\mathbb{R}}[q^{*}, q]: \,f=f^{*}\},
\\
\mathbb{H}^{0}[q^{*}, q]_{\mathrm{her}} & =\{ f\in \mathbb{H}^{0}[q^{*},
q]: \,f=f^{*}\}
\end{align*}
are explicitly described in the following theorem.
\begin{theorem}%
\label{realpartH_L}
\begin{align}
\label{herpart}
\mathbb{H}^{0}_{\mathbb{R}}[q^{*}, q]_{\mathrm{her}} =& ~\mathbb{R}[x_{0},x_{1}^{2}+x_{2}^{2}+x_{3}^{2}],
\\
\mathbb{H}^{0}[q^{*}, q]_{\mathrm{her}} =& ~\mathbb{R}[x_{0},x_{1}^{2}+x_{2}^{2}+x_{3}^{2}]
+ x_{1}\, \mathbb{R}[x_{0},x_{1}^{2}+x_{2}^{2}+x_{3}^{2}]
\label{herpart1}
\\
&+x_{2}\, \mathbb{R}[x_{0},x_{1}^{2}+x_{2}^{2}+x_{3}^{2}]+ x_{3}\,
\mathbb{R}[x_{0},x_{1}^{2}+x_{2}^{2}+x_{3}^{2}].
\nonumber
\end{align}
\end{theorem}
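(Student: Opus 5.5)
Write $\rho:=x_{1}^{2}+x_{2}^{2}+x_{3}^{2}$ and $v:=x_{1}\mathrm{i}+x_{2}\mathrm{j}+x_{3}\mathrm{k}$, so that $q=x_{0}+v$, $q^{*}=x_{0}-v$, $v^{2}=-\rho$ and $q^{*}q=x_{0}^{2}+\rho$. The plan is to expand $(q^{*})^{k}q^{n}$ in powers of $v$ and read off real parts via (\ref{EherRe}), proving each equality by two inclusions.

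\emph{Step 1: normal form.} Since $x_{0}$ is real and commutes with $v$, the binomial theorem applies. Splitting even and odd powers of $v$ and using $v^{2m}=(-\rho)^{m}$ gives
\begin{equation*}
(q^{*})^{k}q^{n}=P_{kn}+Q_{kn}\,v ,\qquad P_{kn},Q_{kn}\in \mathbb{R}[x_{0},\rho ].
\end{equation*}
Hence every element of $\mathbb{H}^{0}[q^{*},q]$ has the form $\sum a\,(P+Qv)\,b$ with $a,b\in \mathbb{H}$ and $P,Q\in \mathbb{R}[x_{0},\rho]$. Because $P$ and $Q$ are real-valued, $\mathrm{Re}(aPb)=P\,\mathrm{Re}(ab)$ and $\mathrm{Re}(aQvb)=Q\,\mathrm{Re}(avb)$. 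Writing $v=x_{1}\mathrm{i}+x_{2}\mathrm{j}+x_{3}\mathrm{k}$, we get $\mathrm{Re}(avb)=\sum_{l}x_{l}\,\mathrm{Re}(a e_{l} b)$ with $e_{1},e_{2},e_{3}=\mathrm{i},\mathrm{j},\mathrm{k}$, which is a real linear combination of $x_{1},x_{2},x_{3}$. By (\ref{EherRe}) this yields the inclusion ``$\subseteq$'' in (\ref{herpart1}).

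\emph{Step 2: the real subalgebra.} For $\mathbb{H}^{0}_{\mathbb{R}}[q^{*},q]$ the coefficients are real, so $\mathrm{Re}\big((q^{*})^{k}q^{n}\big)=P_{kn}$. This gives ``$\subseteq$'' in (\ref{herpart}). For the reverse inclusion, observe that $x_{0}=(q+q^{*})/2$ and $\rho=q^{*}q-x_{0}^{2}$ both lie in the commutative real $*$-algebra $\mathbb{H}^{0}_{\mathbb{R}}[q^{*},q]$ and are hermitian. Since this algebra is closed under products, $\mathbb{R}[x_{0},\rho]\subseteq \mathbb{H}^{0}_{\mathbb{R}}[q^{*},q]_{\mathrm{her}}$ (cf.\ Lemma~\ref{commutative}).

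\emph{Step 3: the reverse inclusion in (\ref{herpart1}).} This is the step I expect to be the main obstacle, because $\mathbb{H}^{0}[q^{*},q]$ is not an algebra, so I cannot simply multiply $x_{l}$ by a polynomial in $x_{0},\rho$. I must show that $x_{l}\,x_{0}^{a}\rho^{b}$ lies in $\mathbb{H}^{0}[q^{*},q]$ for all $a,b$.

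The first attempt is to exhibit, for each $N$, elements $Q\,v$ of $\mathbb{H}^{0}_{\mathbb{R}}[q^{*},q]$ whose coefficients $Q$ range over a spanning set of $\mathbb{R}[x_{0},\rho]$. Note that $\mathrm{Im}\big((q^{*})^{k}q^{n}\big)=Q_{kn}v$ lies in $\mathbb{H}^{0}_{\mathbb{R}}[q^{*},q]$ by $*$-invariance. Then, by (\ref{coordinates}), $x_{1}=\mathrm{Re}(-\mathrm{i}v)$ and more generally $x_{1}Q=\mathrm{Re}(-\mathrm{i}\,Qv)$, so $x_{1}Q$ is the real part of an element of $\mathbb{H}^{0}[q^{*},q]$, and similarly for $x_{2},x_{3}$.

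It therefore suffices to show that $\{Q_{kn}\}$ spans $\mathbb{R}[x_{0},\rho]$. I plan to argue via the sequence $Q_{0n}$ coming from $q^{n}$, multiplied by powers of $q^{*}q=x_{0}^{2}+\rho$ (which is allowed since $(q^{*})^{m}q^{m}\cdot q^{n}=(q^{*})^{m}q^{m+n}$). Concretely, $q^{n}=P_{n}+Q_{n}v$ with $Q_{n}=n x_{0}^{n-1}+(\text{terms containing }\rho)$. The polynomials $(x_{0}^{2}+\rho)^{m}Q_{n}$ have leading $x_{0}$-behaviour that can be organized by a triangularity argument in bidegree (degree in $x_{0}$, degree in $\rho$): the leading term of $Q_{n}$ in $x_{0}$ is $n x_{0}^{n-1}$, and multiplying by $(x_{0}^{2}+\rho)^{m}$ allows reaching $x_{0}^{a}\rho^{b}$ modulo lower terms. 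I will carry out an induction on total weighted degree $a+2b$. If the triangularity turns out to be awkward, the fallback is to use $\mathrm{Im}\big((q^{*})^{k}q^{n}\big)$ for general $k<n$. In that case, $(q^{*})^{k}q^{n}=(x_{0}^{2}+\rho)^{k}q^{n-k}$ reduces everything to products of $(x_{0}^{2}+\rho)^{k}$ with $Q_{j}$, and a dimension count in each weighted degree should finish it.
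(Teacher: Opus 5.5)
Your Steps 1 and 2 are essentially the paper's argument: the binomial expansion in $x_0$ and the imaginary part, then $x_0=(q+q^*)/2$ and $\rho=q^*q-x_0^2$ inside the commutative algebra $\mathbb{H}^0_{\mathbb{R}}[q^*,q]$. They are fine. \textbf{The problem is Step 3.} The spanning claim you reduce to is true, but neither of your arguments for it works as stated.

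\emph{The triangularity fails.} In weighted degree $d$ your generators are $G_m=(x_0^2+\rho)^m Q_{d+1-2m}$ for $0\le m\le\lfloor d/2\rfloor$. Every one of them has the same leading $x_0$-monomial $x_0^{d}$, with coefficient $d+1-2m$. They also all share the same top $\rho$-monomial. So there is no triangular structure in the monomials $x_0^a\rho^b$. For example, for $d=2$ you get $3x_0^2-\rho$ and $x_0^2+\rho$.

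\emph{The dimension count is incomplete.} It only shows that the number of generators equals the dimension $\lfloor d/2\rfloor+1$ of the weighted-homogeneous part. That proves nothing without linear independence, and you give no argument for it. It can be rescued: pass to the variables $x_0$ and $s=x_0^2+\rho$. There $Q_j(x_0,s-x_0^2)$ has $s$-free part $2^{j-1}x_0^{j-1}$, so $G_m$ has lowest $s$-term $2^{d-2m}x_0^{d-2m}s^m$, and the system becomes triangular. But none of this is in your proposal.

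More importantly, the obstacle you identify is not there. You only need products inside $\mathbb{H}^0_{\mathbb{R}}[q^*,q]$, which \emph{is} an algebra; the quaternion units are attached only on the outside. By your own Step 2, $\mathbb{R}[x_0,\rho]\subseteq\mathbb{H}^0_{\mathbb{R}}[q^*,q]$. Also $v=(q-q^*)/2\in\mathbb{H}^0_{\mathbb{R}}[q^*,q]$. Hence $Qv\in\mathbb{H}^0_{\mathbb{R}}[q^*,q]$ for \emph{every} $Q\in\mathbb{R}[x_0,\rho]$. Then $x_1Q=\mathrm{Re}(-\mathrm{i}\,Qv)$ finishes immediately, with no spanning lemma needed; $x_2,x_3$ are handled the same way.

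The paper does exactly this, in the form
$x_1 f=\frac12(q^*\mathrm{i}-\mathrm{i}q)f=\frac12(q^*f\,\mathrm{i}-\mathrm{i}\,qf)$ for $f\in\mathbb{R}[x_0,\rho]$. This uses that the real-valued $f$ commutes with $\mathrm{i}$ and that $q^*f,\,qf\in\mathbb{H}^0_{\mathbb{R}}[q^*,q]$.
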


\begin{proof}
We write $q=x_{0}+x_{I}$, where
$x_{I}=\mathrm{i}x_{1}+\mathrm{j}x_{2}+\mathrm{k}x_{3}$. Then
$q^{*}=x_{0}-x_{I}$.

Since $x_{0}$ and $x_{I}$ commute, the binomial theorem applies and we
obtain
\begin{align*}
(q^{*})^{k}q^{n} &= (x_{0}-x_{I})^{k}(x_{0}+x_{I})^{n}
\\
&=\left (\sum _{j=0}^{k}\binom{k}{j} x_{0}^{k-j}(-x_{I})^{j}\right )
\left (\sum _{\ell =0}^{n}\binom{n}{\ell} x_{0}^{n-\ell}x_{I}^{\ell}
\right )
\\
&= \sum _{j=0}^{k} \sum _{\ell =0}^{n} \binom{k}{j}\binom{n}{\ell} (-1)^{j}
x_{0}^{k+n-j-\ell} x_{I}^{j+\ell}.
\end{align*}

For $r\in \mathbb{N}$, we have
\begin{align*}
x_{I}^{2r}&=(-1)^{r}\|x_{I}\|^{2r} = (-1)^{r}(x_{1}^{2}+x_{2}^{2}+x_{3}^{2})^{r},
\\
x_{I}^{2r+1}&=(-1)^{r}\|x_{I}\|^{2r} x_{I} = (-1)^{r}(x_{1}^{2}+x_{2}^{2}+x_{3}^{2})^{r}
x_{I}.
\end{align*}
Hence, $\mathrm{Re}((q^{*})^{k}q^{n})$ is a real combination of terms
$x_{0}^{k+n-j-\ell}x_{I}^{j+\ell}$ with $j+\ell $ even, and
$\mathrm{Im}((q^{*})^{k}q^{n})$ is a real combination of terms
$x_{0}^{k+n-j-\ell}x_{I}^{j+\ell}$ with $j+\ell $ odd. Therefore,
\begin{align}
\label{re}
\mathrm{Re}((q^{*})^{k}q^{n}) & \in \mathbb{R}[x_{0},x_{I}^{2}]
\subseteq \mathbb{R}[x_{0},x_{1}^{2}+x_{2}^{2}+x_{3}^{2}],
\\
\mathrm{Im}((q^{*})^{k}q^{n}) & \in x_{I} \, \mathbb{R}[x_{0},x_{I}^{2}]
\subseteq x_{I} \, \mathbb{R}[x_{0},x_{1}^{2}+x_{2}^{2}+x_{3}^{2}].
\label{im}
\end{align}
In particular, from (\ref{EherRe}) and (\ref{re}) we obtain
\begin{align}
\label{hrinclusion}
\mathbb{H}^{0}_{\mathbb R}[q^{*}, q]_{\mathrm{her}} \subseteq
\mathbb{R}[x_{0},x_{1}^{2}+x_{2}^{2}+x_{3}^{2}].
\end{align}

Now we prove the converse inclusion in (\ref{hrinclusion}). Let
$p,s\in \mathbb{N}$. Then
\begin{align}
x_{0}^{p} & = \frac{1}{2^{p}}\, (q+q^{*})^{p} = \frac{1}{2^{p}} \sum _{k=0}^{p}
\binom{p}{k} q^{k} (q^{*})^{p-k},
\label{x0}
\\
(x_{1}^{2}+x_{2}^{2}+x_{3}^{2})^{s} & = (q^{*}q - x_{0}^{2})^{s} =
\sum _{j=0}^{s} \binom{s}{j} (-1)^{s-j} (q^{*})^{j} q^{j} (x_{0}^{2})^{s-j}.
\label{xsum}
\end{align}
(Note that the binomial theorem applies, because $q,q^{*}$ and
$q^{*}q, x_{0}^{2}$ commute.) By (\ref{x0}), $(x_{0}^{2})^{s-j}$ is a real
combination of terms $(q^{*})^{k}q^{n}$. Therefore, since
$\mathbb{H}^{0}_{\mathbb R}[q^{*}, q]$ is a commutative algebra, it follows
from (\ref{x0}) and (\ref{xsum}) that the product
$x_{0}^{p}(x_{1}^{2}+x_{2}^{2}+x_{3}^{2})^{s}$ is a real combination of
terms $(q^{*})^{k}q^{n}$, so it is contained in
$ \mathbb{H}^{0}_{\mathbb{R}}[q^{*}, q]$. Because it is real-valued on
$\mathbb{H}$,
$x_{0}^{p}(x_{1}^{2}+x_{2}^{2}+x_{3}^{2})^{s} \in \mathbb{H}^{0}_{
\mathbb R}[q^{*}, q]_{\mathrm{her}}$. Hence
$ \mathbb{R}[x_{0},x_{1}^{2}+x_{2}^{2}+x_{3}^{2}]\subseteq \mathbb{H}^{0}_{
\mathbb R}[q^{*}, q]_{\mathrm{her}} $. Combined with (\ref{hrinclusion}),
this proves (\ref{herpart}).

Now we turn to the proof of (\ref{herpart1}). For short, we denote the
set at the right-hand side of (\ref{herpart1}) by $\mathcal{R}$. Then the
assertion (\ref{herpart1}) means that
$ \mathbb{H}^{0}_{\mathbb R}[q^{*}, q]_{\mathrm{her}}=\mathcal{R}$.

First we show that
$ \mathbb{H}^{0}[q^{*}, q]_{\mathrm{her}}\subseteq \mathcal{R}$. By (\ref{EherRe})
it suffices to prove that
\begin{align}
\label{rekn}
\mathrm{Re} (\tau [ (q^{*})^{k}q^{n} ]\sigma ) \in \mathcal{R}\quad
\text{ for all} ~~~ k,n\in \mathbb{N}~~~\text{and}~~\tau , \sigma \in
\{1,i,j,k\}.
\end{align}
Since
$\mathrm{Re}[(q^{*})^{k}q^{n}] \in \mathbb{R}[x_{0},x_{1}^{2}+x_{2}^{2}+x_{3}^{2}]$
as noted above, we conclude that
\begin{align}
\label{reqkn}
\mathrm{Re} (\tau \, \mathrm{Re} [ (q^{*})^{k}q^{n} ]\,\sigma )=
\mathrm{Re} [ (q^{*})^{k}q^{n} ]\, \cdot \, \mathrm{Re} (\tau \sigma )
\in \mathcal{R}.
\end{align}
For the imaginary part we have
$\mathrm{Im}[(q^{*})^{k}q^{n}] \in x_{I} \, \mathbb{R}[x_{0},x_{1}^{2}+x_{2}^{2}+x_{3}^{2}]$
by (\ref{im}). Hence
\begin{align}
\label{imqkn1}
\mathrm{Re} (\tau \, \mathrm{Im} \, [ (q^{*})^{k}q^{n} ]\, \sigma )=
\mathbb{R}[x_{0},x_{1}^{2}+x_{2}^{2}+x_{3}^{2}]\, \cdot \,
\mathrm{Re} (\tau x_{I} \sigma ).
\end{align}
Since
$\tau x_{I} \sigma =\tau i \sigma x_{1}+ \tau j\sigma x_{2}+ \tau k
\sigma x_{3}$, $\mathrm{Re} (\tau x_{I} \sigma )$ is a real linear combination
of $x_{1},x_{2},x_{3}$. Thus it follows from (\ref{imqkn1}) that
\begin{align}
\label{imqkn}
\mathrm{Re} (\tau \, \mathrm{Im} [ (q^{*})^{k}q^{n} ]\,\sigma )\in
\mathcal{R}.
\end{align}
Clearly, (\ref{reqkn}) and (\ref{imqkn}) imply (\ref{rekn}) which proves
that $ \mathbb{H}^{0}[q^{*}, q]_{\mathrm{her}}\subseteq \mathcal{R}$.

Now we verify the opposite inclusion
$\mathcal{R}\subseteq \mathbb{H}^{0}[q^{*}, q]_{\mathrm{her}}$. Suppose
that $f\in \mathbb{R}[x_{0},x_{1}^{2}+x_{2}^{2}+x_{3}^{2}]$. Set
$g:=\frac{1}{2}(q^{*}\mathrm{i}-\mathrm{i}q) f$. Since
$f\in \mathbb{H}_{\mathbb{R}}[q^{*}, q]_{\mathrm{her}}$ by (\ref{herpart}),
we have $g\in \mathbb{H}^{0}[q^{*}, q]$ and
$\mathrm{Re} ~ g= x_{1} f$. This shows that
\begin{equation*}
x_{1} \mathbb{R}[x_{0},x_{1}^{2}+x_{2}^{2}+x_{3}^{2}] \subseteq
\mathbb{H}^{0}[q^{*}, q]_{\mathrm{her}}.
\end{equation*}
Replacing $x_{1}= \frac{1}{2}(q^{*}\mathrm{i}-\mathrm{i}q)$ by
$x_{2}= \frac{1}{2}(q^{*}\mathrm{j}- \mathrm{j}q) $ and
$x_{3}= \frac{1}{2}(q^{*}\mathrm{k}-\mathrm{k}q) $, respectively, the same
reasoning yields
\begin{equation*}
x_{2} \mathbb{R}[x_{0},x_{1}^{2}+x_{2}^{2}+x_{3}^{2}] \subseteq
\mathbb{H}^{0}[q^{*}, q]_{\mathrm{her}}, ~~~ x_{3} \mathbb{R}[x_{0},x_{1}^{2}+x_{2}^{2}+x_{3}^{2}]
\subseteq \mathbb{H}^{0}[q^{*}, q]_{\mathrm{her}}.
\end{equation*}
Therefore,
$\mathcal{R}\subseteq \mathbb{H}^{0}[q^{*}, q]_{\mathrm{her}}$. Thus, together
with the preceding paragraph we have shown that equation (\ref{herpart1})
holds. This completes the proof of Theorem~\ref{realpartH_L}.
\end{proof}

\begin{remark}
\label{rem4.6}
1. The preceding computations for
$ \mathbb{H}^{0}[q^{*}, q]_{\mathrm{her}}$ go also through in the complex
case, but then for $z=x_{0}+\mathrm{i}x_{1}$ we have
\begin{equation*}
\mathbb{R}[x_{0},x_{1}^{2}]+ x_{1}\mathbb{R}[x_{0},x_{1}^{2}]=
\mathbb{R}[x_{0},x_{1}].
\end{equation*}
That is, in the complex case we obtain the whole polynomial algebra
$\mathbb{R}[x_{0},x_{1}]$.

2. The hermitian part
$\mathbb{H}_{\mathbb{R}}[q^{*}, q]_{\mathrm{her}} =\mathbb{R}[x_{0},x_{1}^{2}+x_{2}^{2}+x_{3}^{2}]$
is an important ingredient for the study of the quaternionic moment problem.
But the polynomial space $\mathbb{H}_{\mathbb R}[q^{*}, q]$ is too small
to treat the quaternionic moment problem with our methods. Since
$\mathbb{H}_{\mathbb R}[q^{*}, q]$ is not an $\mathbb{H}$-linear space,
the quaternionic Choquet Theorem~\ref{quatchoquet} does not apply to
$\mathbb{H}_{\mathbb R}[q^{*}, q]$.
\end{remark}

\subsection{The intermediate quaternionic polynomial spaces}
\label{sec4.2}

\begin{definition}%
\label{defihk}
For $k\in \mathbb{N}$ we define
\begin{equation*}
\begin{array}{r@{\quad}l}
\mathbb{H}^{k}[q^{*}, q] =& \mathrm{span}_{\mathbb{H}} \, \{(q^{*})^{m_{0}}
q^{n_{0}} \sigma _{1}(q^{*})^{m_{1}} q^{n_{1}}\cdots (q^{*})^{m_{k-1}}
q^{n_{k-1}}\sigma _{k}(q^{*})^{m_{k}} q^{n_{k}}:
\\
& \quad \quad \quad \, \, m_{0},n_{0},\dots ,m_{k},n_{k}\in
\mathbb{N}_{0}\,, \sigma _{1},\dots ,\,\sigma _{k}\in \Sigma \}.
\end{array}
\end{equation*}
Recall that $\mathrm{{span}_{\mathbb{H}}}$ means the spanned two-sided
$\mathbb{H}$-linear space.
\end{definition}
In particular,
\begin{equation*}
\begin{array}{l@{\quad}l}
\mathbb{H}^{1}[q^{*}, q] = & \Big\{ \sum _{j,k,l,m,n} a_{jklmn}\, (q^{*})^{k}
q^{l} \sigma (q^{*})^{m} q^{n}\, b_{jklmn}:
\\
& \quad \, j,k,l,m, n \in \mathbb{N}_{0}, \, a_{jklmn}, b_{jklmn}\in
\mathbb{H},\, \sigma \in \Sigma \Big\}.
\end{array}
\end{equation*}

\begin{proposition}
\label{prop4.8}
For $k\in \mathbb{N}$ the following multiplicativity property holds:
\begin{align}
\label{multprop}
\mathbb{H}^{k}[q^{*}, q] ={\mathrm{span}}_{\mathbb{R}}\, \{\mathbb{H}^{k-1}[q^{*},
q] \cdot \mathbb{H}^{0}[q^{*}, q]\}.
\end{align}
(More precisely, $\mathbb{H}^{k}[q^{*}, q]$ is the set of finite sums of
products $f\cdot g$, where $f\in \mathbb{H}^{k-1}[q^{*}, q]$ and
$g\in \mathbb{H}^{0}[q^{*}, q]$.)
\end{proposition}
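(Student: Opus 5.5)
We prove the two inclusions separately, working directly from Definition~\ref{defihk} and Definition~\ref{defih0}. To keep notation short, write $w(m,n):=(q^{*})^{m}q^{n}$ for the basic monomials. An element of $\mathbb{H}^{k}[q^{*},q]$ is then a finite sum of terms
\[
a\, w(m_0,n_0)\sigma_1 w(m_1,n_1)\cdots \sigma_k w(m_k,n_k)\, b , \qquad a,b\in\mathbb{H},\ \sigma_i\in\Sigma .
\]
We treat the case $k=1$ uniformly with the rest by reading $\mathbb{H}^{0}[q^{*},q]$ as the span of the words with no $\sigma$'s, which is exactly Definition~\ref{defih0}.

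\emph{Inclusion} ``$\subseteq$''. Take a generator $a\,W\,b$ with $W$ the word above. Split it as $f\cdot g$, where
\[
f:=a\, w(m_0,n_0)\sigma_1\cdots \sigma_{k-1} w(m_{k-1},n_{k-1}) \in \mathbb{H}^{k-1}[q^{*},q]
\]
and
\[
g:=\sigma_k\, w(m_k,n_k)\, b \in \mathbb{H}^{0}[q^{*},q].
\]
The membership of $g$ holds because $\sigma_k\in\mathbb{H}$ and $\mathbb{H}^{0}[q^{*},q]$ is a two-sided $\mathbb{H}$-module. Since every element of $\mathbb{H}^{k}[q^{*},q]$ is a finite sum of such generators, it is a finite sum of products $f\cdot g$ of the required type.

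\emph{Inclusion} ``$\supseteq$''. It suffices to show $f\cdot g\in\mathbb{H}^{k}[q^{*},q]$ for generators, by bilinearity of the pointwise product over $\mathbb{R}$ together with distributivity. So take
\[
f=a\,W'\,b,\qquad g=c\, w(m,n)\, d,
\]
where $W'$ is a $(k-1)$-word. Expand the quaternion $bc=\sum_{\sigma\in\Sigma}\lambda_\sigma \sigma$ with $\lambda_\sigma\in\mathbb{R}$. Real scalars commute with all functions, so
\[
f g=\sum_{\sigma\in\Sigma}\lambda_\sigma\, a\,\bigl(W'\sigma\, w(m,n)\bigr)\, d .
\]
Each $W'\sigma\, w(m,n)$ is a word of length $k$ in the sense of Definition~\ref{defihk}. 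Hence $fg$ lies in the two-sided $\mathbb{H}$-span, i.e.\ in $\mathbb{H}^{k}[q^{*},q]$.

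The only step requiring any care is this last reduction: the inner scalar $bc$ must be decomposed into real multiples of elements of $\Sigma$, since the letters $\sigma_i$ in Definition~\ref{defihk} are restricted to $\Sigma$ rather than arbitrary quaternions. The real-linearity of the construction, and the fact that real numbers commute with everything, handle it. Finally, we note that the ``span over $\mathbb{R}$'' in (\ref{multprop}) coincides with the set of finite sums, because real multiples can be absorbed into $f$.
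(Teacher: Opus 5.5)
Your proposal is correct and follows essentially the same route as the paper. For one inclusion you split off the last factor $\sigma_k (q^{*})^{m_k}q^{n_k} b$ as the $\mathbb{H}^{0}$-part; for the other you expand the inner quaternion scalar in the basis $1,\mathrm{i},\mathrm{j},\mathrm{k}$. The only difference is that the paper expands $b$ and $c$ separately, whereas you expand the product $bc$ directly, which is marginally cleaner.
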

\begin{proof}
Each element of $ \mathbb{H}^{k}[q^{*}, q] $ is a finite sum of terms
\begin{equation*}
[a(q^{*})^{m_{0}} q^{n_{0}} \sigma _{1}(q^{*})^{m_{1}} q^{n_{1}}
\cdots (q^{*})^{m_{k-1}} q^{n_{k-1}}]\cdot [\sigma _{k}(q^{*})^{m_{k}}
q^{n_{k}}b],
\end{equation*}with $a,b\in \mathbb{H}$. Such a summand is contained in
$ \mathbb{H}^{k-1}[q^{*}, q] \cdot \mathbb{H}^{0}[q^{*}, q]$.

Conversely, elements of
$ \mathbb{H}^{k-1}[q^{*}, q] \cdot \mathbb{H}^{0}[q^{*}, q]$ are finite
sums of
\begin{equation*}
[a(q^{*})^{m_{0}} q^{n_{0}} \sigma _{1}(q^{*})^{m_{1}} q^{n_{1}}
\cdots (q^{*})^{m_{k-1}} q^{n_{k-1}}b]\cdot [c (q^{*})^{m_{k}} q^{n_{k}}d],
\end{equation*}
with $a,b,c,d\in \mathbb{H}$. Writing $b$ and $c$ as real combinations
of $1,\mathrm{i},\mathrm{j},\mathrm{k}$ it follows that such a polynomial
belongs to $\mathbb{H}^{k}[q^{*}, q]$.
\end{proof}
\begin{lemma}%
\label{frep}
Let $k\in \mathbb{N}$. Each $f\in \mathbb{H}^{k}[q^{*},q]$ has a unique
representation
\begin{align}
\label{frepresentation}
f=f_{0}+\mathrm{i}f_{1}+\mathrm{j}f_{2}+\mathrm{k}f_{3},
\end{align}
with
$f_{0}, f_{1},f_{2},f_{3}\in \mathbb{H}^{k}[q^{*},q]_{\mathrm{her}}$ and
we have
\begin{align}
\label{dechk}
\mathbb{H}^{k}[q^{*},q]=\mathbb{H}^{k}[q^{*},q]_{\mathrm{her}}+
\mathrm{i}\, \mathbb{H}^{k}[q^{*},q]_{\mathrm{her}}+\mathrm{j}\,
\mathbb{H}^{k}[q^{*},q]_{\mathrm{her}}+\mathrm{k}\, \mathbb{H}^{k}[q^{*},q]_{
\mathrm{her}}.
\end{align}
\end{lemma}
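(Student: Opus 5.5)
Existence and uniqueness will be handled separately, reusing the decomposition already introduced in the proof of Proposition~\ref{extensionherHH}. Let $E:=\mathbb{H}^{k}[q^{*},q]$. First we check that $E$ is a $*$-invariant two-sided $\mathbb{H}$-linear subspace of $C(\mathbb{H};\mathbb{H})$. Two-sided $\mathbb{H}$-linearity holds by Definition~\ref{defihk}, since $E$ is a $\mathrm{span}_{\mathbb{H}}$. For $*$-invariance, it suffices to treat a generator. Its adjoint is
\begin{equation*}
(q^{*})^{n_{k}} q^{m_{k}}\sigma_{k}^{*}(q^{*})^{n_{k-1}}q^{m_{k-1}}\cdots \sigma_{1}^{*}(q^{*})^{n_{0}}q^{m_{0}},
\end{equation*}
where we have used $((q^{*})^{m}q^{n})^{*}=(q^{*})^{n}q^{m}$, which holds because $q$ and $q^{*}$ commute. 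Since $\sigma^{*}=\pm\sigma$ for $\sigma\in\Sigma$, this is $\pm$ a generator of the same shape. The adjoint of $a g b$ is $b^{*}g^{*}a^{*}$, so $E^{*}\subseteq E$.

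For existence, given $f\in E$ we set, as in the proof of Proposition~\ref{extensionherHH},
\begin{equation*}
f_{0}:=\tfrac{1}{2}(f+f^{*}),\quad f_{1}:=\tfrac12(-\mathrm{i}f+f^{*}\mathrm{i}),\quad f_{2}:=\tfrac12(-\mathrm{j}f+f^{*}\mathrm{j}),\quad f_{3}:=\tfrac12(-\mathrm{k}f+f^{*}\mathrm{k}).
\end{equation*}
Each of these lies in $E$ by the two-sided linearity and $*$-invariance just established. Each is also hermitian: for $\tau\in\{\mathrm{i},\mathrm{j},\mathrm{k}\}$ we have $(-\tau f+f^{*}\tau)^{*}=-f^{*}\tau^{*}+\tau^{*}f=f^{*}\tau-\tau f$, using $\tau^{*}=-\tau$. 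Hence $f_{0},\dots,f_{3}\in E_{\mathrm{her}}$.

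Next we verify $f=f_{0}+\mathrm{i}f_{1}+\mathrm{j}f_{2}+\mathrm{k}f_{3}$ by direct computation, using $\tau^{2}=-1$. We obtain
\begin{equation*}
\mathrm{i}f_{1}+\mathrm{j}f_{2}+\mathrm{k}f_{3}=\tfrac12\bigl(3f+\mathrm{i}f^{*}\mathrm{i}+\mathrm{j}f^{*}\mathrm{j}+\mathrm{k}f^{*}\mathrm{k}\bigr).
\end{equation*}
Applying the conjugation formula (\ref{conjugationformula}) pointwise to $f^{*}(q)$, we get $\mathrm{i}f^{*}\mathrm{i}+\mathrm{j}f^{*}\mathrm{j}+\mathrm{k}f^{*}\mathrm{k}=-2f-f^{*}$. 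The sum therefore equals $\tfrac12(f-f^{*})$, and adding $f_{0}$ gives $f$. This proves (\ref{frepresentation}), and (\ref{dechk}) follows, with the inclusion ``$\supseteq$'' being clear from $\mathbb{H}$-linearity.

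For uniqueness, suppose $0=g_{0}+\mathrm{i}g_{1}+\mathrm{j}g_{2}+\mathrm{k}g_{3}$ with each $g_{j}\in E_{\mathrm{her}}$. By (\ref{chareher}) the $g_{j}$ are real-valued. Evaluating at any $q$ and using that $1,\mathrm{i},\mathrm{j},\mathrm{k}$ are $\mathbb{R}$-linearly independent in $\mathbb{H}$ gives $g_{j}(q)=0$ for all $q$, so every $g_{j}=0$. The only step needing care is the $*$-invariance of the products of generators. It hinges on the commutation of $q$ and $q^{*}$ and on $\sigma^{*}=\pm\sigma$, and the reversal of the order of factors is harmless.
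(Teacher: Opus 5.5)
Your proof is correct and is essentially the paper's argument: the paper uses the same components, written as $\frac{1}{2}(-f\tau+\tau f^{*})$, which coincide pointwise with your $\frac{1}{2}(-\tau f+f^{*}\tau)$ (both equal $\mathrm{Re}(-\tau f)=\mathrm{Re}(-f\tau)$). Like you, it verifies the identity via the conjugation formula applied to $f(q)^{*}$, and it gets uniqueness by re-inserting the representation into these formulas, which amounts to your pointwise use of the real-valuedness of hermitian elements; your explicit check of the $*$-invariance of $\mathbb{H}^{k}[q^{*},q]$ fills in what the paper treats as clear, though $((q^{*})^{m}q^{n})^{*}=(q^{*})^{n}q^{m}$ already follows from $(fg)^{*}=g^{*}f^{*}$ alone, without the commutation of $q$ and $q^{*}$.
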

\begin{proof}
For $f\in \mathbb{H}^{k}[q^{*},q]$ we define
\begin{align*}
f_{0}=\frac{f+f^{*}}{2}, ~ f_{1}=
\frac{-f\mathrm{i}+\mathrm{i}f^{*}}{2}, ~f_{2}=
\frac{-f\mathrm{j}+ \mathrm{j}f^{*}}{2},~ f_{3}=
\frac{-f\mathrm{k}+ \mathrm{k}f^{*}}{2}.
\end{align*}
Clearly, each $f_{j}$ is in $\mathbb{H}^{k}[q^{*},q]_{\mathrm{her}}$. A
simple computation using the conjugation formula, applied to
$f(q)^{*}$, shows that equation (\ref{frepresentation}) holds.

Conversely, inserting (\ref{frepresentation}) into
$\frac{f+f^{*}}{2}$, $\frac{-f\mathrm{i}+\mathrm{i}f^{*}}{2}$, $\frac{-f\mathrm{j}+ \mathrm{j}f^{*}}{2}$,  $\frac{-f\mathrm{k}+ \mathrm{k}f^{*}}{2}$  we obtain
$f_{0}, f_{1}, f_{2}, f_{3}$, respectively. This proves the uniqueness
of the representation (\ref{frepresentation}).

Obviously, (\ref{frepresentation}) implies (\ref{dechk}).
\end{proof}
\begin{lemma}
\label{lem4.10}
For $k\in \mathbb{N}$,
\begin{align}
\label{multher}
\mathbb{H}^{k}[q^{*},q]_{\mathrm{her}}=\mathrm{span}_{\mathbb{R}}\,
\{\mathbb{H}^{k-1}[q^{*},q]_{\mathrm{her}}\cdot \mathbb{H}^{0}[q^{*},q]_{
\mathrm{her}} \, \}
\end{align}
\end{lemma}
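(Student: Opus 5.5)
Both inclusions follow from three earlier results: the multiplicativity property of Proposition~\ref{prop4.8}, the decomposition of Lemma~\ref{frep}, and the fact that $f\mapsto \mathrm{Re} f$ maps a $*$-invariant space onto its hermitian part, (\ref{EherRe}).

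\emph{Inclusion ``$\supseteq$''.} Take $f\in \mathbb{H}^{k-1}[q^{*},q]_{\mathrm{her}}$ and $g\in \mathbb{H}^{0}[q^{*},q]_{\mathrm{her}}$. By Proposition~\ref{prop4.8}, $fg\in \mathbb{H}^{k}[q^{*},q]$. Both functions are real-valued by (\ref{chareher}), so $fg$ is real-valued and therefore lies in $\mathbb{H}^{k}[q^{*},q]_{\mathrm{her}}$. The hermitian part is a real vector space, so real spans of such products also lie in it.

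\emph{Inclusion ``$\subseteq$''.} Let $h\in \mathbb{H}^{k}[q^{*},q]_{\mathrm{her}}$.
\begin{enumerate}
\item By Proposition~\ref{prop4.8}, $h=\sum_r f_r g_r$ with $f_r\in \mathbb{H}^{k-1}[q^{*},q]$ and $g_r\in \mathbb{H}^{0}[q^{*},q]$.
\item Apply the decomposition (\ref{frepresentation}) to each factor: $f_r=\sum_{\alpha}\sigma_\alpha f_{r,\alpha}$ and $g_r=\sum_\beta \sigma_\beta g_{r,\beta}$, where $\sigma_\alpha,\sigma_\beta\in\Sigma$ and the components $f_{r,\alpha}$, $g_{r,\beta}$ are hermitian. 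For $k-1\ge 1$ this is Lemma~\ref{frep}; for $\mathbb{H}^0$ it is the same formula, which is valid by the proof of Proposition~\ref{extensionherHH}.
\item Hermitian functions are real-valued, so they commute pointwise with quaternion constants. Hence
\[
f_r g_r=\sum_{\alpha,\beta}\sigma_\alpha\sigma_\beta\, f_{r,\alpha}g_{r,\beta}.
\]
\item Since $h=h^{*}$, we have $h=\mathrm{Re}\, h$. Each product $f_{r,\alpha}g_{r,\beta}$ is real-valued and $\mathrm{Re}$ is real linear, so
\[
h=\sum_{r,\alpha,\beta}\mathrm{Re}(\sigma_\alpha\sigma_\beta)\, f_{r,\alpha}g_{r,\beta}.
\]
\item Each coefficient $\mathrm{Re}(\sigma_\alpha\sigma_\beta)$ is a real number (in fact $0$ or $\pm1$). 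So $h$ is a real combination of products of an element of $\mathbb{H}^{k-1}[q^{*},q]_{\mathrm{her}}$ with an element of $\mathbb{H}^{0}[q^{*},q]_{\mathrm{her}}$, which gives ``$\subseteq$''.
\end{enumerate}

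\emph{Main point to check.} The only delicate step is justifying that the components of an element of $\mathbb{H}^{k-1}$ or $\mathbb{H}^{0}$ stay in the same space. This holds because the component formulas in Lemma~\ref{frep} use only $f$, $f^{*}$ and two-sided multiplication by constants, and the spaces are $*$-invariant (invariance of $\mathbb{H}^{k}[q^{*},q]$ follows since the $*$ of a generator is again a generator of the same form). The key observation for commuting scalars past factors is that hermitian functions are real-valued, by (\ref{chareher}).
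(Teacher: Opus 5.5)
Your proposal is correct and follows essentially the paper's proof: both inclusions use Proposition~\ref{prop4.8} and (\ref{EherRe}), and for ``$\subseteq$'' both factors are split with Lemma~\ref{frep}, so that $\mathrm{Re}(fg)=f_{0}g_{0}-f_{1}g_{1}-f_{2}g_{2}-f_{3}g_{3}$; the reverse inclusion is the same real-valuedness argument. Lemma~\ref{frep} is stated only for $k\in\mathbb{N}$, yet the decomposition is also needed for $\mathbb{H}^{0}[q^{*},q]$, and you justify it by the formula from Proposition~\ref{extensionherHH}, which fills a small point the paper leaves implicit.
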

\begin{proof}
From the multiplicativity property (\ref{multprop}) and (\ref{EherRe})
we obtain
\begin{align}
\label{producther}
\mathbb{H}^{k}[q^{*},q]_{\mathrm{her}}={\mathrm{span}}_{\mathbb{R}}\,
\big\{ \mathrm{Re}(f\cdot g): f\in \mathbb{H}^{k-1}[q^{*},q], g\in
\mathbb{H}^{0}[q^{*},q] \, \big\}.
\end{align}
Let $ f\in \mathbb{H}^{k-1}[q^{*},q] $ and
$ g\in \mathbb{H}^{0}[q^{*},q] $. Using Lemma~\ref{frep} we can write
$f=f_{0}+\mathrm{i}f_{1}+\mathrm{j}f_{2}+\mathrm{k}f_{3}$ and
$g=g_{0}+\mathrm{i}g_{1}+\mathrm{j}g_{2}+\mathrm{k}g_{3}$ with
$ f_{j}\in \mathbb{H}^{k-1}[q^{*},q]_{\mathrm{her}} $ and
$ g_{j}\in \mathbb{H}^{0}[q^{*},q]_{\mathrm{her}} $. Then
\begin{equation*}
\mathrm{Re}(f\cdot g)= f_{0}g_{0}-f_{1}g_{1}-f_{2}g_{2}-f_{3}g_{3}
\in \mathrm{span}_{\mathbb{R}}\, \{\mathbb{H}^{k-1}[q^{*},q]_{
\mathrm{her}}\cdot \mathbb{H}^{0}[q^{*},q]_{\mathrm{her}} \, \}.
\end{equation*}
By (\ref{producther}) this proves that
$ \mathbb{H}^{k}[q^{*},q]_{\mathrm{her}}\subseteq \mathrm{span}_{
\mathbb{R}}\, \{\mathbb{H}^{k-1}[q^{*},q]_{\mathrm{her}}\cdot
\mathbb{H}^{0}[q^{*},q]_{\mathrm{her}} \}$.

To verify the opposite inclusion let
$ f\in \mathbb{H}^{k-1}[q^{*},q]_{\mathrm{her}}$ and
$ g\in \mathbb{H}^{0}[q^{*},q]_{\mathrm{her}}$. Then
$fg=\mathrm{Re}[ (f+\mathrm{i}\, 0 +\mathrm{j}\, 0+\mathrm{k}\, 0)(g+
\mathrm{i}\,0+\mathrm{j}\, 0+\mathrm{k}\,0)]\in \mathbb{H}^{k}[q^{*},q]_{
\mathrm{her}}$. This completes the proof of the equality (\ref{producther}).
\end{proof}

For
$\alpha =(\alpha _{1},\alpha _{2},\alpha _{3})\in \mathbb{N}_{0}^{3}$ we
define $|\alpha |:=\alpha _{1}+\alpha _{2}+\alpha _{3}$ and
\begin{align}
x^{\alpha }= x_{1}^{\alpha _{1}}\, x_{2}^{\alpha _{2}}\, x_{3}^{\alpha _{3}},
\label{eq41}
\end{align}
where $x_{j}^{0}:=1$. Note that $|\alpha |$ is the degree of the monomial
$x^{\alpha}$. The following theorem describes the hermitian part of the
polynomial space $\mathbb{H}^{k}[q^{*},q]$.
\begin{theorem}%
\label{herhk}
For $k\in \mathbb{N}_{0}$, we have
\begin{align}
\label{hhherk}
\mathbb{H}^{k}[q^{*},q]_{\mathrm{her}}=\sum _{\alpha \in \mathbb{N}_{0}^{3},
|\alpha |\leq k+1} x^{\alpha}\, \mathbb{R}[x_{0},x_{1}^{2}+x_{2}^{2}+x_{3}^{3}].
\end{align}
\end{theorem}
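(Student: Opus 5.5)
Write $P:=\mathbb{R}[x_{0},x_{1}^{2}+x_{2}^{2}+x_{3}^{2}]$ (the exponent $x_3^3$ in the statement is evidently a typo for $x_3^2$) and $V_{m}:=\sum_{|\alpha|\leq m} x^{\alpha}P$. The plan is induction on $k$, using Lemma~\ref{lem4.10} to move from $k-1$ to $k$.

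\emph{Base case $k=0$.} Theorem~\ref{realpartH_L} gives $\mathbb{H}^{0}[q^{*},q]_{\mathrm{her}}=P+x_{1}P+x_{2}P+x_{3}P$. This is exactly $V_{1}$, since the only $\alpha$ with $|\alpha|\leq 1$ are $0,e_{1},e_{2},e_{3}$.

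\emph{Induction step.} Assume $\mathbb{H}^{k-1}[q^{*},q]_{\mathrm{her}}=V_{k}$. By Lemma~\ref{lem4.10},
\[
\mathbb{H}^{k}[q^{*},q]_{\mathrm{her}}=\mathrm{span}_{\mathbb{R}}\{V_{k}\cdot V_{1}\},
\]
so it suffices to show $\mathrm{span}_{\mathbb{R}}\{V_{k}\cdot V_{1}\}=V_{k+1}$. Both spaces are $P$-modules and $P$ is a commutative ring. Hence the left side is spanned by products $x^{\alpha}x^{\beta}p$ with $|\alpha|\leq k$, $|\beta|\leq 1$, $p\in P$. Each such product is $x^{\alpha+\beta}p$ with $|\alpha+\beta|\leq k+1$, which gives $\subseteq$. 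Conversely, take any $\gamma$ with $|\gamma|\leq k+1$. If $\gamma=0$, write $x^{\gamma}=1\cdot 1$. Otherwise $\gamma_{i}\geq 1$ for some $i$, and $x^{\gamma}=x^{\gamma-e_{i}}\cdot x_{i}$ with $|\gamma-e_{i}|\leq k$. Multiplying by $p\in P$ keeps the product inside $V_{k}\cdot V_{1}$, which gives $\supseteq$.

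\emph{Main obstacle.} The only real issue is confirming that Lemma~\ref{lem4.10} is valid as stated, in particular for $k=1$, where it needs Lemma~\ref{frep} for $\mathbb{H}^{0}$. That decomposition follows for $\mathbb{H}^{0}$ by the same proof, because $\mathbb{H}^{0}[q^{*},q]$ is $*$-invariant and two-sided $\mathbb{H}$-linear. One also needs the inclusion $\mathrm{span}_{\mathbb{R}}\{V_{k}\cdot V_{1}\}\subseteq \mathbb{H}^{k}[q^{*},q]_{\mathrm{her}}$, which is the second half of the proof of Lemma~\ref{lem4.10}: a product of hermitian elements of $\mathbb{H}^{k-1}$ and $\mathbb{H}^{0}$ lies in $\mathbb{H}^{k}$ by Proposition~\ref{prop4.8} and is real-valued. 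With these in place the induction closes and proves (\ref{hhherk}) for all $k\in\mathbb{N}_{0}$.
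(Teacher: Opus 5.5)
Your proposal is correct and follows the paper's own proof: induction on $k$, with Theorem~\ref{realpartH_L} as the base case and Lemma~\ref{lem4.10} for the inductive step. You also write out the identity $\mathrm{span}_{\mathbb{R}}\{V_{k}\cdot V_{1}\}=V_{k+1}$, which the paper compresses into ``multiplying'' the two descriptions, and your remarks about the typo $x_{3}^{3}$ and about needing Lemma~\ref{frep} for $\mathbb{H}^{0}[q^{*},q]$ are accurate and do not affect the argument.
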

\begin{proof}
The proof is by induction on $k$. For this proof we abbreviate
$\mathcal{R}:= \mathbb{R}[x_{0},x_{1}^{2}+x_{2}^{2}+x_{3}^{3}]$. For
$k=0$ the assertion
\begin{align}
\label{case0}
\mathbb{H}^{0}[q^{*},q]_{\mathrm{her}}=\mathcal{R}+x_{1}\mathcal{R}+x_{2}
\mathcal{R}+x_{3}\mathcal{R}
\end{align}
was stated as Theorem~\ref{realpartH_L}.

Assume that (\ref{hhherk}) holds for $k$. Then
\begin{align}
\label{casek}
\mathbb{H}^{k}[q^{*},q]_{\mathrm{her}}=\sum _{\alpha \in \mathbb{N}_{0}^{3},
|\alpha |\leq k+1} x^{\alpha}\, \mathcal{R}.
\end{align}
Multiplying (\ref{casek}) by (\ref{case0}) and using (\ref{multher}) for
$k+1$ gives the assertion (\ref{hhherk}) for $k+1$.
\end{proof}

\subsection{The full quaternionic polynomial space}
\label{sec4.3}

An immediate consequence of Definitions~\ref{defih0} and \ref{defihk} is
\begin{align}
\label{hkinclusion}
\mathbb{H}^{k}[q^{*},q]\subseteq \mathbb{H}^{k+1}[q^{*},q], \quad k
\in \mathbb{N}_{0}.
\end{align}
\begin{definition}%
\label{defihi}
\begin{equation*}
\begin{array}{r@{\quad}l}
\mathbb{H}^{\infty}[q^{*},q]&= \, \bigcup _{k=0}^{\infty}\mathbb{H}^{k}[q^{*},q]
\\
&= \, \mathrm{span}_{\mathbb{H}} \, \{ (q^{*})^{m_{0}} q^{n_{0}}
\sigma _{1}(q^{*})^{m_{1}} q^{n_{1}}\cdots (q^{*})^{m_{k-1}} q^{n_{k-1}}
\sigma _{k}(q^{*})^{m_{k}} q^{n_{k}}:
\\
& \quad \quad \quad \quad \quad m_{0},n_{0},\dots ,m_{k},n_{k}\in
\mathbb{N}_{0}\,, \sigma _{1},\dots ,\sigma _{k}\in \Sigma ,\, k\in
\mathbb{N}_{0} \}.
\end{array}
\end{equation*}
\end{definition}

From the second equality in Definition~\ref{defihi} it follows that
$\mathbb{H}^{\infty}[q^{*},q]$ is closed under multiplication and invariant
under the involution of $C(\mathbb{H};\mathbb{H})$. Hence
$\mathbb{H}^{\infty}[q^{*},q]$ is a unital $*$-algebra over the quaternions
\cite{Schmudgen2020}.

Note that for $k\in \mathbb{N}_{0}$ the polynomial space
$\mathbb{H}^{k}[q^{*},q]$ is $*$-invariant, but it is not closed under
multiplication.
\begin{theorem}%
\label{herinfty}
\begin{align}
\label{hermhinfty}
\mathbb{H}^{\infty}[q^{*},q]_{\mathrm{her}}=\mathbb{R}[x_{0},x_{1},x_{2},x_{3}].
\end{align}
\end{theorem}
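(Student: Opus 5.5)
The plan is to prove the two inclusions separately, using the union structure $\mathbb{H}^{\infty}[q^{*},q]=\bigcup_{k}\mathbb{H}^{k}[q^{*},q]$ of Definition~\ref{defihi} together with Theorem~\ref{herhk}.

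First, since the spaces $\mathbb{H}^{k}[q^{*},q]$ increase by (\ref{hkinclusion}), the hermitian part of the union is the union of the hermitian parts:
\begin{equation*}
\mathbb{H}^{\infty}[q^{*},q]_{\mathrm{her}}=\bigcup_{k=0}^{\infty}\mathbb{H}^{k}[q^{*},q]_{\mathrm{her}}.
\end{equation*}
This holds because $f\in \mathbb{H}^{\infty}[q^{*},q]$ lies in some $\mathbb{H}^{k}[q^{*},q]$, and $f=f^{*}$ is a condition not depending on $k$. By Theorem~\ref{herhk}, each $\mathbb{H}^{k}[q^{*},q]_{\mathrm{her}}$ consists of real polynomials in $x_0,x_1,x_2,x_3$, since $x^{\alpha}$ and the elements of $\mathbb{R}[x_0,x_1^2+x_2^2+x_3^2]$ are such polynomials. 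This gives the inclusion $\subseteq$. Alternatively, one can argue directly via (\ref{EherRe}): each generator in Definition~\ref{defihi} is a polynomial expression in $q=x_0+\mathrm{i}x_1+\mathrm{j}x_2+\mathrm{k}x_3$, its conjugate and constants, so its real part is a real polynomial in the coordinates.

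For the reverse inclusion $\supseteq$, I take an arbitrary monomial $x_0^{\beta}x^{\alpha}$ with $\alpha\in\mathbb{N}_0^3$. For $k\geq |\alpha|-1$, the element $x^{\alpha}\cdot x_0^{\beta}$ lies in $x^{\alpha}\mathcal{R}$, which by Theorem~\ref{herhk} is contained in $\mathbb{H}^{k}[q^{*},q]_{\mathrm{her}}$. Hence every monomial, and therefore by real linearity every element of $\mathbb{R}[x_0,x_1,x_2,x_3]$, belongs to $\mathbb{H}^{\infty}[q^{*},q]_{\mathrm{her}}$. Here I use that a finite sum of elements from various $\mathbb{H}^{k}[q^{*},q]_{\mathrm{her}}$ lies in the one with the largest $k$.

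As a cross-check I would also give a direct argument that avoids relying on the induction in Theorem~\ref{herhk}. The coordinates $x_0,\dots,x_3$ belong to $\mathbb{H}^{0}[q^{*},q]_{\mathrm{her}}$ by (\ref{coordinates}). Since $\mathbb{H}^{\infty}[q^{*},q]$ is a $*$-algebra, Lemma~\ref{commutative} shows that its hermitian part is a commutative real algebra. A real algebra containing $1,x_0,x_1,x_2,x_3$ contains $\mathbb{R}[x_0,x_1,x_2,x_3]$.

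The main obstacle is just bookkeeping. One has to make sure that the union of hermitian parts equals the hermitian part of the union; this uses that the chain is increasing. One also has to note a typo in Theorem~\ref{herhk}, where $x_3^3$ should read $x_3^2$; this does not affect the argument, since either way those rings consist of real polynomials containing $x_0$.
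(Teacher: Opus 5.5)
Your main argument is correct and is essentially the paper's proof: the paper inserts the description of $\mathbb{H}^{k}[q^{*},q]_{\mathrm{her}}$ from Theorem~\ref{herhk} into the union $\mathbb{H}^{\infty}[q^{*},q]=\bigcup_{k}\mathbb{H}^{k}[q^{*},q]$, and you simply write out both inclusions of that step. Your cross-check is also valid and does not depend on Theorem~\ref{herhk}: for $\supseteq$ it uses $x_0,\dots,x_3\in\mathbb{H}^{0}[q^{*},q]_{\mathrm{her}}$ together with Lemma~\ref{commutative}, and for $\subseteq$ it reads off the real parts of quaternion-valued polynomial functions.
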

\begin{proof}
Inserting the description of
$\mathbb{H}^{k}[q^{*},q]_{\mathrm{her}}$ from Theorem~\ref{herhk} into
the first equality in Definition~\ref{defihi} yields (\ref{hermhinfty}).
\end{proof}
\begin{remark}
\label{rem4.14}
In \cite[Subsection 1.4]{AP21}\footnote{The authors would like to thank
the referee for pointing out this reference.} it is shown that
$\mathbb{H}^{\infty}[q^{*},q]$ is equal to
\begin{equation*}
\mathbb{R}[x_{0},x_{1},x_{2},x_{3}]+\mathbb{R}[x_{0},x_{1},x_{2},x_{3}]
\, \mathrm{i}+\mathbb{R}[x_{0},x_{1},x_{2},x_{3}]\, \mathrm{j}+
\mathbb{R}[x_{0},x_{1},x_{2},x_{3}]\, \mathrm{k}.
\end{equation*}
This implies the equality (\ref{hermhinfty}).
\end{remark}

\subsection{Adapted space property of $\mathbb{H}^{k}[q^{*},q]_{\mathrm{her}}$}
\label{sec4.4}

For the applications to the moment problem given later we need the adapted
space property.
\begin{proposition}%
\label{adaptedH_L}
Suppose that $k\in \mathbb{N}_{0}\cup \{\infty \}$. For any closed subset
$K $ of $ \mathbb{R}^{4}\cong \mathbb{H}$, the space
${\mathbb{H}^{k}[q^{*},q]_{\mathrm{her}}\lceil K}$ of restrictions of hermitian
quaternion polynomials to $K$ forms an adapted linear subspace of
$C(K; \mathbb{R})$ according to Definition~\ref{adapted}.
\end{proposition}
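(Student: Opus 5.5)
The plan is to verify the three conditions of Definition~\ref{adapted} for $E_{\mathbb{R}}:=\mathbb{H}^{k}[q^{*},q]_{\mathrm{her}}\lceil K$. The only structure needed is that, by Theorem~\ref{herhk} and Theorem~\ref{herinfty}, every one of these spaces contains the subalgebra $\mathcal{R}=\mathbb{R}[x_{0},x_{1}^{2}+x_{2}^{2}+x_{3}^{2}]$. In particular it contains the polynomial $\|q\|^{2}=x_{0}^{2}+(x_{1}^{2}+x_{2}^{2}+x_{3}^{2})$ and all of its powers $(1+\|q\|^{2})^{n}$. All the required positive elements will be built from these radial functions, so the argument is uniform in $k\in\mathbb{N}_{0}\cup\{\infty\}$. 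Restriction to $K$ causes no problem, because positivity and domination on $K$ follow from the corresponding properties on all of $\mathbb{H}$.

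Condition (ii) is immediate: the constant function $1$ lies in $\mathcal{R}$, and it is strictly positive at every point.

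For condition (i), take an arbitrary $f\in E_{\mathbb{R}}$. It is the restriction of a real polynomial $p$ in $x_{0},\dots,x_{3}$ of some degree $d$. Choose $n$ with $2n\ge d$ and set $g:=c(1+\|q\|^{2})^{n}$. The elementary bound $|x^{\beta}|\le (1+\|q\|^{2})^{n}$ for $|\beta|\le 2n$ gives $|p|\le c(1+\|q\|^{2})^{n}$ on $\mathbb{H}$ once $c$ is at least the sum of the absolute values of the coefficients of $p$. Then $g\in E_{+}$ and $g+f\in E_{+}$, and both lie in $E_{\mathbb{R}}$, since $g\in\mathcal{R}$ and $E_{\mathbb{R}}$ is a real vector space. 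Hence $f=(g+f)-g\in E_{+}-E_{+}$.

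For condition (iii), take $f\in E_{+}$ of degree $d$ and choose $n$ with $2n\ge d$ as before. Put $g:=(1+\|q\|^{2})^{n+1}\in E_{+}$. The same bound gives $f(x)/g(x)\le c\,(1+\|x\|^{2})^{-1}$, which is smaller than $\varepsilon$ outside the compact set $K_{\varepsilon}:=K\cap\{\|x\|^{2}\le c/\varepsilon\}$. So $g$ dominates $f$.

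The main obstacle is the claim that every element of $E_{\mathbb{R}}$ is polynomially bounded by a radial element of the same space. This is exactly why it matters that $\mathcal{R}$, and not merely the individual monomials, sits inside each hermitian part. Once that inclusion is taken from Theorem~\ref{herhk} (or from Theorem~\ref{realpartH_L} together with the inclusions (\ref{hkinclusion})), the remaining estimates are routine.
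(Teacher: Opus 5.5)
Your proof is correct and is essentially the paper's argument. The difference property uses the majorant $M(1+q^{*}q)^{d}$, and strict positivity uses a strictly positive element of $\mathbb{R}[x_{0},x_{1}^{2}+x_{2}^{2}+x_{3}^{2}]$. The one deviation is in domination: the paper takes $g=f\cdot(q^{*}q+1)$, which relies on the hermitian part being a module over $\mathbb{R}[x_{0},x_{1}^{2}+x_{2}^{2}+x_{3}^{2}]$, while your radial $(1+\|q\|^{2})^{n+1}$ with your growth bound works equally well and needs only the inclusion of that algebra.
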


\begin{proof}
We have to verify the three conditions in Definition~\ref{adapted} for
adapted spaces:

\text{(i) Difference property:} Let
$f \in \mathbb{H}^{k}[q^{*}, q]_{\mathrm{her}}$. Since $f$ has only finitely
many terms, there exist $M > 0$ and $d\in \mathbb{N}$ such that
$|f(q)| \leq M(1 + \|q\|^{2d})$ for all $q \in \mathbb{H}$. Define
$f_{1}(q) = f(q) + M(q^{*}q + 1)^{d}$ and
$f_{2}(q) = M(q^{*}q + 1)^{d}$. Then we have
$f_{1},f_{2} \in (\mathbb{H}^{k}[q^{*}, q]_{\mathrm{her}}\lceil _{K})_{+}$
and $f=f_{1}-f_{2}$.

\text{(ii) Strict positivity:} The polynomial $P(q) = q^{*}q + 1$ satisfies
$P(q) > 0$ for all $q \in K$.

\text{(iii) Domination:} For
$f \in (\mathbb{H}^{k}[q^{*}, q]_{\mathrm{her}}\lceil _{K})_{+}$, define
$g(q) = f(q)(q^{*}q + 1)$. Then
$g\in (\mathbb{H}^{k}[q^{*}, q]_{\mathrm{her}}\lceil _{K})_{+}$ and
$g$ dominates $f$ on $K$.
\end{proof}

\section{The moment problem for basic quaternionic polynomials}
\label{mpbasic}

It is convenient to introduce the index set
\begin{align}
{\mathsf{{N}}} :=\{(m,n): m,n\in \mathbb{N}_{0}, n\geq m\}.
\label{eq47}
\end{align}
By a \emph{quaternionic sequence} we mean a sequence
$S=\{s_{(m,n)}\}_{(m,n) \in \mathsf{{N}}}$ of quaternions
$s_{(m,n)}\in \mathbb{H}$. Recall that
\begin{equation*}
\mathcal{N}=\{(k,n,\sigma ):k,n\in \mathbb{N}_{0}, n>k,\sigma \in
\Sigma \}\cup \{(n,n,1):n\in \mathbb{N}_{0}\}.
\end{equation*}
By Theorem~\ref{propcanonicalrep}, each
$f\in \mathbb{H}^{0}[q^{*}, q]$ has a canonical representation
\begin{align}
\label{canonicalrep1}
f=\sum _{\substack{(m,n,\sigma )\in \mathcal{N}}} ~ a_{(m,n,\sigma )} (q^{*})^{m}
q^{n} \sigma ,%
\end{align}
with coefficients $a_{(m,n,\sigma )}\in \mathbb{H}$ uniquely determined
by $f$.

Let $S=\{s_{(m,n)}\}_{(m,n) \in \mathsf{{N}}}$ be a quaternionic sequence. Since
the coefficients $a_{(m,n,\sigma )}$ are uniquely determined by $f$, there
is a well-defined (!) functional $L_{S}$ on
$\mathbb{H}^{0}[q^{*}, q]$ such that
\begin{equation}
\label{defintionriesz}
L_{S}(f) := \sum _{(m,n,\sigma )\in \mathcal{N}} a_{(m,n,\sigma )}s_{(m,n)}
\sigma ,
\end{equation}
where $f$ is given by (\ref{canonicalrep1}).
\begin{definition}
\label{defn5.1}
$L_{S}$ is called the \emph{Riesz functional} associated with the quaternionic
sequence $S$.
\end{definition}

From the definition it is clear that the functional $L_{S}$ is left
$\mathbb{H}$-linear.
\begin{lemma}%
\label{twosidedls}
$L_{S}$ is a right $\mathbb{H}$-linear functional on $ \mathbb{H}^{0}[q^{*}, q]$ if and only if $s_{(n,n)}\in \mathbb{R}$ for
all $n\in \mathbb{N}_{0}$.
\end{lemma}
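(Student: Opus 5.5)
Since $L_S$ is additive and left $\mathbb{H}$-linear, and every element of $\mathbb{H}^{0}[q^{*},q]$ is a sum of terms $a(q^{*})^{m}q^{n}\sigma$ with $(m,n,\sigma)\in\mathcal{N}$ (Theorem~\ref{propcanonicalrep}), right $\mathbb{H}$-linearity reduces to a statement about basis elements. It suffices to check that
\begin{equation*}
L_S\big((q^{*})^{m}q^{n}\sigma\, b\big)=L_S\big((q^{*})^{m}q^{n}\sigma\big)\,b
\end{equation*}
for all $(m,n,\sigma)\in\mathcal{N}$ and all $b\in\mathbb{H}$. By real linearity in $b$, it is even enough to take $b=\tau\in\Sigma$.

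\emph{Case $n>m$.} Here $\sigma\tau=\pm\rho$ for some $\rho\in\Sigma$. The term $(q^{*})^{m}q^{n}\sigma\tau=\pm(q^{*})^{m}q^{n}\rho$ is then already in canonical form, since $(m,n,\rho)\in\mathcal{N}$. The definition (\ref{defintionriesz}) gives
\begin{equation*}
L_S=\pm s_{(m,n)}\rho=s_{(m,n)}\sigma\tau ,
\end{equation*}
which is exactly the required identity. So no condition arises when $n>m$.

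\emph{Case $n=m$.} Now $(q^{*})^{n}q^{n}\tau=\tau (q^{*})^{n}q^{n}$, because $(q^{*})^{n}q^{n}=\|q\|^{2n}$ is real. The canonical representation is therefore the one with left coefficient $\tau$ at $(n,n,1)$, and $L_S$ of it equals $\tau s_{(n,n)}$. On the other hand,
\begin{equation*}
L_S\big((q^{*})^{n}q^{n}\big)\tau=s_{(n,n)}\tau .
\end{equation*}
Right linearity on these elements is thus equivalent to $\tau s_{(n,n)}=s_{(n,n)}\tau$ for $\tau\in\{\mathrm{i},\mathrm{j},\mathrm{k}\}$, that is, to $s_{(n,n)}$ lying in the center of $\mathbb{H}$, which is $\mathbb{R}$.

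Combining the two cases proves both directions. If all $s_{(n,n)}$ are real, the basis identities hold, and right linearity extends by additivity and left linearity. Conversely, if $L_S$ is right linear, apply it to $(q^{*})^{n}q^{n}\cdot\mathrm{i}$ and $(q^{*})^{n}q^{n}\cdot\mathrm{j}$. This forces $s_{(n,n)}$ to commute with $\mathrm{i}$ and $\mathrm{j}$, hence to be real.

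The one point needing care is the reduction in the first paragraph. For a general element $a(q^{*})^{m}q^{n}\sigma b$, the product $\sigma b$ must be expanded into real combinations of elements of $\Sigma$, after which the left coefficient $a$ passes through by left linearity. Uniqueness of the canonical coefficients guarantees that $L_S$ is well defined on the resulting sums. I do not expect a real obstacle beyond this bookkeeping.
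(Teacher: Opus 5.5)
Your proof is correct and follows essentially the same route as the paper. Both arguments reduce to single summands $a(q^{*})^{m}q^{n}\sigma$ with $(m,n,\sigma)\in\mathcal{N}$, note that for $n>m$ right multiplication by units only moves between basis elements (up to sign), and for $n=m$ reduce the condition to $s_{(n,n)}$ commuting with all quaternions, hence being real. Your explicit tracking of the sign in $\sigma\tau=\pm\rho$ is slightly more careful than the paper's claim that $\sigma\tau\in\Sigma$.
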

\begin{proof}
Recall that the functional $L_{S}$ is right $\mathbb{H}$-linear means that
$L_{S}(f c)=L_{S}(f)c$ for all $f\in \mathbb{H}^{0}[q^{*}, q]$ and
$c\in \mathbb{H}$. Since $L_{S}$ is additive, it suffices to verify this
for a single summand $f=a (q^{*})^{m} q^{n} \sigma $, where
$a\in \mathbb{H}$, $(m,n,\sigma )\in \mathcal{N}$. We write
$c =c_{0}+c_{1}\mathrm{i}+c_{2}\mathrm{j}+ c_{3}\mathrm{k}$ with
$c_{0}, c_{1},c_{2},c_{3}\in \mathbb{R}$.

First we suppose that $m\neq n$. Then
\begin{align*}
f c =& ~ a (q^{*})^{m} q^{n} \sigma c
\\
= & ~ a c_{0} (q^{*})^{m} q^{n} \sigma + a c_{1} (q^{*})^{m} q^{n} (
\sigma \mathrm{i}) + a c_{2} (q^{*})^{m} q^{n} (\sigma \mathrm{j})+ a c_{3}
(q^{*})^{m} q^{n} (\sigma \mathrm{k}).
\end{align*}
Note that for arbitrary $\sigma , \tau \in \Sigma $, we have
$\sigma \tau \in \Sigma $. If $m\neq n$, this implies that
$(q^{*})^{m} q^{n} (\sigma \tau )$ belongs to the left module basis
$\mathcal{B}$ of $ \mathbb{H}^{0}[q^{*}, q]$. Therefore, by the definition
of $L_{S}$,
\begin{align*}
L_{S}(f c)= & ~ a c_{0} s_{(m,n)} \sigma + a c_{1}s_{(m,n)} \sigma
\mathrm{i}+ a c_{2} s_{(m,n)} \sigma \mathrm{j}+ a c_{3} s_{(m,n)}
\sigma \mathrm{k}
\\
=&~ a s_{(m,n)}\,\sigma (c_{0}+c_{1}\mathrm{i}+c_{2}\mathrm{j}+ c_{3}
\mathrm{k})=L_{S}(f) c .
\end{align*}

Now suppose that $m=n$. Then $\sigma =1$ by the definition of the index
set $\mathcal{N}$ and $fc=a(q^{*})^{n}q^{n} c=ac(q^{*})^{n}q^{n}$. Hence
$L_{S}(fc)=acs_{(n,n)}$ and $L_{S}(f)c=as_{(n,n)}c$. Therefore we have
$L_{S}(fc)=L_{S}(f)c$ for all $c\in \mathbb{H}$ if and only if
$s_{(n,n)}$ is real.
\end{proof}

Now we extend the quaternionic sequence
$S=\{s_{(m,n)}\}_{(m,n) \in \mathsf{{N}}}$ to indices
$(m,n)\in \mathbb{N}_{0}^{2}$ with $m>n$ by defining
\begin{align}
\label{mn}
s_{(m,n)}:=s_{(n.m)}^{*}\quad \text{for}\quad m>n.
\end{align}
Then, by (\ref{conjugationformula}),
\begin{equation}
\label{smnnm}
s_{(m,n)} = -\frac{1}{2}[ s_{(n,m)}+ \mathrm{i}s_{(n,m)} \mathrm{i}+
\mathrm{j}s_{(n,m)} \mathrm{j}+ \mathrm{k}s_{(n,m)} \mathrm{k}]
\quad \textrm{for}~~~ m>n.
\end{equation}
On the other hand, the conjugation formula (\ref{conjugationformula}) for
$ (q^{*})^{n} q^{m}$ yields
\begin{equation}
\label{qmstarqn}
(q^{*})^{m} q^{n} = -\frac{1}{2}[ (q^{*})^{n} q^{m} + \mathrm{i}( (q^{*})^{n}
q^{m})\mathrm{i}+ \mathrm{j}( (q^{*})^{n} q^{m})\mathrm{j}+
\mathrm{k}( (q^{*})^{n} q^{m})\mathrm{k}]
\end{equation}

Then, for all $m,n\in \mathbb{N}, m\neq n$, we have
\begin{align}
\label{smnsnmstar}
L_{S}((q^{*})^{m}q^{n})=s_{(m,n)}=s_{(n,m)}^{*}=L_{S}((q^{*})^{n}q^{m})^{*}.
\end{align}
Indeed, the first equality of (\ref{smnsnmstar}) follows from (\ref{defintionriesz}),
(\ref{smnnm}), (\ref{qmstarqn}) and the second equality follows from (\ref{mn}).
The last equality of (\ref{smnsnmstar}) follows by applying the adjoint
to the first equality.

\begin{lemma}%
\label{lsstarinvariant}
$L_{S}$ is $*$-invariant (that is, $L(f^{*})=L(f)^{*}$ for all
$f\in \mathbb{H}^{0}[q^{*}, q]$) if and only if
$s_{(n,n)}\in \mathbb{R}$ for $n\in \mathbb{N}_{0}$.
\end{lemma}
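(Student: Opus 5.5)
Both directions will be checked on the basis elements of Theorem~\ref{propcanonicalrep}. Since $L_{S}$ is additive and left $\mathbb{H}$-linear, it suffices to handle a single summand $f=a(q^{*})^{m}q^{n}\sigma$ with $(m,n,\sigma)\in\mathcal{N}$ and $a\in\mathbb{H}$. Then $f^{*}=\sigma^{*}(q^{*})^{n}q^{m}a^{*}$, and the required identity reads
\begin{equation*}
L_{S}(f^{*})=\big(a\,s_{(m,n)}\sigma\big)^{*}=\sigma^{*}s_{(m,n)}^{*}a^{*}.
\end{equation*}

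\emph{Necessity.} Take $f=(q^{*})^{n}q^{n}$, which is hermitian, so $f^{*}=f$. Then $L_{S}(f)=s_{(n,n)}$, and $*$-invariance forces $s_{(n,n)}=s_{(n,n)}^{*}$. Hence $s_{(n,n)}\in\mathbb{R}$.

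\emph{Sufficiency.} Assume every $s_{(n,n)}$ is real. By Lemma~\ref{twosidedls}, $L_{S}$ is then two-sided $\mathbb{H}$-linear, so
\begin{equation*}
L_{S}(f^{*})=\sigma^{*}\,L_{S}\big((q^{*})^{n}q^{m}\big)\,a^{*}.
\end{equation*}
It therefore remains to show $L_{S}((q^{*})^{n}q^{m})=s_{(m,n)}^{*}$, and then take into account the position of $\sigma$.

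\emph{Case $m=n$.} Here $\sigma=1$ and $L_{S}((q^{*})^{n}q^{n})=s_{(n,n)}=s_{(n,n)}^{*}$, which is what we need.

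\emph{Case $n>m$.} Here $(q^{*})^{n}q^{m}$ is not in canonical form. By (\ref{smnsnmstar}) (with indices swapped, applied through (\ref{qmstarqn}) and two-sided linearity) we get $L_{S}((q^{*})^{n}q^{m})=s_{(n,m)}=s_{(m,n)}^{*}$.

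This gives $L_{S}(f^{*})=\sigma^{*}s_{(m,n)}^{*}a^{*}=(a s_{(m,n)}\sigma)^{*}=L_{S}(f)^{*}$.

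The one point to get right is that (\ref{smnsnmstar}) is derived by expanding $(q^{*})^{n}q^{m}$ via the conjugation formula into terms $\tau(q^{*})^{m}q^{n}\tau$. Evaluating $L_{S}$ on these terms needs right linearity in the factor $\tau$, so the hypothesis $s_{(k,k)}\in\mathbb{R}$ is genuinely used there, through Lemma~\ref{twosidedls}. With two-sided linearity available, the computation is
\begin{equation*}
-\tfrac12\Big(s+\sum_{\tau\in\{\mathrm{i},\mathrm{j},\mathrm{k}\}}\tau s\tau\Big)=s^{*},
\end{equation*}
which is exactly (\ref{smnnm}). Everything else is routine bookkeeping with the canonical representation.
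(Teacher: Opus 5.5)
Your proof is correct and is essentially the paper's argument: you get necessity by testing $f=(q^{*})^{n}q^{n}$, and sufficiency by reducing to $f=a(q^{*})^{m}q^{n}\sigma$ and using two-sided linearity together with (\ref{smnsnmstar}) to obtain $L_{S}(f^{*})=\sigma^{*}s_{(m,n)}^{*}a^{*}=L_{S}(f)^{*}$. One small correction to your closing remark: for $n>m$ the elements $(q^{*})^{m}q^{n}\tau$ already lie in $\mathcal{B}$, so $L_{S}(\tau(q^{*})^{m}q^{n}\tau)=\tau s_{(m,n)}\tau$ holds directly by definition (and right linearity on such terms is unconditional, as the proof of Lemma~\ref{twosidedls} shows); the reality of the $s_{(k,k)}$ is therefore needed only in the case $m=n$, not in the $n>m$ expansion.
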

\begin{proof}
The only if assertion is clear, since $L(f^{*})=L(f)^{*}$ for
$f=(q^{*})^{n}q^{n}$ yields $s_{(n,n)}\in \mathbb{R}$.

Now we assume that $s_{(n,n)}\in \mathbb{R}$ for all
$n\in \mathbb{N}_{0}$. Then (\ref{smnsnmstar}) holds for all
$m,n\in \mathbb{N}$. To prove the $*$-invariance of $L_{S}$, as in the
proof of Lemma~\ref{twosidedls}, it suffices to take
$f=a (q^{*})^{m} q^{n} \sigma $. Then
$f^{*}=\sigma ^{*} (q^{*})^{n}q^{m} a^{*}$. Using the left and right
$\mathbb{H}$-linearity of $L_{S}$ and (\ref{smnsnmstar}) we derive
\begin{align*}
L(f^{*}) &=\sigma ^{*} L_{S}( (q^{*})^{n}q^{m}) a^{*}=\sigma ^{*}s_{(n,m)}
a^{*}
\\
&=\sigma ^{*}s_{(m,n)}^{*} a^{*}= (a s_{(m,n)}\sigma )^{*}=L_{S}(f)^{*},
\end{align*}
which proves that $L_{S}$ is $*$-invariant.
\end{proof}

From Lemma~\ref{twosidedls} and Lemma~\ref{lsstarinvariant} we obtain the
following proposition.
\begin{proposition}%
\label{Riesztwosided}
The Riesz functional $L_{S}$ of a quaternionic sequence
$S=\{s_{(m,n)}\}_{(m,n) \in \mathsf{{N}}}$ is a $*$-invariant two-sided
$\mathbb{H}$-linear functional on $\mathbb{H}^{0}[q^{*}, q]$ if and only
if the quaternion $s_{(n,n)}$ is real for all $n\in \mathbb{N}_{0}$.
\end{proposition}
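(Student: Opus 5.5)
The proposition combines Lemma~\ref{twosidedls} and Lemma~\ref{lsstarinvariant}, so the plan is to assemble these two results. We also use the observation, recorded right after Definition~\ref{defn5.1}, that $L_{S}$ is always left $\mathbb{H}$-linear. The left linearity holds because the coefficients $a_{(m,n,\sigma)}$ in the canonical representation (\ref{canonicalrep1}) are unique by Theorem~\ref{propcanonicalrep}. Hence left multiplication $f\mapsto af$ simply multiplies each coefficient on the left by $a$, and so $L_{S}(af)=aL_{S}(f)$. Additivity follows from uniqueness in the same way.

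For the ``if'' direction, assume $s_{(n,n)}\in\mathbb{R}$ for all $n\in\mathbb{N}_{0}$. Lemma~\ref{twosidedls} then gives right $\mathbb{H}$-linearity, i.e. $L_{S}(fc)=L_{S}(f)c$. Combined with left linearity, this yields
\begin{equation*}
L_{S}(afb+cgd)=aL_{S}(f)b+cL_{S}(g)d
\end{equation*}
for all $f,g\in\mathbb{H}^{0}[q^{*},q]$ and $a,b,c,d\in\mathbb{H}$, which is two-sided $\mathbb{H}$-linearity in the sense of Definition~\ref{defn3.3}. Lemma~\ref{lsstarinvariant} gives $*$-invariance under the same hypothesis. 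Its proof itself uses two-sided linearity, which is available at this point, so there is no circularity.

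For the ``only if'' direction, either property alone already forces the condition. Suppose $L_{S}$ is $*$-invariant. Applying this to $f=(q^{*})^{n}q^{n}$, which satisfies $f=f^{*}$ and $L_{S}(f)=s_{(n,n)}$, gives $s_{(n,n)}=s_{(n,n)}^{*}$, so $s_{(n,n)}\in\mathbb{R}$. Alternatively, right linearity applied to the same $f$ gives $cs_{(n,n)}=s_{(n,n)}c$ for all $c\in\mathbb{H}$, as in the proof of Lemma~\ref{twosidedls}. Since the center of $\mathbb{H}$ is $\mathbb{R}$, this again forces $s_{(n,n)}\in\mathbb{R}$.

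The main point needing care is the well-definedness and linearity bookkeeping: multiplying a basis element on the right, $(q^{*})^{m}q^{n}\sigma c$, must be re-expanded in the left basis $\mathcal{B}$. That was already handled in Lemma~\ref{twosidedls}, so in the end the proof consists of citing the two lemmas.
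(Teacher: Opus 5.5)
Your proposal is correct and follows the paper's route: the paper obtains the proposition simply by combining Lemma~\ref{twosidedls} with Lemma~\ref{lsstarinvariant}, together with the automatic left $\mathbb{H}$-linearity of $L_{S}$, and that is exactly your argument. Your extra checks are also accurate: either property alone, applied to $f=(q^{*})^{n}q^{n}$, forces $s_{(n,n)}\in\mathbb{R}$, and using right linearity inside the proof of Lemma~\ref{lsstarinvariant} is not circular.
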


\begin{definition}%
\label{momentsequence}
Let $K$ be a closed subset of $\mathbb{H}\cong \mathbb{R}^{4}$. A quaternionic
sequence $S=\{s_{(m,n)}\}_{(m,n) \in \mathsf{{N}}}$ is called a
\emph{$K$-moment sequence} if there exists a Radon measure $\mu $ on
$\mathbb{H}$ such that $\mathrm{supp}\mu \subseteq K$ and
\begin{equation}
\label{kmoments}
s_{(m,n)} = \int _{\mathbb{H}} (q^{*})^{m} q^{n}\, d\mu (q)\quad
\text{for all}~~~ (m,n) \in \mathsf{{N}.
}\end{equation}
If (\ref{kmoments}) holds, then the quaternion $s_{(m,n)} $ is called the
\emph{$(m,n)$-th moment }of the measure $\mu $ on $\mathbb{H}$ and
$\mu $ is called a \emph{representing measure} of $S$.
\end{definition}

Suppose $S$ is a $K$-moment sequence. Then, setting $m=n$, it follows from
(\ref{kmoments}) that $s_{(n,n)} \in \mathbb{R}$ for all
$n\in \mathbb{N}$. Hence, by Proposition~\ref{Riesztwosided}, the Riesz
functional $L_{S}$ is a $*$-invariant two-sided $\mathbb{H}$-linear functional
on $\mathbb{H}^{0}[q^{*}, q]$. This fact follows also easily from the next
lemma.

\begin{lemma}%
\label{Riesz}
A quaternionic sequence $S=\{s_{(m,n)}\}_{(m,n) \in \mathsf{{N}}}$ is a
$K$-moment sequence if and only if its Riesz functional $L_{S}$ is a
$K$-moment functional on $\mathbb{H}^{0}[q^{*}, q]$ according to Definition~\ref{quatmf}.
\end{lemma}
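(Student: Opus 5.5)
Both directions come down to comparing the definition of $L_S$ in (\ref{defintionriesz}) with the integral on the canonical basis $\mathcal{B}$ from Theorem~\ref{propcanonicalrep}. Throughout, a Radon measure $\mu$ on $K$ is identified with a Radon measure on $\mathbb{H}$ with $\mathrm{supp}\,\mu\subseteq K$ (extend by zero, or restrict). We also implicitly require that all polynomials in $\mathbb{H}^{0}[q^{*},q]$ are $\mu$-integrable, as Definitions~\ref{quatmf} and~\ref{momentsequence} do.

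\emph{($\Leftarrow$)} Assume $L_S$ is a $K$-moment functional with representing measure $\mu$. Apply $L_S$ to the basis element $(q^{*})^{m}q^{n}$ with $(m,n)\in\mathsf{N}$. Its canonical representation has the single coefficient $a_{(m,n,1)}=1$, so (\ref{defintionriesz}) gives $L_S((q^{*})^{m}q^{n})=s_{(m,n)}$. On the other hand, Definition~\ref{quatmf} gives $L_S((q^{*})^{m}q^{n})=\int_K (q^{*})^{m}q^{n}\,d\mu$. Comparing the two yields (\ref{kmoments}), so $S$ is a $K$-moment sequence.

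\emph{($\Rightarrow$)} Let $\mu$ be a representing measure of $S$ with $\mathrm{supp}\,\mu\subseteq K$. Define $\Lambda(f):=\int_K f\,d\mu$ for $f\in\mathbb{H}^{0}[q^{*},q]$; all polynomials are integrable by the standing assumption. The $\mathbb{H}$-valued integral is computed componentwise with real measures, so $\Lambda$ is two-sided $\mathbb{H}$-linear, and in particular left $\mathbb{H}$-linear. $L_S$ is also left $\mathbb{H}$-linear, by its definition. By Theorem~\ref{propcanonicalrep}, $\mathcal{B}$ is a left $\mathbb{H}$-module basis, so it suffices to check $L_S(b)=\Lambda(b)$ for every $b=(q^{*})^{k}q^{n}\sigma\in\mathcal{B}$. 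By (\ref{defintionriesz}), $L_S(b)=s_{(k,n)}\sigma$. By right linearity of the integral and (\ref{kmoments}), $\Lambda(b)=\big(\int (q^{*})^{k}q^{n}d\mu\big)\sigma=s_{(k,n)}\sigma$, since $(k,n)\in\mathsf{N}$ whenever $(k,n,\sigma)\in\mathcal{N}$. Hence $L_S=\Lambda$ on all of $\mathbb{H}^{0}[q^{*},q]$, so $L_S$ is a $K$-moment functional.

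The main obstacle is not computational. It is making sure the comparison is legitimate: one must use the uniqueness of the canonical representation, so that $L_S$ is well defined and determined by its values on $\mathcal{B}$. One must also use the right $\mathbb{H}$-linearity of the integral, which pulls $\sigma$ out on the right. Finally, one must observe that only indices $(k,n)$ with $n\ge k$ occur in $\mathcal{N}$, so the extension (\ref{mn}) is never needed. As a byproduct, this argument recovers that $L_S$ is two-sided $\mathbb{H}$-linear and $*$-invariant, in agreement with Proposition~\ref{Riesztwosided}.
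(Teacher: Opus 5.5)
Your proof is correct and is essentially the paper's. For $(\Leftarrow)$ the paper also just evaluates $L_S$ at $(q^{*})^{m}q^{n}$. For $(\Rightarrow)$ it expands $f$ in its canonical representation and pulls the coefficients $a_{(m,n,\sigma)}$ and the units $\sigma$ through the integral, which is your ``two left $\mathbb{H}$-linear maps agreeing on the basis $\mathcal{B}$'' argument written out. Your integrability assumption needs no separate justification: integrability of $(q^{*})^{m}q^{n}$ for $(m,n)\in\mathsf{N}$ is already part of (\ref{kmoments}), and every element of $\mathbb{H}^{0}[q^{*},q]$ is a finite left $\mathbb{H}$-combination of the functions $(q^{*})^{k}q^{n}\sigma$.
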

\begin{proof}
First suppose that $S$ is a $K$-moment sequence. Then, by Definition~\ref{momentsequence}, there is a Radon measure $\mu $ supported by
$K$ such that (\ref{kmoments}) holds. Let $f$ be as in (\ref{canonicalrep1}).
Then we derive
\begin{align*}
L_{S}(f) &= \sum _{(m,n,\sigma )\in \mathcal{N}} a_{(m,n,\sigma )}s_{(m,n)}
\sigma
\\
&= \sum _{(m,n,\sigma )\in \mathcal{N}} a_{(m,n,\sigma )} \left (
\int _{\mathbb{H}} (q^{*})^{m} q^{n}\, d\mu (q)\right ) \sigma
\\
&=\int _{\mathbb{H}} \left (\sum _{(m,n,\sigma )\in \mathcal{N}} a_{(m,n,
\sigma )} (q^{*})^{m} q^{n} \sigma \right )\, d\mu (q)=\int _{
\mathbb{H}} f(q) d\mu (q) .
\end{align*}
This shows that $L_{S}$ is indeed a $K$-moment functional.

Conversely, if $L_{S}$ is a $K$-moment functional, for
$(m,n) \in \mathsf{{N}}$ we have
\begin{align*}
s_{(m,n)}=L_{S}((q^{*})^{m} q^{n})= \int _{K} (q^{*})^{m} q^{n}\, d
\mu (q).
\end{align*}
That is, $S$ is a $K$-moment sequence.
\end{proof}

The following theorem is our main result about the quaternionic moment
problem for the basic polynomial space $\mathbb{H}^{0}[q^{*}, q]$.
\begin{theorem}%
\label{qmp1}
  Suppose that $K$ is a closed subset of
$\mathbb{H}\cong \mathbb{R}^{4}$ and let
$S=\{s_{(m,n)}\}_{(m,n) \in \mathsf{{N}}}$ be a quaternionic sequence. Then the
following are equivalent:%
\begin{enumerate}
\item[\emph{(i)}] $S$ is a $K$-moment sequence.
\item[\emph{(ii)}] $L_{S}$ is a $K$-moment functional on
$ \mathbb{H}^{0}[q^{*}, q]$.
\item[\emph{(iii)}]
\begin{equation}
L_{S}(f)\geq 0
\label{eq55}
\end{equation}
for all polynomials $f:=f_{0} +x_{1}f_{1}+x_{2}f_{2} +x_{3} f_{3}$, where
$f_{0}, f_{1},f_{2},f_{3}\in \mathbb{R}[x_{0}, x_{1}^{2}+x_{2}^{2}+x_{3}^{2}]$,
such that $f(x)\geq 0$ for all $x\in K$.
\end{enumerate}
\end{theorem}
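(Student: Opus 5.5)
The plan is to prove the cycle (i)$\iff$(ii)$\Rightarrow$(iii)$\Rightarrow$(ii). The equivalence (i)$\iff$(ii) is exactly Lemma~\ref{Riesz}, so nothing new is needed there. For (ii)$\Rightarrow$(iii), take a representing measure $\mu$ on $K$ for $L_S$. By Theorem~\ref{realpartH_L}, the polynomials $f=f_0+x_1f_1+x_2f_2+x_3f_3$ described in (iii) are precisely the elements of $\mathbb{H}^{0}[q^{*},q]_{\mathrm{her}}$. So each such $f$ lies in $\mathbb{H}^{0}[q^{*},q]$, and $L_S(f)=\int_K f\,d\mu\geq 0$ whenever $f\geq 0$ on $K$.

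The main implication is (iii)$\Rightarrow$(ii), which I will derive from the Quaternionic Choquet Theorem~\ref{quatchoquet} with $E=\mathbb{H}^{0}[q^{*},q]\lceil K$. Three inputs are needed.
\begin{enumerate}
\item[(a)] $L_S$ must be a two-sided $\mathbb{H}$-linear functional, so that Theorem~\ref{quatchoquet} and Proposition~\ref{extensionherHH} apply. By Proposition~\ref{Riesztwosided} this reduces to showing $s_{(n,n)}\in\mathbb{R}$ for all $n$.
\item[(b)] $E_{\mathrm{her}}$ must be adapted. This is Proposition~\ref{adaptedH_L} with $k=0$.
\item[(c)] $L_S$ must be $E_+$-positive. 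Since $E_+$ is exactly the set of hermitian polynomials in (\ref{herpart1}) that are nonnegative on $K$, this is literally hypothesis (iii).
\end{enumerate}

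For (a), observe that $(q^{*})^{n}q^{n}=\|q\|^{2n}$ and that $(1+\|q\|^2)^n\pm \|q\|^{2n}$ are nonnegative hermitian elements. Hypothesis (iii), applied to these polynomials, forces $L_S$ to take real values on them. From this I expect to get $s_{(n,n)}=L_S((q^{*})^nq^n)\in\mathbb{R}$, since $\|q\|^{2n}$ is itself a difference of two elements of $E_+$ and $L_S$ is real additive.

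The main obstacle is a subtlety in applying Theorem~\ref{quatchoquet}: $L_S$ is defined on polynomials on all of $\mathbb{H}$, while the theorem is stated for $E\subseteq C(K;\mathbb{H})$. If $K$ is not Zariski dense, two distinct polynomials may coincide on $K$, and then $L_S$ need not factor through restriction to $K$. I plan to settle this by positivity. If $f\in\mathbb{H}^{0}[q^{*},q]_{\mathrm{her}}$ vanishes on $K$, then $\pm f\geq 0$ on $K$, so (iii) gives $L_S(f)=0$. For a general $f$ vanishing on $K$, the components $f_0,\dots,f_3$ from the decomposition in Proposition~\ref{extensionherHH} also vanish on $K$. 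Using the left $\mathbb{H}$-linearity of $L_S$, this gives $L_S(f)=0$. Hence $L_S$ descends to a well-defined $\mathbb{H}$-linear functional on $E$.

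With (a)--(c) in place, Theorem~\ref{quatchoquet} yields a Radon measure $\mu$ on $K$ with $L_S(f)=\int_K f\,d\mu$ for all $f\in\mathbb{H}^{0}[q^{*},q]$. This is statement (ii).
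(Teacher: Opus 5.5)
Your proposal is correct and follows the paper's proof. Both use Lemma~\ref{Riesz} for (i)$\iff$(ii), and the quaternionic Choquet Theorem~\ref{quatchoquet} on $\mathbb{H}^{0}[q^{*},q]\lceil K$, together with Proposition~\ref{adaptedH_L} and Theorem~\ref{realpartH_L}, for (ii)$\iff$(iii). Your extra checks are also correct and fill in details the paper leaves implicit: that (iii) forces $s_{(n,n)}=L_S((q^{*})^{n}q^{n})\in\mathbb{R}$, which is immediate since $\|q\|^{2n}\geq 0$ itself, and that $L_S$ kills every polynomial vanishing on $K$ and so descends to restrictions.
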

\begin{proof}
The equivalence of (i) and (ii) is Lemma~\ref{Riesz}.

(ii)$\Leftrightarrow $(iii): By Proposition~\ref{adaptedH_L},
$ \mathbb{H}^{0}[q^{*}, q]_{\mathrm{her}}\lceil K$ is an adapted space
according to Definition~\ref{adapted}. Hence the quaternionic Choquet Theorem~\ref{quatchoquet} applies to the subspace
$E:= \mathbb{H}^{0}[q^{*}, q]\lceil K$ of $C(K;\mathbb{H})$. From the description
of the hermitian part given in Theorem~\ref{realpartH_L} we conclude that
(iii) is just the $E_{+}$-positivity condition of the functional
$L_{S}$. Therefore the equivalence of (ii) and (iii) follows from Theorem~\ref{quatchoquet}, (i)$\Leftrightarrow $(iii).
\end{proof}

To apply Theorem~\ref{qmp1} one has to verify condition (iii). This requires
a description of the set $E_{+}$ of basic quaternionic polynomials which
are nonnegative on the set $K$. In general, this might be a difficult task.
However, we will show that for specific sets $K$ one can determine the
set $E_{+}$ of positive elements and obtain solvability criteria for the
moment problem from Theorem~\ref{qmp1}.

First we prove an auxiliary lemma.

\begin{lemma}%
\label{auxlemma}
Let $a_{0},a_{1},\ldots ,a_{k},c\in \mathbb{R}$ such that not all
$a_{1},\ldots , a_{k}$ are zero. Set
\begin{align}
f(x) &:= a_{0}+a_{1}x_{1}+\ldots +a_{k}x_{k},
\label{fedfi}
\\
S &:=\{(x_{1},\ldots ,x_{k})\in \mathbb{R}^{k}: x_{1}^{2}+\ldots +x_{k}^{2}=c
\}.
\nonumber
\end{align}
Then the following conditions are equivalent:
\begin{enumerate}
\item[\emph{(i)}] $f(0)\geq 0$ and $f(x)\geq 0$ for all $x\in S$.
\item[\emph{(ii)}] $a_{0}\geq 0$ and
$a_{0}^{2} \geq c \, (a_{1}^{2}+\ldots +a_{k}^{2})$.
\end{enumerate}
\end{lemma}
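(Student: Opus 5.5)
The plan is to reduce everything to the minimum of the affine function $f$ over the sphere $S$. I write $a:=(a_{1},\ldots ,a_{k})\in \mathbb{R}^{k}$, so $f(x)=a_{0}+\langle a,x\rangle $, where $\langle \cdot ,\cdot \rangle $ is the Euclidean inner product and $|a|>0$ by assumption. Since $f(0)=a_{0}$, the condition $f(0)\geq 0$ in (i) is the same as $a_{0}\geq 0$ in (ii). It therefore remains to compare "$f\geq 0$ on $S$" with "$a_{0}^{2}\geq c\,|a|^{2}$", always under the standing hypothesis $a_{0}\geq 0$. I split according to the sign of $c$.

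If $c<0$, then $S=\emptyset$, so (i) reduces to $a_{0}\geq 0$. In (ii) the inequality $a_{0}^{2}\geq c|a|^{2}$ holds automatically because its right-hand side is negative. Hence both conditions say exactly $a_{0}\geq 0$.

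If $c=0$, then $S=\{0\}$ and (i) again reduces to $a_{0}\geq 0$. The second inequality in (ii) reads $a_{0}^{2}\geq 0$, which is trivial. So the two conditions agree in this case too.

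The main case is $c>0$, where $S$ is the sphere of radius $\sqrt{c}$. The key step is to compute
\begin{equation*}
\min _{x\in S} f(x)=a_{0}-\sqrt{c}\,|a| .
\end{equation*}
For the lower bound I use the Cauchy--Schwarz inequality, which gives $\langle a,x\rangle \geq -|a|\,|x|=-\sqrt{c}\,|a|$ for every $x\in S$. For attainment I take $x=-\sqrt{c}\,a/|a|$, which lies in $S$ precisely because $a\neq 0$; this is where the hypothesis that not all $a_{j}$ vanish is used.

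Consequently, $f\geq 0$ on $S$ if and only if $a_{0}\geq \sqrt{c}\,|a|$. Since $\sqrt{c}\,|a|\geq 0$, this is equivalent to the pair of conditions $a_{0}\geq 0$ and $a_{0}^{2}\geq c|a|^{2}$. Together with the equivalence $f(0)\geq 0\iff a_{0}\geq 0$, this proves (i)$\Leftrightarrow $(ii) when $c>0$.

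No step is a real obstacle. The only point that needs care is the case distinction on $c$: condition (ii) must be checked to be correct, rather than vacuous or false, when $S$ is empty or a single point.
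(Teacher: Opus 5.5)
Your proof is correct and follows the paper's argument: Cauchy--Schwarz gives the lower bound $f\geq a_{0}-\sqrt{c}\,|a|$ on $S$, an explicit extremal point on the sphere gives the converse, and the case $c<0$ (where $S=\emptyset$) is treated separately. Your witness $x=-\sqrt{c}\,a/|a|$ has the correct sign. The paper's choice $x_{j}=c^{1/2}a_{j}(a_{1}^{2}+\cdots +a_{k}^{2})^{-1/2}$ actually gives $f(x)=a_{0}+\sqrt{c}\,|a|$, so your version fixes a small slip in the published proof.
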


\begin{proof}
First, we consider the case $c\geq 0$.

Assume that the conditions in (ii) are satisfied. For
$(x_{1},\ldots ,x_{k})\in S$, we derive
\begin{align}
f(x) &\geq a_{0}-\Big|\sum _{j=1}^{k} a_{j}x_{j}\Big|\geq a_{0}-\Big(
\sum _{j=1}^{k} a_{j}^{2}\Big)^{1/2}\Big(\sum _{j=1}^{k} x_{j}^{2}
\Big)^{1/2}
\nonumber
\\
&= a_{0}-\Big(\sum _{j=1}^{k} a_{j}^{2}\Big)^{1/2}\, c^{1/2}.
\label{fx0}
\end{align}
Therefore, since $f(0)=a_{0}\geq 0$ and
$a_{0}^{2} \geq c \, (a_{1}^{2}+\ldots +a_{k}^{2})$, it follows from \eqref{fx0} that $f(x)\geq 0$.

Conversely, assume that (i) holds. Set
$x_{j}=c^{1/2}\, a_{j}(a_{1}^{2}+\ldots +a_{k}^{2})^{-1/2}$. Then
$(x_{1},\ldots ,x_{k})\in S$ and
$f(x)=a_{0}-c^{1/2} (a_{1}^{2}+\ldots +a_{k}^{2})^{1/2}$. Then
$f(0)\geq 0$ implies that $a_{0} \geq 0$ and $f(x)\geq 0$ implies that
$a_{0}^{2} \geq c \, (a_{1}^{2}+\ldots +a_{k}^{2})$.

Let us consider the case $c<0$. Then $S$ is empty, so each linear functional
$f$ is obviously nonnegative on $S$ and the condition
$a_{0}^{2} \geq c \, (a_{1}^{2}+\ldots +a_{k}^{2})$ holds. Since
$f(0)=a_{0}$, (i) is trivially equivalent to (ii). Thus the assertion holds
also when $c<0$.
\end{proof}

In the remaining part of this section we consider sets of the form
\begin{align}
\label{defK}
K_{h}=\{x=(x_{0},x_{1},x_{2},x_{3})\in \mathbb{R}^{4}: x_{1}^{2}+x_{2}^{2}+x_{3}^{2}=h(x_{0})
\},
\end{align}
where $h\in \mathbb{R}[x_{0}]$ is a fixed polynomial. We study the quaternionic
moment problem for basic quaternionic polynomials on the set $K_{h}$.

Let $\widetilde{E}$ denote the vector space of restrictions of elements
of $\mathbb{H}^{0}[q^{*},q]_{\mathrm{her}}$ to the set $K _{h}$. On the
set $K_{h}$, the expression $x_{1}^{2}+x_{2}^{2}+x_{3}^{2}$ is equal to
$h(x_{0})$. Therefore, it follows from Theorem~\ref{realpartH_L} that
$\widetilde{E}$ coincides with the restriction to $K_{h}$ of
\begin{equation*}
E:=\mathbb{R}[x_{0}]+x_{1}\mathbb{R}[x_{0}]+x_{2}\mathbb{R}[x_{0}]+x_{3}
\mathbb{R}[x_{0}].
\end{equation*}

The next lemma describes the set
\begin{equation*}
E_{+}:=\{ g\in E: g(x)\geq 0 \text{ for all } x\in K_{h}\}
\end{equation*}
of nonnegative polynomials on $K_{h}$. Define
\begin{equation*}
M_{h}:=\{ x_{0}: (x_{0},x_{1},x_{2},x_{3}) \in K_{h}
\text{ for some } x_{1},x_{2},x_{3}\in \mathbb{R}\}=h^{-1}(\mathbb{R}^{+}).
\end{equation*}

\begin{lemma}%
\label{positiveE}
Let
$g=p_{0}(x_{0})+x_{1}p_{1}(x_{0})+x_{2}p_{2}(x_{0})+x_{3}p_{3}(x_{0})
\in E$, where $p_{0},p_{1},p_{2},p_{3}\in \mathbb{R}[x_{0}]$. Then
$g\in E_{+}$ if and only if
\begin{align}
& p_{0}(x_{0}) \geq 0\quad \text{for}\quad x_{0}\in M_{h} \quad
\text{and}
\label{pestimate1}
\\
& p_{0}(x_{0})^{2}\geq h(x_{0})(p_{1}(x_{0})^{2}+p_{2}(x_{0})^{2}+p_{3}(x_{0})^{2})
\quad \text{for}\quad x_{0}\in M_{h}.
\label{pestimate2}
\end{align}
\end{lemma}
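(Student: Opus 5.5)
The plan is to reduce the statement to Lemma~\ref{auxlemma} by slicing $K_h$ along the $x_0$-coordinate. First I would fix $x_0\in\mathbb{R}$ and look at the fibre
\begin{equation*}
S_{x_0}:=\{(x_1,x_2,x_3)\in\mathbb{R}^3:\ x_1^2+x_2^2+x_3^2=h(x_0)\},
\end{equation*}
so that $K_h=\{(x_0,x'): x'\in S_{x_0}\}$. This fibre is nonempty exactly when $x_0\in M_h$, i.e.\ $h(x_0)\ge 0$. On the fibre over a fixed $x_0$ the polynomial $g$ is the affine function
\begin{equation*}
x'\mapsto a_0+a_1x_1+a_2x_2+a_3x_3,\qquad a_j:=p_j(x_0).
\end{equation*}
It follows that $g\in E_+$ if and only if, for every $x_0\in M_h$, this affine function is nonnegative on the sphere $S_{x_0}$ of radius $h(x_0)^{1/2}$.

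Next I would fix $x_0\in M_h$ and apply Lemma~\ref{auxlemma} with $k=3$ and $c=h(x_0)\ge 0$. Two cases arise.

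If $(p_1,p_2,p_3)(x_0)\neq 0$, the lemma requires the extra hypothesis $f(0)\ge 0$, where $f(0)=p_0(x_0)$, so I cannot invoke it directly. Instead I would argue nonnegativity on the sphere by hand, which is simply the Cauchy--Schwarz argument from the lemma's proof. The minimum of the affine function over the sphere is
\begin{equation*}
p_0(x_0)-h(x_0)^{1/2}\bigl(p_1(x_0)^2+p_2(x_0)^2+p_3(x_0)^2\bigr)^{1/2}.
\end{equation*}
This is $\ge 0$ if and only if $p_0(x_0)\ge 0$ and $p_0(x_0)^2\ge h(x_0)\sum_j p_j(x_0)^2$. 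Here $p_0(x_0)\ge 0$ is forced, since the subtracted term is nonnegative.

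If $(p_1,p_2,p_3)(x_0)=0$, the function is the constant $p_0(x_0)$. It is nonnegative on the nonempty fibre if and only if $p_0(x_0)\ge 0$, and in this case \eqref{pestimate2} holds trivially.

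Taking both directions over all $x_0\in M_h$ then gives exactly \eqref{pestimate1} and \eqref{pestimate2}.

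The only delicate point is this mismatch with Lemma~\ref{auxlemma}. Its condition (i) includes $f(0)\ge 0$, which is not part of nonnegativity on $K_h$ itself, so the lemma is not literally applicable. The fix is to observe that when $c\ge 0$ and the fibre is a genuine sphere, nonnegativity on $S$ alone already forces $a_0\ge 0$. To see this, evaluate at the minimizing point $x_j=-c^{1/2}a_j/\|a\|$, where the value is $a_0-c^{1/2}\|a\|$. Since $c^{1/2}\|a\|\ge 0$, nonnegativity there gives $a_0\ge c^{1/2}\|a\|\ge 0$. One can therefore either reprove the needed one-line minimum computation or note that $f(0)\ge 0$ is automatic here. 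I would also be careful to restrict all conditions to $x_0\in M_h$: for $x_0$ outside $M_h$ the fibre is empty and imposes no constraint.
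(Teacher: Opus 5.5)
Your argument is correct and is essentially the paper's: both fix $x_0\in M_h$, view $g$ on the fibre as an affine function on the sphere $x_1^2+x_2^2+x_3^2=h(x_0)$, and use the Cauchy--Schwarz minimum behind Lemma~\ref{auxlemma}. The only differences are two. First, the paper obtains $p_0(x_0)\ge 0$ from the antipodal identity $g(x_0,x')+g(x_0,-x')=2p_0(x_0)$, so that Lemma~\ref{auxlemma} applies verbatim, whereas you read it off the minimum value $p_0(x_0)-h(x_0)^{1/2}\|(p_1,p_2,p_3)(x_0)\|$ via the correctly signed minimizer. Second, you also handle the case $(p_1,p_2,p_3)(x_0)=0$, which Lemma~\ref{auxlemma} formally excludes and the paper passes over.
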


\begin{proof}
First suppose that $g\in E_{+}$. Let
$x=(x_{0},x_{1},x_{2},x_{3})\in K_{h}$. Then
$(x_{0},-x_{1},-x_{2},\allowbreak -x_{3})\in K_{h}$ and
\begin{align}
\label{fprel}
g(x_{0},x_{1},x_{2},x_{3})+g(x_{0},-x_{1},-x_{2},-x_{3})=2p_{0}(x_{0}).
\end{align}
Hence $p_{0}(x_{0})\geq 0$. Define $f$ by \eqref{fedfi}, with
$a_{0}=p_{0}(x_{0})$, $a_{1}=p_{1}(x_{0})$, $a_{2}=p_{2}(x_{0})$,
$a_{3}=p_{3}(x_{0})$, $c=h(x_{0})$ and $k=3$. It follows from Lemma~\ref{auxlemma}, (i)$\Rightarrow $(ii), that \eqref{pestimate2} holds.

Conversely, assume \eqref{pestimate1} and \eqref{pestimate2}. Then
$p_{0}(x_{0})=a_{0}\geq 0$ by \eqref{fprel} and \eqref{pestimate1} and
therefore $g\in E_{+}$ by Lemma~\ref{auxlemma}, (ii)$\Rightarrow $(i).
\end{proof}

Combining Theorem~\ref{qmp1} and Lemma~\ref{positiveE} gives the following
solvability criterion for the $K_{h}$-moment problem.

\begin{theorem}%
\label{mpK_h}
Suppose that $L$ is an $\mathbb{H}$-linear functional on
$\mathbb{H}^{0}[q^{*},q]$. Let $h\in \mathbb{R}[x_{0}]$ and let
$K_{h}$ be the real algebraic set defined by \eqref{defK}. Then $L$ is
a $K_{h}$-moment functional if and only if
\begin{align}
L(g)\geq 0
\label{eq62}
\end{align}
for all elements $g:=p_{0}+x_{1}p_{1}+x_{2}p_{2}+x_{3}p_{3}\in E$, where
$p_{0},p_{1},p_{2},p_{3}\in \mathbb{R}[x_{0}]$ satisfy \eqref{pestimate1} and \eqref{pestimate2}.
\end{theorem}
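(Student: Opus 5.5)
The plan is to combine the quaternionic Choquet theorem with the description of the hermitian part, exactly as in the proof of Theorem~\ref{qmp1}, and then replace the abstract positivity cone by the concrete cone given in Lemma~\ref{positiveE}. We work with the space $E':=\mathbb{H}^{0}[q^{*},q]\lceil K_{h}$ of restrictions to the closed set $K_{h}$. This set is closed as the zero set of the polynomial $x_{1}^{2}+x_{2}^{2}+x_{3}^{2}-h(x_{0})$. The space $E'$ is a $*$-invariant two-sided $\mathbb{H}$-linear subspace of $C(K_{h};\mathbb{H})$. By Proposition~\ref{adaptedH_L}, its hermitian part $\mathbb{H}^{0}[q^{*},q]_{\mathrm{her}}\lceil K_{h}$ is adapted.

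\textbf{Passing to $E'$.} One point must be handled first: $L$ is given on $\mathbb{H}^{0}[q^{*},q]$, not on $E'$. For the direction ``moment functional $\Rightarrow$ positivity'' there is nothing to do. If $L(f)=\int_{K_{h}} f\,d\mu$ and $g\in E$ satisfies \eqref{pestimate1} and \eqref{pestimate2}, then Lemma~\ref{positiveE} gives $g\ge 0$ on $K_{h}$. By Theorem~\ref{realpartH_L}, $g$ agrees on $K_{h}$ with some element $\tilde g\in\mathbb{H}^{0}[q^{*},q]_{\mathrm{her}}$, obtained by substituting $h(x_{0})$ for powers of $x_1^2+x_2^2+x_3^2$ in the reverse direction. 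Here $L(g)$ is to be read as $L(\tilde g)$, since $g$ and $\tilde g$ are identified in the restricted space. Hence $L(g)=\int g\,d\mu\ge 0$.

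For the converse, we first show that $L$ vanishes on every $f\in\mathbb{H}^{0}[q^{*},q]$ with $f\lceil K_{h}=0$, so that $L$ descends to $E'$. By the decomposition $f=f_0+\mathrm{i}f_1+\mathrm{j}f_2+\mathrm{k}f_3$ used in Proposition~\ref{extensionherHH}, with each $f_j$ hermitian and also vanishing on $K_{h}$, it suffices to treat hermitian $f$. For such $f$, both $f$ and $-f$ restrict to elements of $E$ that are nonnegative on $K_h$. Lemma~\ref{positiveE} shows that these restrictions satisfy \eqref{pestimate1} and \eqref{pestimate2}, so the hypothesis gives $L(f)\ge0$ and $L(-f)\ge0$, hence $L(f)=0$.

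\textbf{Applying Choquet.} With $L$ now a well-defined $\mathbb{H}$-linear functional on $E'$, the hermitian part of $E'$ is the restriction of $E$ to $K_h$ (the discussion before Lemma~\ref{positiveE}). Its positive cone $E'_+$ is, by Lemma~\ref{positiveE}, exactly the set of restrictions of those $g=p_0+x_1p_1+x_2p_2+x_3p_3$ satisfying \eqref{pestimate1} and \eqref{pestimate2}. The hypothesis $L(g)\ge0$ is therefore precisely $E'_+$-positivity. Theorem~\ref{quatchoquet}, (i)$\Rightarrow$(iii), then yields a positive Radon measure $\mu$ on $K_h$ representing $L$ on $E'$, and hence on $\mathbb{H}^{0}[q^{*},q]$.

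\textbf{Main obstacle.} The delicate step is the well-definedness/descent argument, that is, interpreting $L(g)$ for $g\in E$, which is not literally an element of $\mathbb{H}^{0}[q^{*},q]$. The remaining steps are direct invocations of Proposition~\ref{adaptedH_L}, Theorem~\ref{realpartH_L}, Lemma~\ref{positiveE} and Theorem~\ref{quatchoquet}.
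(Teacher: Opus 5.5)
\textbf{Gap: the descent step is circular.} You do not need to reinterpret $L(g)$. Since $\mathbb{R}[x_0]\subseteq\mathbb{R}[x_0,x_1^2+x_2^2+x_3^2]$, Theorem~\ref{realpartH_L} gives $E\subseteq\mathbb{H}^{0}[q^{*},q]_{\mathrm{her}}$, so $L(g)$ is literally defined. Your forward direction is therefore fine with $\tilde g=g$. The difficulty runs the other way. A hermitian $f$ vanishing on $K_h$, such as $f=x_1^2+x_2^2+x_3^2-h(x_0)$, is not in $E$. The element of $E$ obtained from it by substituting $h(x_0)$ for $x_1^2+x_2^2+x_3^2$ is $g=0$. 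So the hypothesis only gives $L(0)\ge 0$. Your conclusion ``$L(f)\ge0$ and $L(-f)\ge0$'' silently uses $L(f)=L(g)$, which is exactly the descent you are trying to prove.

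\textbf{The step cannot be repaired under the literal hypothesis.} The hypothesis constrains $L$ only on $E$, and for $L$ defined on $\mathbb{H}^{0}[q^{*},q]$ the statement is false. Take $h\equiv1$ and $s:=x_1^2+x_2^2+x_3^2$.
\begin{itemize}
\item Each $F\in\mathbb{H}^{0}[q^{*},q]_{\mathrm{her}}$ is uniquely $F=P_0(x_0,s)+\sum_{j=1}^{3}x_jP_j(x_0,s)$. The summands have different parities in $x_1,x_2,x_3$, and $x_0,s$ are algebraically independent.
\item Set $\ell(F):=F(\mathrm{i})+\partial_2P_0(0,1)$, where $\partial_2$ is the derivative in the second variable.
\item Put $L(f_0+\mathrm{i}f_1+\mathrm{j}f_2+\mathrm{k}f_3):=\ell(f_0)+\mathrm{i}\ell(f_1)+\mathrm{j}\ell(f_2)+\mathrm{k}\ell(f_3)$. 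This is two-sided $\mathbb{H}$-linear because the $f_j$ are real-valued.
\item For $g\in E$ the correction term vanishes, so $L(g)=g(\mathrm{i})\ge0$ whenever $g\ge0$ on $K_h$, since $\mathrm{i}\in K_h$. Thus $L$ satisfies the hypothesis of the theorem.
\item However $L(s-1)=0+1=1$, while every measure supported on $K_h$ integrates $s-1$ to $0$. So $L$ is not a $K_h$-moment functional.
\end{itemize}

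\textbf{What the theorem actually requires.} It is true only if $L$ is understood as a functional on the restricted space $\mathbb{H}^{0}[q^{*},q]\lceil K_h$, equivalently if $L$ annihilates the polynomials vanishing on $K_h$. That is the paper's tacit reading. Its proof is just Theorem~\ref{qmp1} combined with Lemma~\ref{positiveE}, with $\widetilde E$ identified with $E\lceil K_h$.

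Under that reading, your Choquet step is exactly the paper's argument. Replace the descent paragraph by this standing assumption. Alternatively, your $\pm f$ trick does give descent if you assume positivity on all hermitian $f\ge0$ on $K_h$, as in Theorem~\ref{qmp1}(iii). That is a strictly stronger hypothesis than the one in the statement.
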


Let us mention a few interesting special cases of Theorem~\ref{mpK_h}:
\begin{enumerate}
\item[1.)] $h(x_{0})=1$.%

Then $K_{h}$ is a cylinder in $\mathbb{R}^{4}$ with the unit sphere of
$\mathbb{R}^{3}$ as base set.%

In this case, we have $M_{h}=\mathbb{R}$ and Lemma~\ref{positiveE} reads
\begin{equation*}
E_{+}= \{ p_{0}+x_{1}p_{1}+x_{2}p_{2}+x_{3}p_{3}\in E \ : \: p_{0}(x_{0})^{2}
\ge p_{1}(x_{0})^{2}+p_{2}(x_{0})^{2}+p_{3}(x_{0})^{2}\}.
\end{equation*}
In the other cases, even if it is easy to find $M_{h}$, but Lemma~\ref{positiveE} does not give such simple description of $E_{+}$.
\item[2.)] $h(x_{0})=1-x_{0}^{2}$.

Then $K_{h}$ is the unit sphere of $\mathbb{R}^{4}$.
\item[3.)] $h(x_{0})=1+x_{0}^{2}$.%

Then $K_{h}$ is a hyperboloid in $\mathbb{R}^{4}$.
\item[4.)] $h(x_{0})=x_{0}-x_{0}^{2}$.
\item[5.)] $h(x_{0})=x_{0}+x_{0}^{2}$.%

In cases 4.) and 5.), $K_{h}$ is a parabolic set in $\mathbb{R}^{4}$.
\end{enumerate}

\section{The quaternionic moment problem for polynomial spaces $ \mathbb{H}^{k}[q^{*}, q]$}
\label{mphk}

To formulate a version of Haviland's theorem for quaternions it is convenient
to introduce some more notions.

Let $K$ be a closed subset of $\mathbb{H}\cong \mathbb{R}^{4}$. We define
\begin{align*}
E^{k}(K)_{+}:=\Big\{f\in \sum _{\alpha \in \mathbb{N}_{0}^{3}, |
\alpha |\leq k+1} x^{\alpha}\, \mathbb{R}[x_{0},x_{1}^{2}+x_{2}^{2}+x_{3}^{3}]:
f(x)\geq 0~~ \textrm{for}~~ x\in K \Big\}
\end{align*}
for $k\in \mathbb{N}_{0}$ and
\begin{align*}
E^{\infty}(K)_{+}:= \big\{ f\in \mathbb{R}[x_{0},x_{1},x_{2},x_{3}]: f(x)
\geq 0~~ \textrm{for}~~ x\in K\, \big\}.
\end{align*}

\begin{definition}%
\label{kpositivity}
Let $k\in \mathbb{N}_{0}\cup \{\infty \}$. An $\mathbb{H}$-linear functional
$L: H^{k}[q^{*}, q] \to \mathbb{H}$ is called
\begin{itemize}
\item[$\bullet $] \emph{$K$-positive} if $L(f)\geq 0$ for all
$f\in E^{k}(K)_{+}$.

\item[$\bullet $] \emph{weakly $K$-positive} if for each
$f\in E^{k}(K)_{+}$ there exists an $h\in E^{k}(K)_{+}$ such that
$L(f+\varepsilon h) \geq 0$  for all $\varepsilon >0$.
\end{itemize}
\end{definition}

Clearly, if $L$ is $K$-positive, then $L$ is also weakly $K$-positive.
Note that the weak $K$-positivity is not a standard notion. We use it in
order to have a convenient formulation of the following theorem.

The following result can be considered as a quaternionic version of Haviland's
theorem, see  \cite{Haviland1936}.
\begin{theorem}
\label{thm:quaternion_haviland}
Suppose that $k\in \mathbb{N}_{0}\cup \{\infty \}$ and $K$ is a closed
subset of $\mathbb{H}\cong \mathbb{R}^{4}$. Let
$L: \mathbb{H}^{k}[q^{*}, q] \to \mathbb{H}$ be an $\mathbb{H}$-linear
functional. Then the following statements are equivalent:
\begin{enumerate}
\item[\emph{(i)}] The functional $L$ is $K$-positive.
\item[\emph{(ii)}] The functional $L$ is weakly $K$-positive.
\item[\emph{(iii)}] $L$ is a $K$-moment functional on
$\mathbb{H}^{k}[q^{*}, q]$, that is, there exists a Radon measure
$\mu $ on $\mathbb{H}$ such that $\mathrm{supp}\, \mu \subseteq K$ and
\begin{equation*}
L(f) = \int _{\mathbb{H}}\, f(q) d\mu (q)\quad \text{for all} ~~~ f
\in \mathbb{H}^{k}[q^{*}, q].
\end{equation*}
\end{enumerate}%
\end{theorem}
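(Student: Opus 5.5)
The plan is to reduce the statement to the Quaternionic Choquet Theorem~\ref{quatchoquet}, applied to the space $E:=\mathbb{H}^{k}[q^{*},q]\lceil K$ of restrictions to $K$. This space is a $*$-invariant two-sided $\mathbb{H}$-linear subspace of $C(K;\mathbb{H})$, since $\mathbb{H}^{k}[q^{*},q]$ is such a subspace of $C(\mathbb{H};\mathbb{H})$ and restriction commutes with scalar multiplication and with the involution. Its hermitian part is the restriction to $K$ of $\mathbb{H}^{k}[q^{*},q]_{\mathrm{her}}$: if a restriction is real-valued on $K$, we replace the polynomial $f$ by $\mathrm{Re} f$, which has the same restriction. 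By Theorem~\ref{herhk} for $k\in\mathbb{N}_0$, and by Theorem~\ref{herinfty} for $k=\infty$, this hermitian part is exactly the space of polynomials occurring in the definition of $E^{k}(K)_{+}$. Consequently the cone $E_{+}$ of Theorem~\ref{quatchoquet} consists precisely of the restrictions to $K$ of elements of $E^{k}(K)_{+}$. Proposition~\ref{adaptedH_L} shows that $E_{\mathrm{her}}$ is adapted.

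One technical point has to be settled first: $L$ is defined on $\mathbb{H}^{k}[q^{*},q]$ and not on the restrictions. To transfer it we need $L(f)=0$ whenever $f$ vanishes on $K$, under hypothesis (i) or (ii). If $f|_K=0$, then $f_0,\dots,f_3$ from Lemma~\ref{frep} vanish on $K$, because they are built pointwise from $f$ and $f^*$. For a hermitian $g$ vanishing on $K$, both $g$ and $-g$ lie in $E^{k}(K)_{+}$, so (i) gives $L(g)\ge0$ and $L(-g)\ge0$, hence $L(g)=0$. Then $\mathbb{H}$-linearity yields $L(f)=L(f_0)+\mathrm{i}L(f_1)+\dots=0$. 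Under (ii) we instead obtain $L(g)+\varepsilon L(h)\ge0$ for all $\varepsilon>0$, and letting $\varepsilon\to0$ gives $L(g)\ge0$; the same argument applies to $-g$. So $L$ induces an $\mathbb{H}$-linear functional $\tilde L$ on $E$.

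With this in place, (i)$\Rightarrow$(ii) is trivial (take $h=0$). For (ii)$\Rightarrow$(i), fix $f\in E^k(K)_+$ and let $\varepsilon\to0$ in $L(f)+\varepsilon L(h)\ge0$. This requires that $L(f)$ and $L(h)$ are real. For that we observe that the hypothesis applied to $f$ and to $2f$ forces $\varepsilon L(h)$ to be real for small $\varepsilon$ along with $L(f)$; more simply, this is exactly the equivalence (i)$\iff$(ii) of Theorem~\ref{quatchoquet}, which is already stated for $\tilde L$. For (i)$\Rightarrow$(iii), Theorem~\ref{quatchoquet} (i)$\Rightarrow$(iii) applied to $\tilde L$ produces a Radon measure $\mu$ on $K$ with $\tilde L(f)=\int_K f\,d\mu$ on $E$. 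We extend $\mu$ by zero to $\mathbb{H}$, so $\mathrm{supp}\,\mu\subseteq K$ since $K$ is closed, and then $L(f)=\int_{\mathbb{H}}f\,d\mu$ for all $f\in\mathbb{H}^k[q^*,q]$. For (iii)$\Rightarrow$(i), every $f\in E^k(K)_+$ is hermitian and nonnegative on $\mathrm{supp}\,\mu$, and it is $\mu$-integrable because $L(f)$ is defined by the integral; hence $L(f)=\int f\,d\mu\ge0$.

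The main obstacle is the bookkeeping in the passage from $L$ to $\tilde L$: showing that $L$ vanishes on polynomials that vanish on $K$, and checking that the hermitian part of the restricted space is the restriction of the hermitian part, so that Theorem~\ref{quatchoquet} and Proposition~\ref{adaptedH_L} apply verbatim. Everything else is a direct invocation of those results.
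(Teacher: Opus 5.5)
Your proposal is correct and follows essentially the paper's route: apply the Quaternionic Choquet Theorem~\ref{quatchoquet} to $E=\mathbb{H}^{k}[q^{*},q]\lceil K$, with Proposition~\ref{adaptedH_L} giving adaptedness and Theorems~\ref{herhk} and \ref{herinfty} identifying $E_{+}$ with the restrictions of $E^{k}(K)_{+}$. Your extra check that $L$ vanishes on polynomials that vanish on $K$, so that it descends to the restrictions, fills in a detail the paper leaves implicit; note that under (ii) the realness of $L(f)$ and $L(h)$ follows by comparing two values of $\varepsilon$ for the same $h$, not by applying the hypothesis to $f$ and $2f$.
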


\begin{proof}
By Proposition~\ref{adaptedH_L}, the space
${\mathbb{H}^{k}[q^{*},q]_{\mathrm{her}}\lceil K}$ forms an adapted linear
subspace of $C(K; \mathbb{R})$ according to Definition~\ref{adapted}. Therefore,
the quaternionic Choquet Theorem~\ref{quatchoquet} applies to
$E:=\mathbb{H}^{k}[q^{*}, q] \lceil K$. From Theorems~\ref{herhk} and \ref{herinfty} it follows that the set $E^{k}(K)_{+}$ defined above is
just the set of elements of
${\mathbb{H}^{k}[q^{*},q]_{\mathrm{her}}\lceil K}$ that are non-negative
on $K$. Therefore, because of Definition~\ref{kpositivity}, Theorem~\ref{quatchoquet} gives the equivalence of conditions (i)--(iii).
\end{proof}%
\begin{remark}
\label{rem6.3}
For $k\in \mathbb{N}_{0}$, Theorem~\ref{herhk} implies that
$x_{1}\in \mathbb{H}^{k}[q^{*}, q]_{\mathrm{her}}$ and
$x_{1}^{k+2}\notin \mathbb{H}^{k}[q^{*}, q]_{\mathrm{her}}$. Hence the
real vector space $ \mathbb{H}^{k}[q^{*}, q]_{\mathrm{her}}$ is \textit{not}
an algebra and it is difficult to describe the hermitian elements that
are positive on $K$. In order to apply methods of real algebraic geometry
a finitely generated real algebra is needed. However, note that the hermitian
part $ \mathbb{H}^{k}[q^{*}, q]_{\mathrm{her}}$ is a module for the algebra
$\mathbb{R}[x_{0},x_{1}^{2}+x_{2}^{2}+x_{3}^{2}]$.
\end{remark}

\section{The quaternionic moment problem for $ \mathbb{H}^{\infty}[q^{*}, q]$}
\label{mpinfty}

In this section we consider the moment problem for the full polynomial
algebra $ \mathbb{H}^{\infty}[q^{*}, q]$. In this case, by Theorem~\ref{herinfty}, the hermitian part
$\mathbb{H}^{\infty}[q^{*},q]_{\mathrm{her}}$ is the real polynomial algebra
$\mathbb{R}[x_{0},x_{1}, x_{2},x_{3}]$. Therefore, using Theorem~\ref{thm:quaternion_haviland} in the case $k=\infty $, methods and results
from real algebra (see e.g., \cite{Marshall2008},
\cite{Scheiderer2024}, \cite[Chapter 12]{Schmudgen2017}) can be applied.

Roughly speaking, each version of the Archimedean Positivstellensatz for
the polynomial algebra $\mathbb{R}[x_{0},x_{1},x_{2},x_{3}]$ provides a
solvability criterion for the moment problem. Let $C$ be a cone of
$\mathbb{R}[x_{0},x_{1},x_{2},x_{3}]$ and $K$ a subset of
$\mathbb{H}\cong \mathbb{R}^{4}$ for which the assertion of the Archimedean
Positivstellensatz holds. This means that each polynomial
$f \in \mathbb{R}[x_{0},x_{1},x_{2},x_{3}]$, such that $f(x)>0$ for all
$x\in K$, belongs to $C$. Suppose that
$L:\mathbb{H}^{\infty}[q^{*}, q]\to \mathbb{H}$ is an $\mathbb{H}$-linear
functional such that $L(f)\geq 0$ for all $f\in C$. Then $L$ is weakly
$K$-positive according to Definition~\ref{kpositivity}. Indeed, setting
$h=1$ in Definition~\ref{kpositivity}, we have
$f(q)+\varepsilon h(q)>0$ for all $f\in E^{\infty}(K)_{+}$,
$\varepsilon >0$ and $q\in K$. Then, $f+\varepsilon h\in C$ by the Archimedean
Positivstellensatz and hence $L(f+\varepsilon h)\geq 0$.

There are several Archimedean Positivstellens\"atze for quadratic modules,
semirings and preorderings (see \cite{Marshall2008},
\cite{Schmudgen2017}, \cite{Scheiderer2024}) and each of these results
yields a solvability criterion for the classical moment problem. In fact,
all such results about the moment problem for
$\mathbb{R}[x_{0},x_{1},x_{2},x_{3}]$ developed in
\cite[Chapter 12]{Schmudgen2017} can be reformulated for the quaternionic
moment problem on the polynomial space
$\mathbb{H}^{\infty}[q^{*}, q]$. We do not carry out this and restrict
ourselves to the pioneering theorem for compact semi-algebraic sets proved
in \cite{Schmudgen1991}.

Assume that $\mathsf{{f}}$ is a $k$-tuple of polynomials
$f_{1},\dots ,f_{k}\in \mathbb{R}[x_{0},x_{1},x_{2},x_{3}]$. Then
\begin{align}
\label{defsemialgebraicset}
K({\mathsf{f}}):=\{x\in \mathbb{R}^{4}: f_{1}(x)\geq 0,\dots ,f_{k}(x)
\geq 0\}
\end{align}
is called the \emph{basic closed semi-algebraic set} associated with
$\mathsf{f}$. Further, the set $T({\mathsf{f}})$ of all finite sums of terms
$ f_{1}^{e_{1}}\cdots f_{k}^{e_{k}}g^{2}$, where
$ e_{1},\dots ,e_{k}\in \{0,1\}$ and
$g\in \mathbb{R}[x_{0},x_{1},x_{2},x_{3}]$, is called the
\emph{preordering} generated by $\mathsf{f}$.
\begin{theorem}%
\label{mpppreordering}
Suppose that the set $K({\mathsf{f}})$ is compact. Let
$L:\mathbb{H}^{\infty}[q^{*}, q]\to \mathbb{H}$ be an $\mathbb{H}$-linear
functional. Then $L$ is a $K({\mathsf{f}})$-moment functional if and only if
\begin{align}
\label{preorder}
L( f_{1}^{e_{1}}\cdots f_{k}^{e_{k}}g^{2})\geq 0
\end{align}
for all $ e_{1},\dots ,e_{k}\in \{0,1\}$ and
$g\in \mathbb{R}[x_{0},x_{1},x_{2},x_{3}]$.
\end{theorem}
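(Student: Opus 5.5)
The plan is to reduce the statement to the quaternionic Haviland theorem (Theorem~\ref{thm:quaternion_haviland}) with $k=\infty$, combined with Schm\"udgen's Archimedean Positivstellensatz from \cite{Schmudgen1991}. That Positivstellensatz says: if $K(\mathsf{f})$ is compact, then every $p\in\mathbb{R}[x_0,x_1,x_2,x_3]$ with $p(x)>0$ for all $x\in K(\mathsf{f})$ belongs to the preordering $T(\mathsf{f})$. By Theorem~\ref{herinfty}, the hermitian part $\mathbb{H}^{\infty}[q^*,q]_{\mathrm{her}}$ is exactly $\mathbb{R}[x_0,x_1,x_2,x_3]$. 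Hence every element of $T(\mathsf{f})$ lies in $\mathbb{H}^{\infty}[q^*,q]$, so the expressions in (\ref{preorder}) are meaningful.

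For necessity, assume $L$ is a $K(\mathsf{f})$-moment functional with representing measure $\mu$ supported in $K(\mathsf{f})$. Each $f_1^{e_1}\cdots f_k^{e_k}g^2$ is a real polynomial that is nonnegative on $K(\mathsf{f})$. Integrating it against the positive measure $\mu$ therefore gives a nonnegative number, which is (\ref{preorder}).

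For sufficiency, assume (\ref{preorder}) holds.
\begin{itemize}
\item[(a)] Every element of $T(\mathsf{f})$ is a finite sum of the generators $f_1^{e_1}\cdots f_k^{e_k}g^2$, and $L$ is real-additive. Hence $L(p)\ge 0$ for all $p\in T(\mathsf{f})$. Here ``$\ge 0$'' includes the statement that $L(p)$ is real, and a finite sum of nonnegative reals is again a nonnegative real.
\item[(b)] We verify weak $K(\mathsf{f})$-positivity in the sense of Definition~\ref{kpositivity}, following the argument sketched at the start of Section~\ref{mpinfty}. Take $p\in E^{\infty}(K(\mathsf{f}))_+$ and choose $h=1$. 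Since $1=1^2\in T(\mathsf{f})$, we have $h\in E^\infty(K(\mathsf{f}))_+$. For every $\varepsilon>0$ the polynomial $p+\varepsilon$ is strictly positive on the compact set $K(\mathsf{f})$. Schm\"udgen's theorem then gives $p+\varepsilon\in T(\mathsf{f})$, and by (a) we get $L(p+\varepsilon h)\ge 0$.
\item[(c)] Theorem~\ref{thm:quaternion_haviland}, (ii)$\Rightarrow$(iii), for $k=\infty$ and the closed set $K=K(\mathsf{f})$, now yields a Radon measure $\mu$ with $\mathrm{supp}\,\mu\subseteq K(\mathsf{f})$ representing $L$ on all of $\mathbb{H}^{\infty}[q^*,q]$.
\end{itemize}

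The main point needing care is that Theorem~\ref{thm:quaternion_haviland} requires the weak positivity condition in its exact form, including that $L$ takes real values on these elements. Step (a) gives exactly this, and compactness is needed only to invoke Schm\"udgen's Positivstellensatz. There is also a subtlety about being a moment functional on $K$ versus on $\mathbb{H}$. It is handled by Theorem~\ref{quatchoquet} applied to the restrictions to $K(\mathsf{f})$: one extends $\mu$ by zero outside $K(\mathsf{f})$, and Proposition~\ref{adaptedH_L} supplies the adaptedness that this requires.
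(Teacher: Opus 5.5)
Your proposal is correct and follows the paper's route. The paper's proof likewise reads (\ref{preorder}) as nonnegativity of $L$ on the preordering $T(\mathsf{f})$, and combines Schm\"udgen's Positivstellensatz with Theorem~\ref{thm:quaternion_haviland} for $k=\infty$ via the weak $K$-positivity argument with $h=1$ sketched at the start of Section~\ref{mpinfty}; your write-up just makes the necessity direction and the reality of $L$ on $T(\mathsf{f})$ explicit.
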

\begin{proof}
Condition (\ref{preorder}) means that $L$ is nonnegative on the preordering
$T({\mathsf{f}})$ Hence the assertion follows from Theorem~\ref{thm:quaternion_haviland} combined with
\cite[Theorem 1]{Schmudgen1991} or
\cite[Theorem 12.25]{Schmudgen2017}.
\end{proof}

Let us state condition (\ref{preorder}) in the case $k=2$. Then (\ref{preorder})
means that
\begin{align*}
L(g^{2})\geq 0,~~ L(f_{1}g^{2})\geq 0, ~ ~ L(f_{2}g^{2})\geq 0,~~~~
\text{and}~~ L(f_{1}f_{2}g^{2})\geq 0
\end{align*}
for all $g\in \mathbb{R}[x_{0},x_{1},x_{2},x_{3}]$. We illustrate this
by a simple example.
\begin{example}
\label{exmp7.2}
Let $k=2$ and $f_{1}=1-q^{*}q$, $f_{2}=q^{*} \mathrm{i}-\mathrm{i}q$. Then
an $\mathbb{H}$-linear functional
$L:H^{\infty}[q^{*}, q]\to \mathbb{H}$ is a $K({\mathsf{f}})$-moment functional
if and only if
\begin{align}
\label{threeconditions}
L(g^{2})\geq 0,& ~~ L((1-q^{*}q)g^{2})\geq 0, ~ ~ L((q^{*} \mathrm{i}-
\mathrm{i}q) g^{2})\geq 0,
\\
& L((1-q^{*}q)(q^{*} \mathrm{i}-\mathrm{i}q)g^{2})\geq 0
\label{fourthcondition}
\end{align}
for all $g\in \mathbb{R}[x_{0},x_{1},x_{2},x_{3}]$. Note that in this particular
case, it turns out that the condition (\ref{fourthcondition}) can be omitted,
that is, $L$ is a $K({\mathsf{f}})$-moment functional if and only if the three
conditions in (\ref{threeconditions}) are satisfied.
\end{example}

The solvability condition (\ref{preorder}) can be nicely reformulated in
terms of \textit{localized Hankel matrices}. For this we recall the standard
multi-index notation
\begin{align*}
x^{\alpha }= x_{0}^{\alpha _{0}} x_{1}^{\alpha _{1}}\, x_{2}^{\alpha _{2}}
\, x_{3}^{\alpha _{3}}\quad \textrm{for}\, ~~~ \alpha =( \alpha _{0},
\alpha _{1},\alpha _{2},\alpha _{3})\in \mathbb{N}_{0}^{4}, ~ x_{j}^{0}:=1.
\end{align*}

Suppose that $L: \mathbb{H}^{\infty}[q^{*}, q] \to \mathbb{H}$ is an
$\mathbb{H}$-linear functional. Let
$f=\sum _{\alpha }f_{\alpha }x^{\alpha}$ be a polynomial of
$ \mathbb{R}[x_{0},x_{1},x_{2},x_{3}]$. For
$\alpha \in \mathbb{N}_{0}^{4}$, we set
$s_{\alpha }= L(x^{\alpha}) $. The matrix $H(fL)$ with entries
\begin{equation}
H(fL)_{\alpha ,\beta}:=\sum \nolimits _{\gamma }\, f_{\gamma }s_{
\alpha + \beta +\gamma},\quad \alpha , \beta \in \mathbb{N}_{0}^{4},
\label{eq67}
\end{equation}
is called the localized Hankel matrix of $L$ at $f$. For
$ g=\sum \nolimits _{\alpha }g_{\alpha }x^{\alpha}\in \mathbb{R}[x_{0},x_{1},x_{2},x_{3}]$
we compute
\begin{align}
\label{hgl}
L(fg^{2})=\sum \nolimits _{\alpha , \beta} H(fL)_{\alpha ,\beta}\, g_{
\alpha }g_{\beta}.
\end{align}
Therefore, comparing (\ref{preorder}) and (\ref{hgl}) we conclude (see
also \cite[Proposition 12.14]{Schmudgen2017}) that condition (\ref{preorder})
holds if and only if the localized Hankel matrices
$H(f_{1}^{e_{1}}\cdots f_{k}^{e_{k}}L)$ are positive semi-definite for
$e_{1},\dots ,e_{k}\in \{0,1\}$. Thus, Theorem~\ref{mpppreordering} yields
the following corollary.
\begin{corollary}
\label{cor7.3}
Suppose that the semi-algebraic set $K({\mathsf{f}})$ defined by (\ref{defsemialgebraicset})
is compact. Then an $\mathbb{H}$-linear functional
$L:\mathbb{H}^{\infty}[q^{*}, q]\to \mathbb{H}$ is a $K({\mathsf{f}})$-moment
functional if and only if the localized Hankel matrices
$H(f_{1}^{e_{1}}\cdots f_{k}^{e_{k}}L)$ are positive semi-definite for
all $ e_{1},\dots ,e_{k}\in \{0,1\}$.
\end{corollary}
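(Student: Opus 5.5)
The plan is to derive Corollary~\ref{cor7.3} directly from Theorem~\ref{mpppreordering}. The only extra ingredient is the identity (\ref{hgl}), which turns positivity of $L$ on the generators of the preordering into positive semi-definiteness of localized Hankel matrices.

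The first step is to check that the matrices are real, so that positive semi-definiteness makes sense. Each $x^{\alpha}$ belongs to $\mathbb{H}^{\infty}[q^{*},q]_{\mathrm{her}}=\mathbb{R}[x_{0},x_{1},x_{2},x_{3}]$ by Theorem~\ref{herinfty}, but an arbitrary $\mathbb{H}$-linear functional may take non-real values there. I will therefore read ``positive semi-definite'' as including the requirement that the finite sections $\bigl(H(fL)_{\alpha,\beta}\bigr)_{|\alpha|,|\beta|\le n}$ are real symmetric matrices with nonnegative quadratic form. This matches the reading of (\ref{preorder}), where ``$\geq 0$'' already forces real values. With this convention, (\ref{preorder}) for a fixed product $F=f_{1}^{e_{1}}\cdots f_{k}^{e_{k}}$ and all $g$ is equivalent to $H(FL)\succeq 0$.

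\emph{Forward direction.} Suppose the preordering condition holds. Then (\ref{hgl}) gives $\sum_{\alpha,\beta}H(FL)_{\alpha,\beta}g_{\alpha}g_{\beta}\ge 0$ for every finitely supported real vector $(g_{\alpha})$. To get reality and symmetry, I apply this to $g=x^{\alpha}$, to $g=x^{\alpha}\pm x^{\beta}$, and to $g=x^{\alpha}+t x^{\beta}$ for real $t$. Since each value $L(Fg^{2})$ is real, the entries $H(FL)_{\alpha,\alpha}$ and the sums $H_{\alpha,\beta}+H_{\beta,\alpha}$ are real. Moreover $H_{\alpha,\beta}=H_{\beta,\alpha}$ holds automatically, because $s_{\alpha+\beta+\gamma}$ depends only on $\alpha+\beta$. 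Hence every entry is real and each section is a real symmetric matrix whose quadratic form is nonnegative.

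\emph{Reverse direction.} Assume each $H(FL)$ is positive semi-definite. Then for any $g$, (\ref{hgl}) shows that $L(Fg^{2})$ equals a nonnegative real quadratic form evaluated at the coefficient vector of $g$, which is exactly (\ref{preorder}). To justify (\ref{hgl}) itself, I expand $Fg^{2}=\sum_{\alpha,\beta,\gamma}F_{\gamma}g_{\alpha}g_{\beta}x^{\alpha+\beta+\gamma}$. Since the coefficients are real and hence commute with quaternions, the $\mathbb{H}$-linearity of $L$ applies term by term.

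Finally, I invoke Theorem~\ref{mpppreordering}, which applies because $K(\mathsf{f})$ is compact: $L$ is a $K(\mathsf{f})$-moment functional if and only if (\ref{preorder}) holds for all $e_{1},\dots,e_{k}\in\{0,1\}$. Combining this with the equivalence above proves the corollary. The only delicate point is the reality of the entries $s_{\alpha}$; the polarization argument handles it, so I expect no real obstacle.
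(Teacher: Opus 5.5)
Your proposal is correct and follows the paper's route: the paper likewise combines Theorem~\ref{mpppreordering} with the identity (\ref{hgl}) to turn the preordering condition (\ref{preorder}) into positive semi-definiteness of the localized Hankel matrices. Your polarization argument, which shows that the entries $L(Fx^{\alpha+\beta})$ are real so that positive semi-definiteness makes sense for an $\mathbb{H}$-valued $L$, spells out a point the paper leaves implicit.
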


\section*{Acknowledgments}
The first author is partially supported by the Arab Fund Foundation Fellowship Program under the Distinguished Scholar Award \# 1074. We would like to thank Leipzig University for hosting this research.

\subsection*{Data availability}
No data was used for the research described in the article.


\begin{thebibliography}{99}

\bibitem{Akhiezer1965} N.I. Akhiezer, \emph{The Classical Moment Problem and Some Related Questions in Analysis}, Hafner Publishing Co., New York, 1965.

\bibitem{AP21} G. Alon and E. Parad, A quaternionic nullstellensatz, J. Pure Appl. Algebra \textbf{225}(2021), Nr. 106572.

\bibitem{Choquet1969} G. Choquet, \emph{Lectures on Analysis}, W.A. Benjamin, New York, 1969.

\bibitem{CGK} F. Colombo, G, J. Gantner and D. Kimsey, \emph{Spectral Theory on the $S$-Spectrum for Quaternionic Operators}, Operator Theory: Advances and Applications \textbf{270}, Birkh\"auser, 2018.

\bibitem{GSS} G. Gentili, C. Stoppato and D.C. Struppa, \emph{Regular Functions of a Quaternion Variable}, Springer Monographs in Mathematics, 2013.

\bibitem{Hamilton1843} W.R. Hamilton, On quaternions, or on a new system of imaginaries in algebra, The London, Edinburgh and Dublin Philosophical Magazine and Journal of Science \textbf{25} (1843), 489--495.

\bibitem{Haviland1936} E.K. Haviland, On the moment problem for distribution functions in more than one dimension II, Amer. J. Math. \textbf{58} (1936), 164--168.

\bibitem{Marshall2008} M. Marshall, \emph{Positive Polynomials and Sums of Squares}, Amer. Math. Soc., Providence, R.I., 2008.

\bibitem{rodman2014} L. Rodman, \emph{Topics in Quaternion Linear Algebra}, Princeton University Press, 2014.

\bibitem{Scheiderer2024} C. Scheiderer, \emph{A Course in Real Algebraic Geometry}, Graduate Texts in Mathematics \textbf{303}, Springer, Cham, 2024.

\bibitem{Schmudgen1991} K. Schm\"{u}dgen, The $K$-moment problem for compact semi-algebraic sets, Math. Ann. \textbf{289} (1991), 203--206.

\bibitem{Schmudgen2017} K. Schm\"{u}dgen, \emph{The Moment Problem}, Graduate Texts in Mathematics \textbf{277}, Springer, Cham, 2017.

\bibitem{Schmudgen2020} K. Schm\"{u}dgen, \emph{An Invitation to Unbounded $*$-Representations of $*$-Algebras on Hilbert Space}, Graduate Texts in Mathematics \textbf{285}, Springer, Cham, 2020.

\bibitem{Stieltjes1894} T.J. Stieltjes, Recherches sur les fractions continues, Ann. Fac. Sci. Toulouse Math. \textbf{8} (1894), 1--122.

\bibitem{vasi22} Vasilescu, F., Functions and operators in real, quaternionic and Cliffordian context, Complex Anal. Oper. Theory \textbf{16}(2022), No. 8, 117.
\end{thebibliography}
\end{document}